\title{\textbf{Using a Grassmann graph to recover\\the underlying projective geometry}}
\author[]{Ian Seong}
\date{}
\def\FF{\mathbb{F}}
\def\Stab{{\rm{Stab}}}
\def\Fix{{\rm{Fix}}}
\newtheoremstyle{dotless}{}{}{\itshape}{}{\bfseries}{}{ }{}
\theoremstyle{dotless}
\newtheorem{theorem}{Theorem}[section]
\newtheorem{corollary}[theorem]{Corollary}
\newtheorem{lemma}[theorem]{Lemma}
\newtheoremstyle{dotlessdef}{}{}{}{}{\bfseries}{}{ }{}
\theoremstyle{dotlessdef}
\newtheorem{definition}[theorem]{Definition}
\newtheorem{problem}[theorem]{Problem}
\begin{document}

\maketitle
\begin{abstract}
    Let $n,k$ denote integers with $n>2k\geq 6$. Let $\mathbb{F}_q$ denote a finite field with $q$ elements, and let $V$ denote a vector space over $\mathbb{F}_q$ that has dimension $n$. The projective geometry $P_q(n)$ is the partially ordered set consisting of the subspaces of $V$; the partial order is given by inclusion. For the Grassmann graph $J_q(n,k)$ the vertex set consists of the $k$-dimensional subspaces of $V$. Two vertices of $J_q(n,k)$ are adjacent whenever their intersection has dimension $k-1$. The graph $J_q(n,k)$ is known to be distance-regular. Let $\partial$ denote the path-length distance function of $J_q(n,k)$. Pick two vertices $x,y$ in $J_q(n,k)$ such that $1<\partial(x,y)<k$. The set $P_q(n)$ contains the elements $x,y,x\cap y,x+y$. In our main result, we describe $x\cap y$ and $x+y$ using only the graph structure of $J_q(n,k)$. To achieve this result, we make heavy use of the Euclidean representation of $J_q(n,k)$ that corresponds to the second largest eigenvalue of the adjacency matrix.
    \\ \\
    \textbf{Keywords.} Distance-regular graph; Grassmann graph; projective geometry; Euclidean representation.\\
    \textbf{2020 Mathematics Subject Classification.} Primary: 05E30.
\end{abstract}

\section{Introduction}
\label{intro}
In the topic of algebraic graph theory, there is a family of graphs said to be distance-regular \cite{BCN}. These graphs have certain parameters called the intersection numbers \cite[p.~126]{BCN}. The following question has been of interest for many years: is a given distance-regular graph uniquely determined up to isomorphism by its intersection numbers? This question has been resolved for several families of distance-regular graphs. The hypercubes \cite{egawa}, the half-cubes \cite[p.~195]{BCN}, the Hermitian forms graphs \cite{kite, ivanov}, and the odd graphs \cite{Moon} are all uniquely determined up to isomorphism by their intersection numbers. The Hamming graphs \cite{egawa}, the Johnson graphs \cite{chang, johnson}, and the bilinear forms graphs \cite{huang, cuypers, bimetsch, GK2} are uniquely determined up to isomorphism by their intersection numbers with a few exceptional cases. There are many other families of distance-regular graphs for which the question still remains open. One such family consists of the Grassmann graphs.

Here are some partial results on the uniqueness problem for the Grassmann graphs. Let $q$ denote a prime power and let $n,k$ denote integers with $n\geq 2k\geq 6$. In \cite{Metsch} Metsch showed that the Grassmann graph $J_q(n,k)$ is uniquely determined up to isomorphism by its intersection numbers with the following unresolved exceptions:
\begin{itemize}    
    \item $n=2k$ for $q\geq 2$;

    \item $n=2k+1$ for $q\geq 2$;

    \item $n=2k+2$ for $q\in \{2,3\}$;

    \item $n=2k+3$ for $q=2$.
\end{itemize}
In \cite{vDK} van Dam and Koolen constructed a family of distance-regular graphs, called the twisted Grassmann graphs $\overline{J}_q(2k+1,k)$; the graph $\overline{J}_q(2k+1,k)$ has the same intersection numbers as the Grassmann graph $J_q(2k+1,k)$, but the two graphs are not isomorphic. In \cite{GK} Gavrilyuk and Koolen showed that the Grassmann graph $J_q(2k,k)$ is uniquely determined up to isomorphism by its intersection numbers, provided that $k$ is sufficiently large. The lower bound on $k$ is given in \cite[p.~2]{GK}. 

This paper is about the relationship between the Grassmann graph $J_q(n,k)$ and its underlying projective geometry $P_q(n)$, and how to potentially recover $P_q(n)$ from the intersection numbers of $J_q(n,k)$. The geometry $P_q(n)$ is defined as follows. Let $\mathbb{F}_q$ denote a finite field with $q$ elements, and let $V$ denote a vector space over $\mathbb{F}_q$ that has dimension $n$. Let the set $P_q(n)$ consist of all the subspaces of $V$. Turn $P_q(n)$ into a partially ordered set with partial order given by inclusion.  The graph $J_q(n,k)$ is defined as follows. The vertex set $X$ of $J_q(n,k)$ consists of the $k$-dimensional subspaces of $V$. Two vertices of $J_q(n,k)$ are adjacent whenever their intersection has dimension $k-1$. Pick distinct $x,y\in X$. The geometry $P_q(n)$ contains the elements $x,y,x\cap y,x+y$. The main goal of the paper is to describe $x\cap y$ and $x+y$ using only the graph structure of $J_q(n,k)$.

We now summarize the main results of the paper. For the rest of this section, assume that $n>2k\geq 6$. We will use the notation $[m]=(q^m-1)/(q-1)$ for an integer $m$. Let $P_1$ denote the subset of $P_q(n)$ consisting of the elements that have dimension $1$. The cardinality of $P_1$ is $[n]$. We introduce a Euclidean space $E$ that has dimension $[n]-1$ and inner product $\left<\;,\;\right>$. We represent the elements of $P_1$ as vectors in $E$ as follows. For $s\in P_1$, we define the vector $\widehat{s}\in E$ such that the following (i)--(iv) are satisfied:
\begin{enumerate}[label=(\roman*)]
    \item $E=\text{Span}\bigl\{\widehat{s}\mid s\in P_1\bigr\}$;
    
    \item for $s\in P_1$, $\bigl\Vert\widehat{s}\bigr\Vert^2=[n]-1$;
    
    \item for distinct $s,t\in P_1$, $\Bigl<\widehat{s},\widehat{t}\Bigr>=-1$;
    
    \item $\mathlarger{\sum_{s\in P_1}\widehat{s}=0}$.
\end{enumerate}
Next, we represent the elements of $P_q(n)$ as vectors in $E$ as follows. For $u\in P_q(n)$ we define $\widehat{u}\in E$ as 
\begin{equation*}
    \widehat{u}=\sum_{\substack{s\in P_1\\ s\subseteq u}}\widehat{s}.
\end{equation*}

We show that for $u,v\in P_q(n)$, 
\begin{equation*}
    \bigl<\widehat{u},\widehat{v}\bigr> = [n][h]-[i][j],
\end{equation*}
where $i=\dim u$, $j=\dim v$, $h=\dim (u\cap v)$. Let $\partial$ denote the path-length distance function of $J_q(n,k)$. We show that for $x,y\in X$,
\begin{equation*}
    \bigl<\widehat{x},\widehat{y}\bigr>=[n][k-i]-[k]^2,
\end{equation*}
where $i=\partial(x,y)$. For $x\in X$, let $\Gamma(x)$ denote the set of vertices in $J_q(n,k)$ that are adjacent to $x$. We show that   
\begin{equation*}
    \sum_{z\in \Gamma(x)}\widehat{z}=\theta_1\widehat{x}
\end{equation*}
where $\theta_1=q^2[k-1][n-k-1]-1$. The scalar $\theta_1$ is the second largest eigenvalue of the adjacency matrix of $J_q(n,k)$. We show that the vectors $\bigl\{\widehat{x}\mid x\in X\bigr\}$ span $E$. Using the above facts, we show that the vectors $\bigl\{\widehat{x}\mid x\in X\bigr\}$ give a Euclidean representation of $J_q(n,k)$; see Definition \ref{Euclidean} below. Pick $x,y\in X$ such that $1<\partial(x,y)<k$. Using $x$ and $y$ we construct two vectors $B_{xy}, C_{xy}\in E$ as follows. Define the sets
\begin{equation*}
    \begin{aligned}
        \mathcal{B}_{xy}&=\{z\in \Gamma(x)\mid \partial(y,z)=\partial(x,y)+1\},\\
    \mathcal{C}_{xy}&=\{z\in \Gamma(x)\mid \partial(y,z)=\partial(x,y)-1\}.
    \end{aligned}
\end{equation*}
Define the vectors
\begin{equation*}
    B_{xy}=\sum_{z\in \mathcal{B}_{xy}}\widehat{z},\qquad \qquad C_{xy}=\sum_{z\in \mathcal{C}_{xy}}\widehat{z}.
\end{equation*}
We show that the vectors
\begin{equation}
\label{intro1}
    \widehat{x},\qquad \widehat{y},\qquad \widehat{x\cap y},\qquad \widehat{x+y}
\end{equation}
and the vectors  
\begin{equation}
\label{intro2}
    \widehat{x},\qquad \widehat{y},\qquad B_{xy},\qquad C_{xy}
\end{equation}
are both bases for the same subspace of $E$. We find all the inner products between
\begin{enumerate}[label=(\roman*)]
    \item pairs of vectors in (\ref{intro1});

    \item pairs of vectors in (\ref{intro2});

    \item a vector in (\ref{intro1}) and a vector in (\ref{intro2}).
\end{enumerate}
Using these inner products, we obtain the transition matrices between the bases (\ref{intro1}) and (\ref{intro2}). This yields
    \begin{equation*}
        \begin{aligned}
            \widehat{x\cap y}&=\frac{[k-i][n-k-1]}{q^{k-1}[n-2k]}\widehat{x}+\frac{[k-i]}{q^{k-i+1}[i-1][n-2k]}\widehat{y}+\frac{-1}{q^{k+i}[n-2k]}B_{xy}+\frac{-[k-i]}{q^k[i-1][n-2k]}C_{xy},\\
            \widehat{x+y}&=\frac{-[k-1][n-k-i]}{q^{k-i-1}[n-2k]}\widehat{x}+\frac{-[n-k-i]}{q^{k-2i+1}[i-1][n-2k]}\widehat{y}+\frac{1}{q^k[n-2k]}B_{xy}+\frac{[n-k-i]}{q^{k-i}[i-1][n-2k]}C_{xy},
        \end{aligned}
    \end{equation*}
    where $i=\partial(x,y)$. The above equations are our main result. Near the end of the paper, we obtain some variations on the main result using some relatives of $B_{xy}, C_{xy}$. We also have some more comments about the uniqueness problem for $J_q(n,k)$.

    The paper is organized as follows. In Sections 2--3, we present some basic facts about the Grassmann graph $J_q(n,k)$ and the projective geometry $P_q(n)$. In Sections 4--6, we represent the elements of $P_q(n)$ as vectors in the Euclidean space $E$. In Sections 7--8, we discuss the subspace of $E$ spanned by the basis (\ref{intro1}). In Section 9, we construct the vectors $B_{xy}$ and $C_{xy}$. In Section 10, we compute the inner products. In Section 11, we prove that the vectors in (\ref{intro2}) form a basis for the subspace spanned by the basis (\ref{intro1}). We give the transition matrices between the bases (\ref{intro1}) and (\ref{intro2}). In Sections 12--13, we discuss some relatives of $B_{xy}$ and $C_{xy}$. In Section 14, we give some more comments about the uniqueness problem for $J_q(n,k)$. Section 15 is an appendix that contains some linear algebra facts that are used throughout the paper.

\section{The Grassmann graph $\Gamma$}
\label{prelim}
Let $\Gamma=(X,\mathcal{E})$ denote a finite undirected graph that is connected, without loops or multiple edges, with vertex set $X$, edge set $\mathcal{E}$, and path-length distance function $\partial$. Two vertices are said to be adjacent whenever they form an edge. The diameter $d$ of $\Gamma$ is defined as $d=\max\{\partial(x,y)\mid x,y\in X\}$. For $x\in X$, define the set $\Gamma(x)=\{y\in X\mid \partial(x,y)=1\}$. 
We say that $\Gamma$ is \textit{regular with valency $\kappa$} whenever $|\Gamma(x)|=\kappa$ for all $x\in X$. 
We say that $\Gamma$ is \textit{distance-regular} whenever for all non-negative integers $h,i,j$ and all $x,y\in X$ such that $\partial(x,y)=h$, the cardinality of the set $\{z\in X\mid\partial(x,z)=i, \partial(y,z)=j\}$ depends only on $h,i,j$. This cardinality is denoted by $p_{i,j}^{h}$. If $\Gamma$ is distance-regular, then $\Gamma$ is regular with valency $\kappa=p^{0}_{1,1}$. For the rest of this section, we assume that $\Gamma$ is distance-regular with diameter $d\geq 3$.

Define 
\begin{equation*}
        b_i=p^{i}_{1,i+1}\; \;  (0\leq i<d),\qquad \qquad a_i=p^{i}_{1,i} \; \; (0\leq i\leq d),\qquad \qquad c_i=p^{i}_{1,i-1}\; \; (0<i\leq d).
\end{equation*}
Note that $b_0=\kappa$, $a_0=0$, $c_1=1$. Also note that  
\begin{equation}
\label{valency}
    b_i+a_i+c_i=\kappa\qquad (0\leq i\leq d),
\end{equation}
where $c_0=0$ and $b_d=0$.

We call $b_i$, $a_i$, $c_i$ the {\it intersection numbers of $\Gamma$}.

By the {\it eigenvalues of $\Gamma$} we mean the
eigenvalues of the adjacency matrix of $\Gamma$. Since $\Gamma$ is distance-regular, by \cite[p.~128]{BCN}, $\Gamma$ has $d+1$ eigenvalues; we denote these eigenvalues by
\begin{equation}
\nonumber
    \theta_0>\theta_1>\cdots>\theta_d.
\end{equation} By \cite[p.~129]{BCN}, $\theta_0=\kappa$.

This paper is about a class of distance-regular graphs
called the Grassmann graphs. These graphs are
defined as follows. Let $\mathbb{F}_q$ denote a finite field with $q$ elements, and let $n,k$ denote positive integers such that $n> k$. Let $V$ denote an $n$-dimensional vector space over $\FF_{q}$. The Grassmann graph $J_q(n,k)$ has vertex set $X$ consisting of the $k$-dimensional subspaces of $V$. Vertices $x,y$ of $J_q(n,k)$ are adjacent whenever $x \cap y$ has dimension $k-1$.

According to \cite[p.~268]{BCN}, the graphs $J_q(n,k)$ and $J_q(n,n-k)$ are isomorphic. Without loss of generality, we may assume $n\geq 2k$. Under this assumption, the diameter of $J_q(n,k)$ is equal to $k$. (See \cite[Theorem~9.3.3]{BCN}.) The case $n=2k$ is somewhat special, so throughout this paper we assume that $n>2k$. 

For the rest of this paper, we assume that $\Gamma$ is the Grassmann graph $J_q(n,k)$ with $k\geq 3$.

In what follows, we will use the notation 
\begin{equation}
\label{notation}
    [m]=\frac{q^{m}-1}{q-1} \qquad \qquad (m\in \mathbb{Z}).
\end{equation} 

By \cite[Theorem~9.3.2]{BCN}, the valency of $\Gamma$ is 
\begin{equation}
\label{kappa}
  \kappa=q[k][n-k].  
\end{equation}

By \cite[Theorem~9.3.3]{BCN}, the intersection numbers of $\Gamma$ are
\begin{equation}
    \label{sizebc}
    b_i=q^{2i+1}[k-i][n-k-i],\qquad \qquad c_i=[i]^2\qquad \qquad (0\leq i\leq k).
\end{equation}

By {\rm{\cite[Theorem~9.3.3]{BCN}}}, the eigenvalues of $\Gamma$ are
 \begin{equation}
 \label{eigenvalues}
     \theta_i=q^{i+1}[k-i][n-k-i]-[i] \qquad \qquad (0\leq i\leq k).
 \end{equation}

 The given ordering of the eigenvalues is known to be $Q$-polynomial in the sense of \cite[p.~135]{BCN}.

\section{The projective geometry $P_q(n)$}
To study the graph $\Gamma$, it is helpful to view its vertex set $X$ as a subset of a certain poset $P_q(n)$, which is defined as follows. 
\begin{definition}
    Let the poset $P_q(n)$ consist of the subspaces of $V$, together with the partial order given by inclusion. This poset $P_q(n)$ is called the \emph{projective geometry}.
\end{definition}
For the rest of the paper, we abbreviate $P=P_q(n)$. In this section we present some lemmas about the poset $P$. 

\begin{lemma}{\rm{\cite[p.~47]{Axler}}}
    \label{modularity}
    For $u,v\in P$ we have 
    \begin{equation*}
        \dim u+\dim v=\dim (u\cap v)+\dim(u+v).
    \end{equation*}
\end{lemma}

\begin{lemma}
    \label{linin}
    Let $u,v\in P$. Let the subset $\mathcal{R}\subseteq V$ form a basis for $u\cap v$. Extend the basis $\mathcal{R}$ to a basis $\mathcal{R}\cup \mathcal{S}$ for $u$, and extend the basis $\mathcal{R}$ to a basis $\mathcal{R}\cup \mathcal{T}$ for $v$. Then $\mathcal{R}\cup \mathcal{S}\cup \mathcal{T}$ forms a basis for the subspace $u+v$.
\end{lemma}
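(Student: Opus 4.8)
The plan is to argue in two stages: first that $\mathcal{R}\cup\mathcal{S}\cup\mathcal{T}$ spans $u+v$, and then that it is linearly independent. For the spanning claim, note that $\mathcal{R}\cup\mathcal{S}$ spans $u$ and $\mathcal{R}\cup\mathcal{T}$ spans $v$, so every vector of $u$ and every vector of $v$ lies in the span of $\mathcal{R}\cup\mathcal{S}\cup\mathcal{T}$; since $u+v$ is by definition the set of sums $u'+v'$ with $u'\in u$, $v'\in v$, it follows immediately that $u+v\subseteq \mathrm{Span}(\mathcal{R}\cup\mathcal{S}\cup\mathcal{T})$. The reverse inclusion is clear because $\mathcal{R},\mathcal{S}\subseteq u\subseteq u+v$ and $\mathcal{T}\subseteq v\subseteq u+v$.

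For linear independence, I would take a vanishing linear combination and separate it according to which of the three sets each vector comes from, writing it as $r + s + t = 0$ where $r\in \mathrm{Span}\,\mathcal{R}$, $s\in\mathrm{Span}\,\mathcal{S}$, $t\in\mathrm{Span}\,\mathcal{T}$. Then $t = -(r+s)\in u$, and also $t\in v$, so $t\in u\cap v = \mathrm{Span}\,\mathcal{R}$. But $t\in\mathrm{Span}\,\mathcal{T}$ and $\mathcal{R}\cup\mathcal{T}$ is a basis of $v$, hence linearly independent, which forces $t=0$ and all the $\mathcal{T}$-coefficients to vanish. The relation then reads $r+s=0$ with $\mathcal{R}\cup\mathcal{S}$ a basis of $u$, so all remaining coefficients vanish as well. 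Hence $\mathcal{R}\cup\mathcal{S}\cup\mathcal{T}$ is a linearly independent spanning set, i.e.\ a basis, for $u+v$.

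As a sanity check one can count: $|\mathcal{R}\cup\mathcal{S}\cup\mathcal{T}| = \dim u + \dim v - \dim(u\cap v)$, which matches $\dim(u+v)$ by Lemma \ref{modularity}; this also confirms that $\mathcal{R}$, $\mathcal{S}$, $\mathcal{T}$ are pairwise disjoint, a point worth noting explicitly since the statement implicitly treats them as disjoint. I do not anticipate a genuine obstacle here — the only mild subtlety is being careful that the three sets are disjoint so that "separating the linear combination by which set each vector belongs to" is well defined, and this follows from $\mathcal{R}\cup\mathcal{S}$ and $\mathcal{R}\cup\mathcal{T}$ each being bases (so $\mathcal{S}\cap\mathcal{R}=\varnothing$ and $\mathcal{T}\cap\mathcal{R}=\varnothing$) together with the independence argument above, which in effect re-derives $\mathcal{S}\cap\mathcal{T}=\varnothing$.
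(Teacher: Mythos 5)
Your proof is correct, but it takes a different route from the paper's at the key step. The paper disposes of the lemma in two lines: after the same spanning observation, it invokes Lemma \ref{modularity} to get $\dim(u+v)=\dim u+\dim v-\dim(u\cap v)=\vert\mathcal{R}\vert+\vert\mathcal{S}\vert+\vert\mathcal{T}\vert$, so the spanning set has cardinality equal to the dimension and is therefore a basis. What you present as a closing ``sanity check'' is in fact the paper's entire argument. Your main argument instead proves linear independence directly, by splitting a vanishing relation as $r+s+t=0$ and pushing $t$ into $u\cap v=\mathrm{Span}\,\mathcal{R}$; this is sound (the key point that $\mathrm{Span}\,\mathcal{R}\cap\mathrm{Span}\,\mathcal{T}=0$ follows from $\mathcal{R}\cup\mathcal{T}$ being a basis of $v$), and it is essentially the standard proof of the dimension formula itself, so your version is self-contained and does not need Lemma \ref{modularity} as an input. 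The trade-off is length: the paper's count is shorter, while your argument is more elementary and also makes explicit the disjointness of $\mathcal{R},\mathcal{S},\mathcal{T}$, which the paper leaves implicit (and which its cardinality count silently uses, since otherwise one only gets $\vert\mathcal{R}\cup\mathcal{S}\cup\mathcal{T}\vert\leq\vert\mathcal{R}\vert+\vert\mathcal{S}\vert+\vert\mathcal{T}\vert$ --- though that inequality still suffices for the paper's conclusion). Either proof is acceptable.
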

\begin{proof}
    Since the set $\mathcal{R}\cup \mathcal{S}$ is a basis for $u$ and the set $\mathcal{R}\cup \mathcal{T}$ is a basis for $v$, the set $\mathcal{R}\cup \mathcal{S}\cup \mathcal{T}$ spans $u+v$.

    By Lemma \ref{modularity}, 
    \begin{equation*}
        \dim (u+v)=\dim u+\dim v-\dim (u\cap v)=\vert \mathcal{R}\vert + \vert \mathcal{S}\vert + \vert \mathcal{T}\vert.
    \end{equation*} 
    The result follows.
\end{proof}

For $0\leq \ell\leq n$, let the set $P_{\ell}$ consist of the $\ell$-dimensional subspaces of $V$. 
Note that $X=P_k$. Also note that $P_0=\{0\}$ and $P_n=\{V\}$.

\begin{lemma}{\rm{\cite[p.~269]{BCN}}}
\label{introlem0}
For $x,y\in X$ the dimension of $x\cap y$ is $k-\partial(x,y)$.
\end{lemma}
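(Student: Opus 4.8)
The plan is to relate the path-length distance $\partial(x,y)$ in $\Gamma=J_q(n,k)$ to the dimension of $x\cap y$ by a direct induction on $\partial(x,y)$, exploiting the fact that each step along a geodesic in the Grassmann graph changes $\dim(x\cap z)$ by exactly one. First I would establish the base cases: if $\partial(x,y)=0$ then $x=y$ and $\dim(x\cap y)=k=k-0$; if $\partial(x,y)=1$ then by the very definition of adjacency in $J_q(n,k)$ we have $\dim(x\cap y)=k-1$, matching the claim. These are immediate from the definitions in Section \ref{prelim}.

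For the inductive step, suppose the claim holds for all pairs at distance less than $i$, and let $x,y\in X$ with $\partial(x,y)=i\geq 2$. Choose a vertex $z$ on a geodesic from $x$ to $y$ with $\partial(x,z)=1$ and $\partial(z,y)=i-1$. By the induction hypothesis, $\dim(z\cap y)=k-(i-1)=k-i+1$. The key linear-algebra step is to bound $\dim(x\cap y)$ from above and below using $z$. For the upper bound, I would argue that $x\cap y\subseteq x\cap z$ fails in general, so instead I would use that $x\cap y\subseteq z$ is likewise not automatic; the cleaner route is: since $x$ and $z$ agree on a hyperplane $x\cap z$ of $z$, any subspace of $y$ that also lies in $x$ must lie in $x\cap z\cap y$, hence $x\cap y = x\cap z\cap y$, which has codimension at most one inside $z\cap y$; therefore $\dim(x\cap y)\geq \dim(z\cap y)-1 = k-i$. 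For the reverse inequality $\dim(x\cap y)\leq k-i$, I would use the general principle that adjacent vertices have $\dim(x\cap z)=k-1$ so $\bigl|\dim(x\cap y)-\dim(z\cap y)\bigr|\leq 1$ (this is Lemma \ref{modularity} applied twice, comparing $x+ (z\cap y)$ and $z + (z\cap y)$ inside $x+z$), combined with the fact that $\partial(x,y)=i$ forces $\dim(x\cap y)\neq k-i+1$ and $\dim(x\cap y)\neq k-i+2,\dots$; more precisely, if $\dim(x\cap y)\geq k-i+1$ then one could build a geodesic of length $<i$ from $x$ to $y$ through a common subspace, contradicting $\partial(x,y)=i$. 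Since $\dim(x\cap y)$ is thus squeezed between $k-i$ and $k-i$, we conclude $\dim(x\cap y)=k-i$.

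An alternative, and perhaps cleaner, approach avoids induction on geodesics entirely: one shows directly that $\partial(x,y)=k-\dim(x\cap y)$ by exhibiting a geodesic of that length. Given $x,y$ with $\dim(x\cap y)=k-i$, pick a basis $\mathcal{R}$ for $x\cap y$, extend to a basis $\mathcal{R}\cup\{s_1,\dots,s_i\}$ of $x$ and $\mathcal{R}\cup\{t_1,\dots,t_i\}$ of $y$ (using Lemma \ref{linin} to see these extensions have the right size), and define $x_0=x$, $x_j=\text{Span}(\mathcal{R}\cup\{t_1,\dots,t_j\}\cup\{s_{j+1},\dots,s_i\})$ for $1\leq j\leq i$, so that $x_i=y$. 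Each consecutive pair $x_{j-1},x_j$ differs in exactly one basis vector, so $\dim(x_{j-1}\cap x_j)=k-1$ and they are adjacent; hence $\partial(x,y)\leq i$. For the matching lower bound, the squeezing argument of the previous paragraph (adjacent vertices change the intersection dimension with $y$ by at most one) gives $\partial(x,y)\geq k-\dim(x\cap y)=i$. The main obstacle in either approach is the clean justification that adjacency changes $\dim(\,\cdot\,\cap y)$ by at most one; this is a short modular-lattice computation via Lemma \ref{modularity}, but it must be stated carefully to avoid circularity with the distance function.
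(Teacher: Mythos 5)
The paper offers no proof of Lemma \ref{introlem0}; it simply cites \cite[p.~269]{BCN}, so the comparison here is with the standard folklore argument rather than with anything in the text. Your second approach is exactly that standard argument, and it is correct and complete: the basis-swap chain $x_0,\dots,x_i$ is legitimate because Lemma \ref{linin} makes $\mathcal{R}\cup\{s_1,\dots,s_i\}\cup\{t_1,\dots,t_i\}$ linearly independent in $x+y$, so each $x_j$ is $k$-dimensional and consecutive terms meet in dimension $k-1$; this yields $\partial(x,y)\le k-\dim(x\cap y)$. For the reverse inequality, for adjacent $x,z$ and any $y$ one has $x\cap y\supseteq (x\cap z)\cap(z\cap y)$ and $\dim\bigl((x\cap z)\cap(z\cap y)\bigr)\ge\dim(z\cap y)-1$ because $x\cap z$ is a hyperplane of $z$; iterating along a shortest path terminating at $y$ gives $\partial(x,y)\ge k-\dim(x\cap y)$. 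There is no circularity, since the hyperplane estimate uses only the definition of adjacency and never the distance function. The one genuine misstep is in your first approach, where you assert $x\cap y=x\cap z\cap y$ for $z$ on a geodesic: this is equivalent to $x\cap y\subseteq z$, which is the content of Lemma \ref{contain} and cannot be assumed at this stage (in the paper's logical order it would be deduced from Lemma \ref{introlem0}, not used to prove it). Fortunately the inequality you actually need follows from the containment $x\cap y\supseteq x\cap z\cap y$ together with the codimension-one estimate, so the conclusion survives; still, I would discard the first approach and keep only the second, which is self-contained and cleaner.
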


\begin{lemma}
\label{sumdim}
For $x,y\in X$ the dimension of $x+y$ is $k+\partial(x,y)$.
\end{lemma}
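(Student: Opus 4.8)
The plan is to combine Lemma \ref{modularity} (modularity) with Lemma \ref{introlem0} (which identifies $\dim(x\cap y)$ with $k-\partial(x,y)$). Since $x$ and $y$ are both $k$-dimensional subspaces of $V$, we have $\dim x = \dim y = k$, so the modularity identity reads $2k = \dim(x\cap y) + \dim(x+y)$.

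Next, I would substitute the value of $\dim(x\cap y)$ supplied by Lemma \ref{introlem0}, namely $\dim(x\cap y) = k - \partial(x,y)$, into this equation. Solving for $\dim(x+y)$ then gives $\dim(x+y) = 2k - (k-\partial(x,y)) = k + \partial(x,y)$, which is exactly the claimed statement. This is a direct two-line computation, so there is really no obstacle to speak of — the only care needed is to note explicitly that both $x$ and $y$ have dimension $k$ because they are vertices of $\Gamma = J_q(n,k)$, i.e. elements of $X = P_k$.

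In LaTeX, the proof would look essentially like:
\begin{proof}
Since $x,y\in X=P_k$, we have $\dim x=\dim y=k$. By Lemma \ref{modularity},
\begin{equation*}
    \dim(x+y)=\dim x+\dim y-\dim(x\cap y)=2k-\dim(x\cap y).
\end{equation*}
By Lemma \ref{introlem0}, $\dim(x\cap y)=k-\partial(x,y)$. Combining these, $\dim(x+y)=2k-(k-\partial(x,y))=k+\partial(x,y)$.
\end{proof}

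Since this is so short, the main "work" is simply recognizing that Lemma \ref{sumdim} is the natural companion to Lemma \ref{introlem0} and that it follows immediately once modularity is invoked; I expect the author's proof to be structured in precisely this way, possibly with a one-line appeal to both lemmas.
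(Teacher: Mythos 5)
Your proof is correct and is exactly the argument the paper intends: the paper's own proof is the one-line remark ``Routine using Lemmas \ref{modularity}, \ref{introlem0},'' and your computation $\dim(x+y)=2k-(k-\partial(x,y))=k+\partial(x,y)$ is precisely that routine verification spelled out.
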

\begin{proof}
    Routine using Lemmas \ref{modularity}, \ref{introlem0}.
\end{proof}

\begin{lemma}
\label{contain}
Let $x,y,z\in X$ satisfy $\partial(x,y)=\partial(x,z)+\partial(z,y)$. Then $x\cap y\subseteq z\subseteq x+y$.
\end{lemma}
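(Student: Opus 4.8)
The plan is to prove both containments by pure dimension counting, invoking Lemma \ref{modularity} together with the dimension formulas from Lemmas \ref{introlem0} and \ref{sumdim}. I would begin by abbreviating $a=\partial(x,z)$ and $b=\partial(z,y)$, so that the hypothesis reads $\partial(x,y)=a+b$. Then Lemma \ref{introlem0} gives $\dim(x\cap z)=k-a$, $\dim(y\cap z)=k-b$, $\dim(x\cap y)=k-a-b$, and Lemma \ref{sumdim} gives $\dim(x+z)=k+a$, $\dim(y+z)=k+b$, $\dim(x+y)=k+a+b$.

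For $x\cap y\subseteq z$, I would observe that $x\cap z$ and $y\cap z$ are both subspaces of the $k$-dimensional space $z$, so their sum has dimension at most $k$; then Lemma \ref{modularity} applied to the pair $x\cap z,\,y\cap z$ (whose intersection is $x\cap y\cap z$) yields
\[
\dim(x\cap y\cap z)=\dim(x\cap z)+\dim(y\cap z)-\dim\bigl((x\cap z)+(y\cap z)\bigr)\;\geq\;(k-a)+(k-b)-k=k-a-b .
\]
Since $x\cap y\cap z\subseteq x\cap y$ and the latter already has dimension $k-a-b$, equality holds throughout, so $x\cap y\cap z=x\cap y$, which is exactly $x\cap y\subseteq z$.

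For $z\subseteq x+y$, I would run the dual argument: $x+z$ and $y+z$ both contain $z$, so their intersection has dimension at least $k$; then Lemma \ref{modularity} applied to the pair $x+z,\,y+z$ (whose sum is $x+y+z$) yields
\[
\dim(x+y+z)=\dim(x+z)+\dim(y+z)-\dim\bigl((x+z)\cap(y+z)\bigr)\;\leq\;(k+a)+(k+b)-k=k+a+b .
\]
Since $x+y\subseteq x+y+z$ and $x+y$ already has dimension $k+a+b$, equality holds, so $x+y+z=x+y$, which is exactly $z\subseteq x+y$.

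I do not expect a genuine obstacle here; the only thing to watch is the direction of each inequality (an upper bound on a sum of subspaces contained in $z$ in the first part, a lower bound on an intersection of subspaces containing $z$ in the second). As an alternative, the second containment could instead be deduced from the first by applying an orthogonality duality on $P_q(n)$, which reverses inclusion and interchanges $\cap$ with $+$, combined with the isomorphism $J_q(n,k)\cong J_q(n,n-k)$; but the direct count above is cleaner and sidesteps tracking how $\partial$ behaves under that duality.
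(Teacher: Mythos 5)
Your argument is correct, and since the paper dismisses this lemma as ``Routine from linear algebra,'' your dimension count via Lemmas \ref{modularity}, \ref{introlem0}, and \ref{sumdim} is exactly the kind of verification the author intends. Both inequality directions are handled correctly, so nothing further is needed.
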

\begin{proof}
    Routine from linear algebra.
\end{proof}

\begin{definition}
\label{defs}
For $u\in P$ define the set 
\begin{equation*}
  \Omega(u)=\{s\in P_1\mid s\subseteq u\}.  
\end{equation*}
\end{definition}

Note that $\Omega(V)=P_1$.

For $u,v\in P$ we have 
\begin{equation}
\label{scap}
    \Omega(u)\cap \Omega(v)=\Omega(u\cap v).
\end{equation}

The following notation will be useful. For an integer $m\geq 0$, define
\begin{equation*}
    [m]^{!}=[m][m-1]\cdots[2][1].
\end{equation*}

We interpret $[0]^{!}=1$.

For integers $0\leq r\leq m$, define
\begin{equation*}
    \begin{bmatrix}
    m\\r
    \end{bmatrix}=\frac{[m]^{!}}{[r]^{!}[m-r]^{!}}.
\end{equation*}

\begin{lemma}{\rm{\cite[Theorem~9.3.2]{BCN}}}
    \label{subspacesize}
    For integers $0\leq r\leq m$, $\begin{bmatrix}
    m\\r
\end{bmatrix}$ is equal to the number of $r$-dimensional subspaces of a given element in $P_m$.
\end{lemma}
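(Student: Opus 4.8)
The plan is to prove this by a direct double-counting argument, with the $q$-factorial identity emerging at the end from elementary bookkeeping. Fix an element $W\in P_m$, so that $W$ is an $m$-dimensional vector space over $\FF_q$. First I would count the number $N$ of ordered $r$-tuples $(v_1,\dots,v_r)$ of linearly independent vectors in $W$. Choosing $v_1$ means choosing any nonzero vector, giving $q^m-1$ options; once $v_1,\dots,v_{i-1}$ have been chosen, they span an $(i-1)$-dimensional subspace, and $v_i$ must lie outside it, giving $q^m-q^{i-1}$ options. Hence
\begin{equation*}
    N=\prod_{i=0}^{r-1}\bigl(q^m-q^i\bigr).
\end{equation*}
On the other hand, every such tuple spans a unique $r$-dimensional subspace $U\subseteq W$, and for each fixed $U$ the same reasoning (applied with $U$ in place of $W$) shows that $U$ has exactly $\prod_{i=0}^{r-1}(q^r-q^i)$ ordered bases. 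Dividing, the number of $r$-dimensional subspaces of $W$ equals
\begin{equation*}
    \frac{\prod_{i=0}^{r-1}(q^m-q^i)}{\prod_{i=0}^{r-1}(q^r-q^i)}.
\end{equation*}

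Next I would simplify this ratio and match it to $\begin{bmatrix} m\\r\end{bmatrix}$. Writing $q^m-q^i=q^i(q^{m-i}-1)$ and similarly for the denominator, the common factor $q^{0+1+\cdots+(r-1)}$ cancels, leaving
\begin{equation*}
    \frac{(q^m-1)(q^{m-1}-1)\cdots(q^{m-r+1}-1)}{(q^r-1)(q^{r-1}-1)\cdots(q-1)}.
\end{equation*}
Multiplying numerator and denominator by $\prod_{j=1}^{m-r}(q^j-1)$ reassembles the numerator as $\prod_{j=1}^{m}(q^j-1)$ and the denominator as $\bigl(\prod_{j=1}^{r}(q^j-1)\bigr)\bigl(\prod_{j=1}^{m-r}(q^j-1)\bigr)$; dividing each of the $m$ factors $q^j-1$ on top and bottom by $q-1$ converts this to $[m]^!\big/\bigl([r]^![m-r]^!\bigr)=\begin{bmatrix} m\\r\end{bmatrix}$, which is exactly the claim. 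I would also note the boundary cases $r=0$ and $r=m$ separately: there the count is $1$, consistent with the convention $[0]^!=1$.

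There is no real conceptual obstacle here, since the statement is classical; the only delicate point is the bookkeeping — keeping careful track of which powers of $q$ factor out of each product and verifying that the telescoping of the $q$-factorials is exact. If one preferred to avoid counting ordered bases, an alternative is induction on $m$ via a $q$-analogue of Pascal's recurrence $\begin{bmatrix} m\\r\end{bmatrix}=\begin{bmatrix} m-1\\r-1\end{bmatrix}+q^{r}\begin{bmatrix} m-1\\r\end{bmatrix}$, splitting the $r$-dimensional subspaces of $W$ relative to a fixed hyperplane of $W$; but the direct count above is shorter and entirely self-contained, so that is the route I would take.
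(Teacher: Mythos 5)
Your proof is correct. The paper itself gives no argument for this lemma; it simply cites \cite[Theorem~9.3.2]{BCN}, so there is nothing to compare against. Your double-counting of ordered linearly independent $r$-tuples, the cancellation of the common factor $q^{0+1+\cdots+(r-1)}$, and the rewriting of the resulting ratio as $[m]^{!}\big/\bigl([r]^{!}[m-r]^{!}\bigr)$ are all accurate, and the boundary cases $r=0$ and $r=m$ are handled consistently with the convention $[0]^{!}=1$. This is the standard self-contained proof of the classical fact, and it would serve as a valid replacement for the citation.
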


By Lemma \ref{subspacesize}, we find that for all $u\in P$,
\begin{equation}
    \label{orbitsize}
    \bigl|\Omega(u)\bigr|=[m],
\end{equation}
where $u\in P_{m}$.

Letting $u=V$ in (\ref{orbitsize}), we get
\begin{equation*}
    \vert P_1\vert = \bigl\vert \Omega(V)\bigr\vert =[n].
\end{equation*}

We now comment on the symmetries of $P$. Recall that the general linear group $GL(V)$ consists of the invertible $\mathbb{F}_q$-linear maps from $V$ to $V$. The action of $GL(V)$ on $V$ induces a permutation action of $GL(V)$ on the set $P$. This permutation action respects the partial order on $P$. The orbits of the action are $P_\ell$ for $0\leq\ell\leq n$.

\begin{lemma}
\label{presdist}
    For $x,y\in X$ and $\sigma\in GL(V)$, we have $\partial(x,y)=\partial(\sigma(x),\sigma(y))$.
\end{lemma}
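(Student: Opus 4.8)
The plan is to convert distance in $\Gamma$ into a dimension count via Lemma \ref{introlem0}, and then to exploit that an invertible linear map preserves dimensions and commutes with intersection of subspaces. First I would observe that since $\sigma\in GL(V)$ is an invertible $\mathbb{F}_q$-linear map, it sends each $k$-dimensional subspace of $V$ to a $k$-dimensional subspace, and the same holds for $\sigma^{-1}$; hence $\sigma$ restricts to a bijection from $X$ to $X$, so in particular $\sigma(x),\sigma(y)\in X$ and the quantity $\partial(\sigma(x),\sigma(y))$ is defined.

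The next step is to establish the identity $\sigma(x\cap y)=\sigma(x)\cap\sigma(y)$ for subspaces $x,y$ of $V$. The inclusion $\subseteq$ holds for any linear map. For the reverse inclusion, suppose $w\in\sigma(x)\cap\sigma(y)$; write $w=\sigma(v)$, and since $w\in\sigma(x)$ we also have $w=\sigma(v')$ for some $v'\in x$, so injectivity of $\sigma$ forces $v=v'\in x$, and similarly $v\in y$, whence $w=\sigma(v)\in\sigma(x\cap y)$. Because $\sigma$ is a linear bijection it preserves the dimension of every subspace, so
\begin{equation*}
\dim\bigl(\sigma(x)\cap\sigma(y)\bigr)=\dim\sigma(x\cap y)=\dim(x\cap y).
\end{equation*}

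Finally I would apply Lemma \ref{introlem0} twice: it gives $\dim(x\cap y)=k-\partial(x,y)$ and $\dim(\sigma(x)\cap\sigma(y))=k-\partial(\sigma(x),\sigma(y))$. Combining these with the displayed equality and cancelling $k$ yields $\partial(x,y)=\partial(\sigma(x),\sigma(y))$, as desired.

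The argument is routine, and there is no serious obstacle; the one place that warrants a moment's care is the identity $\sigma(x\cap y)=\sigma(x)\cap\sigma(y)$, where injectivity of $\sigma$ is genuinely needed for the nontrivial inclusion. An alternative route would be to first check that $\sigma$ preserves the adjacency relation $\dim(x\cap y)=k-1$, conclude that $\sigma$ induces an automorphism of the graph $\Gamma$, and then use that graph automorphisms preserve the path-length distance; this avoids Lemma \ref{introlem0}, but the dimension-count approach above is the most direct.
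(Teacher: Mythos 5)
Your proposal is correct and follows the same route as the paper, which simply cites Lemma \ref{introlem0} together with the fact that the $\sigma$-action preserves dimension; you have merely filled in the routine details (that $\sigma$ permutes $X$ and that $\sigma(x\cap y)=\sigma(x)\cap\sigma(y)$, the latter using injectivity). No issues.
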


\begin{proof}
    Immediate from Lemma \ref{introlem0} and the fact that the $\sigma$-action preserves dimension.
\end{proof}

\section{Representing $P$ using a Euclidean space $E$}
\label{Euc1}

In this section we represent the elements of $P$ as vectors in a Euclidean space. We will do this in two stages. In the first stage we consider the elements of $P_1$. 

Let $E$ denote a Euclidean space with dimension $[n]-1$ and bilinear form $\left<\; ,\;\right>$. Recall the notation $\Vert \nu\Vert^2=\left<\nu,\nu\right>$ for any $\nu\in E$. We define a function \begin{equation}
    \begin{aligned}
    \label{basichat}
        P_1&\rightarrow E\\s&\mapsto\widehat{s}
    \end{aligned}
\end{equation}
that satisfies the following conditions (C1) $-$ (C4):

\begin{enumerate}[label=(C\arabic*)]
    \item $E=\text{Span}\bigl\{\widehat{s}\mid s\in P_1\bigr\}$;
    
    \item for $s\in P_1$, $\bigl\Vert\widehat{s}\bigr\Vert^2=[n]-1$;
    
    \item for distinct $s,t\in P_1$, $\Bigl<\widehat{s},\widehat{t}\Bigr>=-1$;
    
    \item $\mathlarger{\sum_{s\in P_1}\widehat{s}=0}$.
\end{enumerate}

Next, we extend the function (\ref{basichat}) to a function
\begin{equation}
    \begin{aligned}
    \label{hardhat}
        P&\rightarrow E \\
        u&\mapsto \widehat{u}
    \end{aligned}
\end{equation}
such that for all $u\in P$, 
\begin{equation}
\label{hardhatsum}
    \widehat{u}=\sum_{s\in \Omega(u)}\widehat{s}.
\end{equation}

Note that $\widehat{u}=0$ if $u\in P_0$ or $u\in P_n$. 

In (C4), we gave a linear dependence on $\bigl\{\widehat{s}\bigr\}_{s\in P_1}$. Next we show that (C4) is essentially the only linear dependence among $\bigl\{\widehat{s}\bigr\}_{s\in P_1}$.

We use the following notation. For sets $\mathcal{R}\subseteq \mathcal{S}$, define $\mathcal{S}\setminus \mathcal{R}$ to be the complement of $\mathcal{R}$ in $\mathcal{S}$.

\begin{lemma}
\label{sum0converse}
Given real numbers $\{\alpha_s\}_{s\in P_1}$ the following are equivalent:
\begin{enumerate}[label=\rm{(\roman*)}]
    \item $0=\mathlarger{\sum_{s\in P_1}\alpha_s\widehat{s}}$;

    \item $\alpha_s$ is independent of $s$ for $s\in P_1$.
\end{enumerate}
\end{lemma}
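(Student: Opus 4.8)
The plan is to prove the equivalence in Lemma~\ref{sum0converse}. The implication (ii)$\Rightarrow$(i) is immediate: if every $\alpha_s$ equals a common value $\alpha$, then $\sum_{s\in P_1}\alpha_s\widehat{s}=\alpha\sum_{s\in P_1}\widehat{s}=0$ by (C4). The substance is the direction (i)$\Rightarrow$(ii), and here the natural approach is to exploit the highly symmetric Gram matrix of the vectors $\{\widehat{s}\}_{s\in P_1}$ recorded in (C2), (C3): it has the form $[n]J' - \ldots$; more precisely, writing $N=[n]=|P_1|$, the Gram matrix $G$ with rows and columns indexed by $P_1$ has diagonal entries $N-1$ and off-diagonal entries $-1$, so $G=NI-J$, where $I$ is the identity and $J$ is the all-ones matrix.

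First I would set up the following elementary fact about the matrix $G=NI-J$ (this is the kind of thing that belongs in the appendix of linear algebra facts the introduction promises, so I may just cite it): its kernel is exactly the one-dimensional space spanned by the all-ones vector $\mathbf{1}$. Indeed $G\mathbf{1}=(NI-J)\mathbf{1}=N\mathbf{1}-N\mathbf{1}=0$, and conversely if $G\beta=0$ for a column vector $\beta=(\alpha_s)_{s\in P_1}$ then $N\beta=J\beta=(\sum_s\alpha_s)\mathbf{1}$, which forces every $\alpha_s$ to equal the common value $\tfrac1N\sum_s\alpha_s$, i.e. $\beta$ is a scalar multiple of $\mathbf{1}$. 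Second, I would connect this to the vectors in $E$: suppose $0=\sum_{s\in P_1}\alpha_s\widehat{s}$. Taking the inner product of both sides with $\widehat{t}$ for each $t\in P_1$ and using (C2), (C3) gives, for every $t$,
\begin{equation*}
    0=\Bigl<\sum_{s\in P_1}\alpha_s\widehat{s},\,\widehat{t}\Bigr>=\sum_{s\in P_1}\alpha_s\bigl<\widehat{s},\widehat{t}\bigr>=(N-1)\alpha_t-\sum_{s\neq t}\alpha_s=N\alpha_t-\sum_{s\in P_1}\alpha_s,
\end{equation*}
which is precisely the statement $G\beta=0$. By the kernel computation, $\beta$ is a scalar multiple of $\mathbf{1}$, i.e. $\alpha_s$ is independent of $s$, which is (ii).

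One subtlety to address carefully: taking inner products with the $\widehat{t}$ only shows that $\sum_s\alpha_s\widehat{s}$ is orthogonal to every $\widehat{t}$; to conclude it is zero I use (C1), which says the $\widehat{t}$ span $E$, so a vector orthogonal to all of them is zero — but in fact I do not even need the converse direction of that implication here, since I am assuming $\sum_s\alpha_s\widehat{s}=0$ from the start and only deriving consequences. So the logic is clean: assume (i), pair with each $\widehat{t}$, get the linear system $G\beta=0$, solve it. I do not expect any real obstacle; the only thing to be a little careful about is that the argument implicitly needs $[n]-1\geq 1$ so that the $\widehat{s}$ are not forced to be zero, but this holds since $n\geq 2k\geq 6$. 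I would present the proof in essentially the order above: dispose of (ii)$\Rightarrow$(i) in one line, then for (i)$\Rightarrow$(ii) pair with $\widehat{t}$, extract the scalar equations, and finish by observing they say every $\alpha_t$ equals $\tfrac1{[n]}\sum_{s\in P_1}\alpha_s$.
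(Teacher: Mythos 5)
Your proof is correct, but it takes a genuinely different route from the paper's. For (i)$\Rightarrow$(ii) the paper subtracts the relation $0=\sum_{s\in P_1}\alpha_t\widehat{s}$ (a scalar multiple of (C4)) from the hypothesis, leaving a dependency among $\bigl\{\widehat{s}\bigr\}_{s\in P_1\setminus\{t\}}$, and then invokes Lemma \ref{sum0basis} from the Appendix (which rests on (C1), (C4) and the fact that $\dim E=[n]-1$) to conclude that these $[n]-1$ vectors are linearly independent, forcing $\alpha_s=\alpha_t$ for all $s$. You instead pair the hypothesis with each $\widehat{t}$ and work only with the Gram data (C2), (C3): your computation $(N-1)\alpha_t-\sum_{s\neq t}\alpha_s=N\alpha_t-\sum_{s}\alpha_s$ is correct, the resulting system says every $\alpha_t$ equals $\tfrac{1}{[n]}\sum_{s}\alpha_s$, and the kernel of $NI-J$ is indeed spanned by the all-ones vector. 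What your version buys is self-containment: it needs neither (C1), nor (C4), nor the dimension count, and it in effect re-derives that the all-ones relation is the only dependency, directly from the inner products. What the paper's version buys is brevity given the appendix lemma, which it needs elsewhere anyway. Two small remarks: your closing worry about $[n]-1\geq 1$ is unnecessary, since the derivation of $[n]\alpha_t=\sum_s\alpha_s$ is valid regardless and the standing assumption $n>2k\geq 6$ makes $[n]-1>0$ automatic; and the "subtlety" paragraph about orthogonality is, as you yourself note, moot because you assume the vector is zero rather than merely orthogonal to all $\widehat{t}$.
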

\begin{proof}
(i)$\Rightarrow$(ii)
    Pick $t\in P_1$. Referring to (C4), multiply each side by $\alpha_t$ to obtain
    \begin{equation}
    \label{wv}
        0=\sum_{s\in P_1}\alpha_{t}\widehat{s}.
    \end{equation}
    
    Subtract (\ref{wv}) from the equation in (i) to obtain 
    \begin{equation*}
        0=\sum_{s\in P_1\setminus \{t\}}(\alpha_s-\alpha_t)\widehat{s}.
    \end{equation*}

    By Lemma \ref{sum0basis} in the Appendix, the vectors $\bigl\{\widehat{s}\bigr\}_{s\in P_1\setminus \{t\}}$ form a basis for $E$, so they are linearly independent. Hence $\alpha_s-\alpha_{t}=0$ for all $s\in P_1$. The result follows.

    (ii)$\Rightarrow$(i) Immediate from (C4).
\end{proof}

\begin{lemma}
\label{sigmau}
The Euclidean space $E$ becomes a $GL(V)$-module such that for all $u\in P$ and $\sigma\in GL(V)$, 
\begin{equation*}
    \sigma\bigl(\widehat{u}\bigr)=\widehat{\sigma(u)}.
\end{equation*}
\end{lemma}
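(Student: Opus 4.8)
The plan is to transport the permutation action of $GL(V)$ on the set $P_1$ to a linear action on $E$ by means of a quotient construction, and then extend from $P_1$ to all of $P$ using (\ref{hardhatsum}).

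First I would introduce the real vector space $M$ with basis $\{e_s\mid s\in P_1\}$ indexed by $P_1$. Each $\sigma\in GL(V)$ restricts to a bijection $P_1\to P_1$, since $\sigma$ is invertible and preserves dimension; therefore the rule $\sigma(e_s)=e_{\sigma(s)}$, extended linearly, makes $M$ into a $GL(V)$-module. Next I would consider the linear map $\pi\colon M\to E$ determined by $e_s\mapsto \widehat{s}$ for $s\in P_1$. By (C1) the map $\pi$ is surjective, and by Lemma \ref{sum0converse} its kernel is precisely the span of $\sum_{s\in P_1}e_s$: indeed $\sum_{s\in P_1}\alpha_s\widehat{s}=0$ forces the $\alpha_s$ to be independent of $s$, which means $\sum_{s\in P_1}\alpha_s e_s$ is a scalar multiple of $\sum_{s\in P_1}e_s$, while (C4) shows this vector does lie in $\ker\pi$. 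The key point is that $\ker\pi$ is a $GL(V)$-submodule of $M$, because each $\sigma$ merely permutes the basis vectors $e_s$ and hence fixes $\sum_{s\in P_1}e_s$. Consequently the $GL(V)$-action on $M$ descends to $M/\ker\pi$, and the linear isomorphism $M/\ker\pi\cong E$ induced by $\pi$ transports this to a $GL(V)$-module structure on $E$. By construction, for all $s\in P_1$ and $\sigma\in GL(V)$ we have $\sigma(\widehat{s})=\pi(\sigma(e_s))=\pi(e_{\sigma(s)})=\widehat{\sigma(s)}$, and the group-action axioms hold automatically, being inherited from $M$.

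Finally I would handle general $u\in P$. Fix $\sigma\in GL(V)$. Since $\sigma$ is an invertible linear map preserving dimension, $s\mapsto \sigma(s)$ is a bijection from $\Omega(u)$ onto $\Omega(\sigma(u))$. Using linearity of the $\sigma$-action on $E$, the case of $P_1$ already established, and (\ref{hardhatsum}),
\[
\sigma\bigl(\widehat{u}\bigr)=\sigma\Bigl(\sum_{s\in\Omega(u)}\widehat{s}\Bigr)=\sum_{s\in\Omega(u)}\widehat{\sigma(s)}=\sum_{t\in\Omega(\sigma(u))}\widehat{t}=\widehat{\sigma(u)},
\]
as desired.

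I expect the only delicate point — the main obstacle — to be the well-definedness of the action, that is, verifying that $\ker\pi$ is $GL(V)$-invariant so that the action descends to the quotient. This is exactly the step that uses Lemma \ref{sum0converse}: it guarantees that $\ker\pi$ is nothing more than the span of the all-ones vector $\sum_{s\in P_1}e_s$, rather than some larger subspace of $M$ that a permutation of the $e_s$ might fail to carry into itself. Everything after that point is routine.
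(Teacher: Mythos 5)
Your proof is correct, and it rests on exactly the same key fact as the paper's: the only linear dependence among $\bigl\{\widehat{s}\bigr\}_{s\in P_1}$ is the all-ones relation (C4), which every permutation of $P_1$ preserves. The packaging differs slightly. The paper constructs $\Theta(\sigma)\in GL(E)$ directly by invoking Lemma \ref{sum0} from the Appendix (two spanning sets of size $\dim E+1$ that each sum to zero are matched by a unique invertible linear map), and then verifies the homomorphism property $\Theta(\sigma)\Theta(\tau)=\Theta(\sigma\tau)$ by hand. You instead realize $E$ as the quotient of the permutation module $M=\bigoplus_{s\in P_1}\mathbb{R}e_s$ by the span of $\sum_{s\in P_1}e_s$, identifying that span as $\ker\pi$ via Lemma \ref{sum0converse}; this buys you the group-action axioms for free, since they are inherited from $M$. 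Your extension from $P_1$ to all of $P$ via (\ref{hardhatsum}) and the bijection $\Omega(u)\to\Omega(\sigma(u))$ is the same as the paper's. Both arguments ultimately reduce to Lemma \ref{sum0basis}, so there is no circularity in your use of Lemma \ref{sum0converse}, which the paper establishes before the present lemma.
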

\begin{proof}
It suffices to show that there exists a group homomorphism $\Theta:GL(V)\rightarrow GL(E)$ such that for all $u\in P$ and $\sigma\in GL(V)$, 
\begin{equation}
\label{thetau}
    \Theta(\sigma)\bigl(\widehat{u}\bigr)=\widehat{\sigma(u)}.
\end{equation}

Let $\sigma\in GL(V)$. We first show that there exists a unique $\Theta(\sigma)\in GL(E)$ such that (\ref{thetau}) holds for all $u\in P_1$.

By construction, the dimension of $E$ is $|P_1|-1$. 

Note that $\sigma$ acts as a permutation on $P_1$. By this along with (C1) and (C4), we obtain
\begin{equation*}
    E=\text{Span}\biggl\{\widehat{\sigma(s)}\;\Bigl\vert\; s\in P_1\biggr\}, \qquad \qquad \quad \sum_{s\in P_1}\widehat{\sigma(s)}=\sum_{s\in P_1}\widehat{s}=0.
\end{equation*}

By these comments and Lemma \ref{sum0} in the Appendix, there exists a unique $\Theta(\sigma)\in GL(E)$ that satisfies (\ref{thetau}) for all $u\in P_1$.

Next we show that $\Theta(\sigma)$ satisfies (\ref{thetau}) for all $u\in P$. By (\ref{hardhatsum}),
\begin{equation*}
        \Theta(\sigma)\bigl(\widehat{u}\bigr)=\Theta(\sigma)\left(\sum_{s\in \Omega(u)}\widehat{s}\right)
        =\sum_{s\in \Omega(u)}\Theta(\sigma)\bigl(\widehat{s}\bigr)
        =\sum_{s\in \Omega(u)}\widehat{\sigma(s)}
        =\widehat{\sigma(u)}.
\end{equation*}

Next we show that $\Theta$ is a group homomorphism. For $\sigma,\tau\in GL(V)$, we show that $\Theta(\sigma)\Theta(\tau)=\Theta(\sigma\tau)$. 

For $u\in P$ we have 
\begin{equation*}
    \Theta(\sigma)\Theta(\tau)\bigl(\widehat{u}\bigr)=\Theta(\sigma)\Bigl(\widehat{\tau(u)}\Bigr)=\widehat{\sigma\tau(u)}=\Theta(\sigma\tau)\bigl(\widehat{u}\bigr).
\end{equation*}

Therefore, $\Theta(\sigma)\Theta(\tau)=\Theta(\sigma\tau)$.

We have shown that there exists a unique group homomorphism $\Theta:GL(V)\rightarrow GL(E)$ that satisfies (\ref{thetau}) for all $\sigma\in GL(V)$ and $u\in P$. Consequently, $E$ becomes a $GL(V)$-module such that $\sigma\bigl(\widehat{u}\bigr)=\widehat{\sigma(u)}$
for all $u\in P$ and $\sigma\in GL(V)$.
\end{proof}

\begin{lemma}
For $\mu,\nu\in E$ and $\sigma\in GL(V)$,
\begin{equation*}
    \left<\mu,\nu\right>=\bigl<\sigma(\mu),\sigma(\nu)\bigr>.
\end{equation*}
\end{lemma}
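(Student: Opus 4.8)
The plan is to show that each $\sigma\in GL(V)$ acts on $E$ as an orthogonal transformation, and then reduce the general inner-product identity to the case of pairs of basis vectors $\widehat{s},\widehat{t}$ with $s,t\in P_1$, where conditions (C2) and (C3) do the work together with the fact that $\sigma$ permutes $P_1$. More precisely, since $E=\mathrm{Span}\{\widehat{s}\mid s\in P_1\}$ by (C1), it suffices to verify $\langle\widehat{s},\widehat{t}\rangle=\langle\sigma(\widehat{s}),\sigma(\widehat{t})\rangle$ for all $s,t\in P_1$; bilinearity of $\langle\,,\,\rangle$ then extends the identity from these spanning vectors to arbitrary $\mu,\nu\in E$.

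First I would invoke Lemma \ref{sigmau}, which gives $\sigma(\widehat{s})=\widehat{\sigma(s)}$ for all $s\in P_1$, and recall (from the discussion preceding Lemma \ref{presdist}) that $\sigma$ permutes the set $P_1$. Then, for $s,t\in P_1$, I would split into two cases. If $s=t$, then $\sigma(s)=\sigma(t)$ and by (C2) both $\|\widehat{s}\|^2$ and $\|\widehat{\sigma(s)}\|^2$ equal $[n]-1$, so $\langle\widehat{s},\widehat{s}\rangle=\langle\sigma(\widehat{s}),\sigma(\widehat{s})\rangle$. If $s\neq t$, then because $\sigma$ is a bijection on $P_1$ we have $\sigma(s)\neq\sigma(t)$, so by (C3) both $\langle\widehat{s},\widehat{t}\rangle$ and $\langle\widehat{\sigma(s)},\widehat{\sigma(t)}\rangle$ equal $-1$. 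In either case $\langle\widehat{s},\widehat{t}\rangle=\langle\sigma(\widehat{s}),\sigma(\widehat{t})\rangle$.

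To finish, I would write a general $\mu\in E$ as $\mu=\sum_{s\in P_1}\alpha_s\widehat{s}$ and $\nu=\sum_{t\in P_1}\beta_t\widehat{t}$ (possible by (C1)), and similarly expand $\sigma(\mu)=\sum_s\alpha_s\sigma(\widehat{s})$ and $\sigma(\nu)=\sum_t\beta_t\sigma(\widehat{t})$ using linearity of the $GL(V)$-action on $E$ from Lemma \ref{sigmau}. Expanding both $\langle\mu,\nu\rangle$ and $\langle\sigma(\mu),\sigma(\nu)\rangle$ by bilinearity reduces everything to the equalities $\langle\widehat{s},\widehat{t}\rangle=\langle\sigma(\widehat{s}),\sigma(\widehat{t})\rangle$ established above, so the two double sums agree term by term and the result follows.

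I do not anticipate a genuine obstacle here; this is essentially a bookkeeping argument. The only point requiring a small amount of care is making sure the $s=t$ diagonal case is handled separately from the $s\neq t$ off-diagonal case, since (C2) and (C3) are the two ingredients and they must be applied to the right pairs; the bijectivity of $\sigma$ on $P_1$ is what guarantees the case ($s=t$ vs.\ $s\neq t$) is preserved under $\sigma$, so no cross terms get mismatched.
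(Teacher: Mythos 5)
Your proposal is correct and follows essentially the same route as the paper: reduce via (C1) and bilinearity to the case $\mu=\widehat{s}$, $\nu=\widehat{t}$ with $s,t\in P_1$, then apply (C2) and (C3) together with the fact that $\sigma$ permutes $P_1$. You simply spell out the diagonal versus off-diagonal bookkeeping that the paper leaves as ``routine.''
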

\begin{proof}
In view of (C1), we may assume without loss that $\mu=\widehat{s}$ and $\nu=\widehat{t}$ where $s,t\in P_1$. The result is a routine consequence of (C2) and (C3).
\end{proof}

\begin{lemma}
\label{subgroup1}
    Pick a vector $\mu\in E$ and write 
    \begin{equation}
        \label{lincom}
            \mu=\sum_{s\in P_1}\alpha_s \widehat{s}.
    \end{equation}

    For $\sigma\in GL(V)$ that fixes $\mu$, $\alpha_s=\alpha_{\sigma(s)}$ for all $s\in P_1$.
\end{lemma}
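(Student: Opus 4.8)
The plan is to apply $\sigma$ to the expression (\ref{lincom}) and compare with $\mu$ itself. Since $\sigma$ fixes $\mu$, applying Lemma \ref{sigmau} termwise gives
\begin{equation*}
    \mu=\sigma(\mu)=\sum_{s\in P_1}\alpha_s\,\sigma\bigl(\widehat{s}\bigr)=\sum_{s\in P_1}\alpha_s\,\widehat{\sigma(s)}.
\end{equation*}
Because $\sigma$ acts as a permutation on $P_1$, I can reindex the last sum by $t=\sigma(s)$ to obtain $\mu=\sum_{t\in P_1}\alpha_{\sigma^{-1}(t)}\,\widehat{t}$. Subtracting (\ref{lincom}) (written with summation index $t$) yields the linear dependence
\begin{equation*}
    0=\sum_{t\in P_1}\bigl(\alpha_{\sigma^{-1}(t)}-\alpha_t\bigr)\widehat{t}.
\end{equation*}

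Next I would invoke Lemma \ref{sum0converse}: the coefficients $\alpha_{\sigma^{-1}(t)}-\alpha_t$ are independent of $t$, so there is a constant $c$ with $\alpha_{\sigma^{-1}(t)}-\alpha_t=c$ for all $t\in P_1$. To see that $c=0$, sum this identity over $t\in P_1$; since $\sigma^{-1}$ permutes $P_1$, the two sums $\sum_t\alpha_{\sigma^{-1}(t)}$ and $\sum_t\alpha_t$ are equal, so the left side is $0$, while the right side is $[n]\,c$. As $[n]\geq 2$, we conclude $c=0$, hence $\alpha_{\sigma^{-1}(t)}=\alpha_t$ for all $t\in P_1$. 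Replacing $t$ by $\sigma(s)$ gives $\alpha_s=\alpha_{\sigma(s)}$ for all $s\in P_1$, as desired.

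There is no real obstacle here; the only point requiring a moment's care is the vanishing of the constant $c$, which is handled by the summation argument above (equivalently, one could note that a constant-coefficient dependence on $\{\widehat{s}\}_{s\in P_1}$ is, by (C4) and Lemma \ref{sum0converse}, automatically consistent with any constant, so the constant must be pinned down by an extra relation — here, the trace/sum of the permuted coefficients). Everything else is a direct application of Lemma \ref{sigmau}, the permutation action of $GL(V)$ on $P_1$, and Lemma \ref{sum0converse}.
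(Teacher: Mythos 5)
Your proof is correct and follows essentially the same route as the paper: apply the group action to the expansion of $\mu$ (the paper applies $\sigma^{-1}$ where you apply $\sigma$, a cosmetic difference absorbed by your reindexing), subtract to get a linear dependence, invoke Lemma \ref{sum0converse} to see the coefficients are constant, and kill the constant by summing over $P_1$. No gaps.
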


\begin{proof}
    Referring to (\ref{lincom}), apply $\sigma^{-1}$ to each side and evaluate the result using Lemma \ref{sigmau} to get 
    \begin{equation}
    \label{hlincom}
        \mu=\sum_{s\in P_1}\alpha_s \widehat{\sigma^{-1}\left(s\right)}.
    \end{equation}
    We make a change of variables. In (\ref{hlincom}), replace $s$ by $\sigma(s)$ to get
    \begin{equation}
        \label{hlincomalt}
        \mu=\sum_{s\in P_1}\alpha_{\sigma(s)} \widehat{s}.
    \end{equation}
    Subtract (\ref{lincom}) from (\ref{hlincomalt}) to obtain 
    \begin{equation*}
        0=\sum_{s\in P_1}(\alpha_{\sigma(s)}-\alpha_{s})\widehat{s}.
    \end{equation*}

    By Lemma \ref{sum0converse}, the scalar $\alpha_{\sigma(s)}-\alpha_{s}$ is independent of $s\in P_1$. Denote this common value by $\beta$. Then
    \begin{equation}
    \label{betasum}
        \sum_{s\in P_1}\left(\alpha_{\sigma(s)}-\alpha_{s}\right)=\sum_{s\in P_1}\beta=\beta \vert P_1\vert.
    \end{equation}

    Note that 
    \begin{equation}
    \label{betasum0}
        \sum_{s\in P_1}\left(\alpha_{\sigma(s)}-\alpha_{s}\right)=\sum_{s\in P_1}\alpha_{s}-\sum_{s\in P_1}\alpha_{s}=0.
    \end{equation} 
    By (\ref{betasum}), (\ref{betasum0}) and the fact that $\vert P_1\vert$ is nonzero, we obtain $\beta=0$. The result follows.
\end{proof}

\begin{corollary}
\label{subgroup2}
    Pick a vector $\mu\in E$ and write 
    \begin{equation*}
            \mu=\sum_{s\in P_1}\alpha_s \widehat{s}.
    \end{equation*}

    For a subgroup $H$ of $GL(V)$ the following are equivalent:  
    \begin{enumerate}[label=\rm{(\roman*)}]
        \item every element of $H$ fixes $\mu$;

        \item for $s,t\in P_1$ that are contained in the same $H$-orbit, $\alpha_s=\alpha_t$.
    \end{enumerate}
\end{corollary}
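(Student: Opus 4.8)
The plan is to prove the equivalence in Corollary~\ref{subgroup2} by combining Lemma~\ref{subgroup1} with its partial converse, supplied essentially by Lemma~\ref{sum0converse}. The two implications are handled separately.

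\textbf{(i)$\Rightarrow$(ii).} Assume every element of $H$ fixes $\mu$. Pick $s,t\in P_1$ lying in the same $H$-orbit, so $t=\sigma(s)$ for some $\sigma\in H$. Since $\sigma$ fixes $\mu$, Lemma~\ref{subgroup1} gives $\alpha_s=\alpha_{\sigma(s)}=\alpha_t$, as desired. This direction is immediate.

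\textbf{(ii)$\Rightarrow$(i).} Assume that $\alpha_s=\alpha_t$ whenever $s,t\in P_1$ lie in the same $H$-orbit. Pick $\sigma\in H$; I must show $\sigma$ fixes $\mu$. Apply $\sigma$ to $\mu=\sum_{s\in P_1}\alpha_s\widehat{s}$ and use Lemma~\ref{sigmau} to get $\sigma(\mu)=\sum_{s\in P_1}\alpha_s\widehat{\sigma(s)}$; reindexing the sum by replacing $s$ with $\sigma^{-1}(s)$ yields $\sigma(\mu)=\sum_{s\in P_1}\alpha_{\sigma^{-1}(s)}\widehat{s}$. Now for each $s\in P_1$, the points $s$ and $\sigma^{-1}(s)$ lie in the same $H$-orbit (as $\sigma\in H$), so by hypothesis $\alpha_{\sigma^{-1}(s)}=\alpha_s$. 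Hence $\sigma(\mu)=\sum_{s\in P_1}\alpha_s\widehat{s}=\mu$. Since $\sigma\in H$ was arbitrary, every element of $H$ fixes $\mu$.

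Both directions are short and essentially bookkeeping; the only place requiring any care is the change of variables in (ii)$\Rightarrow$(i) and the observation that $s$ and $\sigma^{\pm1}(s)$ always share an $H$-orbit, so there is no genuine obstacle here. (Alternatively, one could phrase (ii)$\Rightarrow$(i) by subtracting to obtain $0=\sum_{s\in P_1}(\alpha_{\sigma^{-1}(s)}-\alpha_s)\widehat{s}$ and invoking Lemma~\ref{sum0converse} together with the fact that the orbit-sum of the differences vanishes, mirroring the proof of Lemma~\ref{subgroup1}; but the direct argument above avoids even that.)
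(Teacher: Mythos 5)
Your proof is correct and takes essentially the same route the paper intends: the paper dismisses the corollary as ``Immediate from Lemma \ref{subgroup1},'' and your two directions --- the forward one quoting that lemma for each $\sigma\in H$, and the converse by applying $\sigma$ to $\mu$, reindexing, and using constancy of $\alpha$ on $H$-orbits --- are exactly the routine details being left implicit. No gaps.
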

\begin{proof}
Immediate from Lemma \ref{subgroup1}.    
\end{proof}

Our next general goal is to restate Corollary \ref{subgroup2} using a different point of view.

\begin{definition}
    Let $\mathcal{S}$ denote a subset of $P_1$. By the \emph{characteristic vector of $\mathcal{S}$} we mean the vector
    \begin{equation*}
        \sum_{s\in \mathcal{S}}\widehat{s}.
    \end{equation*}
    Note that this characteristic vector is contained in $E$.
\end{definition}

\begin{lemma}
\label{char0}
    For a subset $\mathcal{S}\subseteq P_1$ the sum of the following vectors is zero:
    \begin{enumerate}[label={\rm{(\roman*)}}]
        \item the characteristic vector of $\mathcal{S}$;

        \item the characteristic vector of the set $P_1\setminus \mathcal{S}$.
    \end{enumerate}
\end{lemma}

\begin{proof}
    Immediate from (C4).
\end{proof}

\begin{lemma}
\label{character}
    Pick a vector $\mu\in E$ and a subgroup $H$ of $GL(V)$. Then the following are equivalent:
    \begin{enumerate}[label={\rm{(\roman*)}}]
        \item every element of $H$ fixes $\mu$;

        \item $\mu$ is contained in the span of the characteristic vectors of the $H$-orbits in $P_1$.
    \end{enumerate}
\end{lemma}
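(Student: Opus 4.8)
The plan is to prove Lemma \ref{character} by reducing it to Corollary \ref{subgroup2}, which already gives the desired characterization of the vectors fixed by every element of $H$ in terms of the coefficients $\alpha_s$ in the expansion $\mu=\sum_{s\in P_1}\alpha_s\widehat{s}$. So the whole task is to translate the condition ``$\alpha_s=\alpha_t$ whenever $s,t$ lie in the same $H$-orbit'' into the geometric condition ``$\mu$ lies in the span of the characteristic vectors of the $H$-orbits in $P_1$.''

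First I would fix a vector $\mu\in E$ and write $\mu=\sum_{s\in P_1}\alpha_s\widehat{s}$; such an expansion exists by (C1), though it is not unique because of the dependence (C4). For the direction (ii)$\Rightarrow$(i): suppose $\mu$ is a linear combination of the characteristic vectors of the $H$-orbits, say $\mu=\sum_{O}\beta_O\bigl(\sum_{s\in O}\widehat{s}\bigr)$ where $O$ ranges over the $H$-orbits in $P_1$. Collecting coefficients of each $\widehat{s}$, this exhibits a choice of coefficients $\alpha_s$ (namely $\alpha_s=\beta_O$ for $s\in O$) that is constant on each $H$-orbit; by Corollary \ref{subgroup2}(ii)$\Rightarrow$(i), every element of $H$ fixes $\mu$. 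For the direction (i)$\Rightarrow$(ii): suppose every element of $H$ fixes $\mu$. By Corollary \ref{subgroup2}(i)$\Rightarrow$(ii), for \emph{any} expansion $\mu=\sum_{s\in P_1}\alpha_s\widehat{s}$, the coefficient $\alpha_s$ agrees with $\alpha_t$ whenever $s,t$ lie in the same $H$-orbit. Fix one such expansion. Then, grouping the sum by $H$-orbits $O$ and letting $\alpha_O$ denote the common value of $\alpha_s$ for $s\in O$, we get $\mu=\sum_{O}\alpha_O\bigl(\sum_{s\in O}\widehat{s}\bigr)$, which is exactly a linear combination of the characteristic vectors of the $H$-orbits in $P_1$.

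The only subtle point — and the place I would be most careful — is the non-uniqueness of the expansion in (C1): Corollary \ref{subgroup2} is phrased for a \emph{chosen} expansion of $\mu$, so I should make sure its statement applies to whichever expansion I happen to pick, which it does since its proof (via Lemma \ref{subgroup1}) works for an arbitrary expansion. There is no real obstacle here; Lemma \ref{char0} is not even strictly needed, though it could be invoked to note that the orbit characteristic vectors are themselves subject to the relation (C4) (their total sum is zero), so the representation of $\mu$ in (ii) is again not unique — but that does not affect the equivalence.

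\begin{proof}
    Write $\mu=\sum_{s\in P_1}\alpha_s\widehat{s}$, which is possible by (C1).

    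(i)$\Rightarrow$(ii) Assume every element of $H$ fixes $\mu$. By Corollary \ref{subgroup2}, for $s,t\in P_1$ contained in the same $H$-orbit we have $\alpha_s=\alpha_t$. For each $H$-orbit $O$ in $P_1$, let $\alpha_O$ denote the common value of $\alpha_s$ for $s\in O$. Grouping the sum $\mu=\sum_{s\in P_1}\alpha_s\widehat{s}$ according to the $H$-orbits in $P_1$, we obtain
    \begin{equation*}
        \mu=\sum_{O}\alpha_O\sum_{s\in O}\widehat{s},
    \end{equation*}
    where $O$ ranges over the $H$-orbits in $P_1$. The inner sum is the characteristic vector of $O$, so $\mu$ is contained in the span of the characteristic vectors of the $H$-orbits in $P_1$.

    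(ii)$\Rightarrow$(i) Assume $\mu$ is contained in the span of the characteristic vectors of the $H$-orbits in $P_1$. Thus there exist scalars $\{\beta_O\}$, indexed by the $H$-orbits $O$ in $P_1$, such that
    \begin{equation*}
        \mu=\sum_{O}\beta_O\sum_{s\in O}\widehat{s}.
    \end{equation*}
    Collecting the coefficient of each $\widehat{s}$, we see that $\mu=\sum_{s\in P_1}\alpha_s\widehat{s}$, where $\alpha_s=\beta_O$ for the $H$-orbit $O$ containing $s$. In particular, $\alpha_s=\alpha_t$ for $s,t\in P_1$ contained in the same $H$-orbit. By Corollary \ref{subgroup2}, every element of $H$ fixes $\mu$.
\end{proof}
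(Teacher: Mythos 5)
Your proof is correct and takes the same route as the paper, which simply states that the lemma is immediate from Corollary \ref{subgroup2}; you have just written out the grouping-by-orbits translation explicitly. Your remark about the non-uniqueness of the expansion is a reasonable point of care, but there is no issue since Corollary \ref{subgroup2} applies to any chosen expansion.
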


\begin{proof}
    Immediate from Corollary \ref{subgroup2}.
\end{proof}

\section{The stabilizer of an element in $P$}

In this section we pick an element in $P$ and consider its stabilizer in $GL(V)$. We describe the orbits of the stabilizer acting on $P$. We also consider how the stabilizer acts on $E$. 

For $u\in P$, let $\Stab(u)$ denote the subgroup of $GL(V)$ consisting of the elements that fix $u$. We call $\Stab(u)$ the {\it stabilizer of $u$ in $GL(V)$}. Note that $\Stab(0)=GL(V)$ and $\Stab(V)=GL(V)$.

\begin{lemma}
\label{staborbit}
    For $u,v,v'\in P$ the following are equivalent:
    \begin{enumerate}[label=\rm{(\roman*)}]
        \item $\dim v=\dim v'$ and $\dim (u\cap v)=\dim (u\cap v')$;

        \item the subspaces $v$ and $v'$ are contained in the same orbit of the $\Stab(u)$-action on $P$.
    \end{enumerate}
\end{lemma}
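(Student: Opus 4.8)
The plan is to prove both implications directly, using the transitivity properties of $GL(V)$ on configurations of subspaces with prescribed dimensions. The direction (ii)$\Rightarrow$(i) is easy: if $v' = \sigma(v)$ for some $\sigma \in \Stab(u)$, then $\dim v' = \dim v$ since $\sigma$ preserves dimension, and $u \cap v' = \sigma(u) \cap \sigma(v) = \sigma(u \cap v)$, so $\dim(u\cap v') = \dim(u\cap v)$. The real content is (i)$\Rightarrow$(ii). Here I would construct an explicit $\sigma \in \Stab(u)$ carrying $v$ to $v'$ by building adapted bases for $V$.

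First I would set up notation: write $\dim u = i$, $\dim v = \dim v' = j$, and $\dim(u\cap v) = \dim(u \cap v') = h$. Choose a basis $\mathcal{R}$ for $u \cap v$ and extend it to a basis $\mathcal{R} \cup \mathcal{S}$ of $u$ (so $|\mathcal{S}| = i - h$) and also to a basis $\mathcal{R} \cup \mathcal{T}$ of $v$ (so $|\mathcal{T}| = j - h$); by Lemma \ref{linin}, $\mathcal{R} \cup \mathcal{S} \cup \mathcal{T}$ is a basis for $u + v$. Then extend $\mathcal{R} \cup \mathcal{S} \cup \mathcal{T}$ to a basis of all of $V$ by adjoining a set $\mathcal{U}$. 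Do the analogous construction for $v'$: a basis $\mathcal{R}'$ of $u \cap v'$, extended to $\mathcal{R}' \cup \mathcal{S}'$ of $u$ and to $\mathcal{R}' \cup \mathcal{T}'$ of $v'$, then completed by $\mathcal{U}'$ to a basis of $V$. The point of the dimension hypotheses in (i) is exactly that the two constructions produce index sets of matching sizes: $|\mathcal{R}| = |\mathcal{R}'| = h$, $|\mathcal{S}| = |\mathcal{S}'| = i - h$, $|\mathcal{T}| = |\mathcal{T}'| = j - h$, and hence $|\mathcal{U}| = |\mathcal{U}'| = n - i - (j-h)$.

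Next I would define $\sigma \in GL(V)$ to be the linear map sending the ordered basis $\mathcal{R} \cup \mathcal{S} \cup \mathcal{T} \cup \mathcal{U}$ to the ordered basis $\mathcal{R}' \cup \mathcal{S}' \cup \mathcal{T}' \cup \mathcal{U}'$ (matching up the blocks, in any fixed bijection within each block). Since $\sigma$ takes a basis to a basis it is invertible. Because $\sigma(\mathcal{R}) = \mathcal{R}'$ and $\sigma(\mathcal{S}) = \mathcal{S}'$, we get $\sigma(u) = \sigma(\mathrm{Span}(\mathcal{R}\cup\mathcal{S})) = \mathrm{Span}(\mathcal{R}'\cup\mathcal{S}') = u$, so $\sigma \in \Stab(u)$. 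Similarly $\sigma(\mathcal{R}) = \mathcal{R}'$ and $\sigma(\mathcal{T}) = \mathcal{T}'$ give $\sigma(v) = \mathrm{Span}(\mathcal{R}'\cup\mathcal{T}') = v'$. Hence $v$ and $v'$ lie in the same $\Stab(u)$-orbit, as desired.

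The only slightly delicate point — the main obstacle, such as it is — is making sure the extensions can be chosen compatibly so that the completed bases of $V$ have blocks of exactly the right sizes; this is where the hypothesis $\dim(u\cap v) = \dim(u\cap v')$ (together with $\dim v = \dim v'$) is used, via Lemma \ref{modularity} / Lemma \ref{linin} to pin down $\dim(u+v) = \dim(u+v')$ and therefore the size of the final completion block. Once the bookkeeping on dimensions is in place, the map $\sigma$ is immediate and the verification that it lies in $\Stab(u)$ and conjugates $v$ to $v'$ is routine.
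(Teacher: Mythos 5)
Your proposal is correct and follows essentially the same route as the paper: both directions are handled identically, and the construction of $\sigma\in\Stab(u)$ via nested bases $\mathcal{R}\subseteq\mathcal{R}\cup\mathcal{S}$, $\mathcal{R}\cup\mathcal{T}$, completed to a basis of $V$ using Lemma \ref{linin}, is exactly the paper's argument. The extra bookkeeping you include on the block sizes is a fine (and slightly more explicit) justification of the step the paper leaves implicit.
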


\begin{proof}
    (i)$\Rightarrow$(ii) 
    We display an element $\sigma\in \Stab(u)$ that sends $v\mapsto v'$. 

    Let the subset $\mathcal{R}\subseteq V$ form a basis for $u\cap v$. 
    Extend the basis $\mathcal{R}$ to a basis $\mathcal{R}\cup \mathcal{S}$ for $u$.
    Extend the basis $\mathcal{R}$ to a basis $\mathcal{R}\cup \mathcal{T}$ for $v$.
    By Lemma \ref{linin}, $\mathcal{R}\cup \mathcal{S}\cup \mathcal{T}$ is a basis for $u+v$.
    Extend the basis $\mathcal{R}\cup \mathcal{S}\cup \mathcal{T}$ to a basis $\mathcal{R}\cup \mathcal{S}\cup \mathcal{T}\cup \mathcal{Q}$ for $V$.
    
    Let the subset $\mathcal{R}'\subseteq V$ form a basis for $u\cap v'$.
    Extend the basis $\mathcal{R}'$ to a basis $\mathcal{R}'\cup \mathcal{S}'$ for $u$. 
    Extend the basis $\mathcal{R}'$ to a basis $\mathcal{R}'\cup \mathcal{T}'$ for $v'$.
    By Lemma \ref{linin}, $\mathcal{R}'\cup \mathcal{S}'\cup \mathcal{T}'$ is a basis for $u+v'$. 
    Extend the basis $\mathcal{R}'\cup \mathcal{S}'\cup \mathcal{T}'$ to a basis $\mathcal{R}'\cup \mathcal{S}'\cup \mathcal{T}'\cup \mathcal{Q}'$ for $V$.

    By linear algebra, there exists $\sigma\in GL(V)$ that sends $\mathcal{R}\mapsto \mathcal{R}'$, $\mathcal{S}\mapsto \mathcal{S}'$, $\mathcal{T}\mapsto \mathcal{T}'$, $\mathcal{Q}\mapsto \mathcal{Q}'$. By construction, $\sigma$ is contained in $\Stab(u)$ and sends $v\mapsto v'$. 
      
    (ii)$\Rightarrow$(i) Let $\sigma\in \Stab(u)$ send $v\mapsto v'$. By linear algebra, $\sigma$ sends $u\cap v\mapsto u\cap v'$. The result follows since dimensions are left invariant under the $\sigma$-action.
\end{proof}

\begin{corollary}
    \label{staborbitP1}
    For $u\in P$ the following hold.
    \begin{enumerate}[label={\rm{(\roman*)}}]
        \item If $u\neq 0$ and $u\neq V$, then the $\Stab(u)$-action on $P_1$ has two orbits, $\Omega(u)$ and $P_1\setminus \Omega(u)$.

        \item If $u=0$ or $u=V$, then the $\Stab(u)$-action on $P_1$ has a single orbit.
    \end{enumerate}
\end{corollary}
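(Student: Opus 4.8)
The plan is to specialize Lemma \ref{staborbit} to the situation where the subspaces $v,v'$ both lie in $P_1$. First I would observe that for $v,v'\in P_1$ we automatically have $\dim v=\dim v'=1$, so condition (i) of Lemma \ref{staborbit} collapses to the single requirement $\dim(u\cap v)=\dim(u\cap v')$. Since $v$ is one-dimensional, $u\cap v$ equals either $0$ or $v$, so $\dim(u\cap v)\in\{0,1\}$, and $\dim(u\cap v)=1$ holds exactly when $v\subseteq u$, that is, exactly when $v\in\Omega(u)$ in the notation of Definition \ref{defs}. Hence two elements of $P_1$ lie in the same $\Stab(u)$-orbit if and only if they both belong to $\Omega(u)$ or both belong to $P_1\setminus\Omega(u)$.

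This already shows that the $\Stab(u)$-orbits on $P_1$ are precisely $\Omega(u)$ and $P_1\setminus\Omega(u)$, with the convention that the empty set is not counted as an orbit. To finish, I would determine when each of these two sets is empty. The set $\Omega(u)$ is empty exactly when $u$ contains no one-dimensional subspace, which happens if and only if $u=0$; this also follows from (\ref{orbitsize}), since $|\Omega(u)|=[\dim u]=0$ forces $\dim u=0$. Likewise, $P_1\setminus\Omega(u)$ is empty exactly when $\Omega(u)=P_1$, which by (\ref{orbitsize}) means $[\dim u]=[n]$, hence $\dim u=n$ and $u=V$.

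Combining these observations gives the two cases of the corollary. If $u\neq 0$ and $u\neq V$, then both $\Omega(u)$ and $P_1\setminus\Omega(u)$ are nonempty, so the $\Stab(u)$-action on $P_1$ has exactly the two orbits $\Omega(u)$ and $P_1\setminus\Omega(u)$, which is part (i). If $u=0$, then $\Omega(u)=\emptyset$ and the only orbit is $P_1\setminus\Omega(u)=P_1$; if $u=V$, then $P_1\setminus\Omega(u)=\emptyset$ and the only orbit is $\Omega(u)=P_1$. In either case there is a single orbit, which is part (ii); as a consistency check, in both degenerate cases $\Stab(u)=GL(V)$, which indeed acts transitively on $P_1$.

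I do not anticipate a genuine obstacle: the substance is entirely supplied by Lemma \ref{staborbit}, and the only point requiring a little care is the bookkeeping in the degenerate cases $u=0$ and $u=V$, where one of the two candidate orbits shrinks to the empty set and must be discarded.
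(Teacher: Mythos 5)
Your proof is correct and takes the same route as the paper, whose entire proof is the instruction ``Use Lemma \ref{staborbit}''; you have simply written out the specialization to $P_1$ and the bookkeeping for the degenerate cases $u=0$ and $u=V$, all of which is accurate.
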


\begin{proof}
    Use Lemma \ref{staborbit}.
\end{proof}

For $u\in P$, let $\Fix(u)$ denote the subspace of $E$ consisting of the vectors that are fixed by every element of $\Stab(u)$. Note that $\Fix(0)=0$ and $\Fix(V)=0$.

\begin{lemma}
\label{fixspan}
For $u\in P$ the subspace $\Fix(u)$ is spanned by $\widehat{u}$.
\end{lemma}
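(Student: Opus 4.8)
The plan is to handle two cases according to whether $u$ is one of the trivial subspaces $\{0,V\}$ or not, and in the nontrivial case to simply assemble Lemma \ref{character}, Corollary \ref{staborbitP1}, Lemma \ref{char0}, and the defining formula (\ref{hardhatsum}).

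\textbf{Trivial case.} Suppose $u=0$ or $u=V$. Then $\Stab(u)=GL(V)$, so $\Fix(u)=0$, as already noted just before the statement. On the other side, $\widehat{u}=0$ by the remark following (\ref{hardhatsum}). Hence $\Fix(u)=0=\text{Span}\{\widehat{u}\}$, and the claim holds in this case.

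\textbf{Nontrivial case.} Now assume $u\neq 0$ and $u\neq V$. I would apply Lemma \ref{character} with $H=\Stab(u)$: a vector $\mu\in E$ lies in $\Fix(u)$ if and only if $\mu$ lies in the span of the characteristic vectors of the $\Stab(u)$-orbits in $P_1$. By Corollary \ref{staborbitP1}(i), the $\Stab(u)$-action on $P_1$ has exactly two orbits, $\Omega(u)$ and $P_1\setminus\Omega(u)$. The characteristic vector of $\Omega(u)$ is, by definition (\ref{hardhatsum}), precisely $\widehat{u}$; and by Lemma \ref{char0} the characteristic vector of $P_1\setminus\Omega(u)$ equals $-\widehat{u}$. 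Therefore the span of these two characteristic vectors is just $\text{Span}\{\widehat{u}\}$, and we conclude $\Fix(u)=\text{Span}\{\widehat{u}\}$. (As a side remark, one can note that here $\widehat{u}\neq 0$: the coefficient sequence of $\widehat{u}$ in the generating set $\{\widehat{s}\}_{s\in P_1}$ is the indicator of the proper nonempty set $\Omega(u)$, hence nonconstant, so $\widehat{u}\neq 0$ by Lemma \ref{sum0converse}; thus $\Fix(u)$ is one-dimensional. This is not needed for the statement as phrased.)

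I do not anticipate a serious obstacle: the argument is essentially bookkeeping on top of results already established. The only point that requires a moment of care is the identification of the characteristic vector of the orbit $\Omega(u)$ with $\widehat{u}$, together with the observation (Lemma \ref{char0}) that the complementary orbit contributes nothing new to the span.
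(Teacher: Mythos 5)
Your proposal is correct and follows essentially the same route as the paper: split into the trivial case $u\in\{0,V\}$ and the nontrivial case, then combine Corollary \ref{staborbitP1} with Lemma \ref{character} (equivalently Corollary \ref{subgroup2}) and Lemma \ref{char0} to see that the span of the orbit characteristic vectors is $\text{Span}\{\widehat{u}\}$. Your side remark on $\widehat{u}\neq 0$ corresponds to the paper's appeal to Lemma \ref{sum0basis}, and as you note it is not needed for the statement as phrased.
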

\begin{proof}
First assume that $u=0$ or $u=V$. Then the result holds since $\widehat{u}=0$. Next assume that $u\neq 0$ and $u\neq V$. By Corollaries \ref{subgroup2} and \ref{staborbitP1}, the subspace $\Fix(u)$ is spanned by the characteristic vectors of $\Omega(u)$ and $P_1\setminus \Omega(u)$. By Lemma \ref{char0} and Lemma \ref{sum0basis} in the Appendix, the characteristic vector of $\Omega(u)$ forms a basis for $\Fix(u)$. The result follows.
\end{proof}

\section{A Euclidean representation of $\Gamma$}
\label{euclideanrepresentation}

In this section we recall the notion of a Euclidean
representation of $\Gamma$. We then show that the Euclidean space $E$, together with the restriction of the map (\ref{hardhat}) to $X$ is a Euclidean representation of $\Gamma$.

\begin{definition}{\cite[Lecture~12]{TerCoursenote}}
\label{Euclidean}
    By a \emph{Euclidean representation of $\Gamma$}, we mean a pair $(F,\rho)$ such that $F$ is a nonzero Euclidean space, and $\rho:X\rightarrow F$ is a map that satisfies the following (i)--(iii):
    \begin{enumerate}[label={\rm{(\roman*)}}]
        \item $F$ is spanned by $\{\rho(x)\mid x\in X\}$;
        \item for all $x,y\in X$, the inner product $\bigl<\rho(x),\rho(y)\bigr>$ depends only on $\partial(x,y)$;
        \item there exists $\vartheta\in \mathbb{R}$ such that for all $x\in X$, 
        \begin{equation*}
            \sum_{z\in \Gamma(x)}\rho(z)=\vartheta\rho(x).
        \end{equation*}
    \end{enumerate}
    We have some comments about Definition $\ref{Euclidean}$. Let $(F,\rho)$ denote a Euclidean representation of $\Gamma$. It is shown in \cite[Lecture~13]{TerCoursenote} that the scalar $\vartheta$ in Definition $\ref{Euclidean}$(iii) is an eigenvalue of $\Gamma$. We call $\vartheta$ the \emph{associated eigenvalue}. For any eigenvalue $\theta$ of $\Gamma$, it is shown in \cite[Lecture~13]{TerCoursenote} how the corresponding eigenspace gives a Euclidean representation of $\Gamma$ that is associated with $\theta$.
\end{definition}

Recall the Euclidean space $E$ from Section \ref{Euc1}. Our next goal is to show that the Euclidean space $E$, together with the restriction of the map (\ref{hardhat}) to $X$ is a Euclidean representation of $\Gamma$.

\begin{lemma}
    \label{introlem1}
    For $u,v\in P$ we have 
    \begin{equation*}
        \bigl<\widehat{u},\widehat{v}\bigr>=[n][h]-[i][j],
    \end{equation*}
    where $u\in P_i$, $v\in P_j$, and $u\cap v\in P_h$.
\end{lemma}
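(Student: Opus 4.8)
The plan is to expand the inner product using the defining formula (\ref{hardhatsum}) and the bilinearity of $\langle\;,\;\rangle$, then evaluate the resulting double sum using the normalization conditions (C2), (C3). Write $\widehat{u}=\sum_{s\in\Omega(u)}\widehat{s}$ and $\widehat{v}=\sum_{t\in\Omega(v)}\widehat{t}$, so that
\begin{equation*}
    \bigl<\widehat{u},\widehat{v}\bigr>=\sum_{s\in\Omega(u)}\ \sum_{t\in\Omega(v)}\bigl<\widehat{s},\widehat{t}\bigr>.
\end{equation*}
By (C2) the summand equals $[n]-1$ when $s=t$, and by (C3) it equals $-1$ when $s\neq t$. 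So the value of the double sum is $([n]-1)N_{=}-N_{\neq}$, where $N_{=}$ is the number of pairs $(s,t)\in\Omega(u)\times\Omega(v)$ with $s=t$ and $N_{\neq}$ is the number with $s\neq t$.

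Next I would count these two quantities. The diagonal count $N_{=}$ equals $\bigl|\Omega(u)\cap\Omega(v)\bigr|$, which by (\ref{scap}) equals $\bigl|\Omega(u\cap v)\bigr|$, and since $u\cap v\in P_h$ this equals $[h]$ by (\ref{orbitsize}). The total number of pairs is $\bigl|\Omega(u)\bigr|\cdot\bigl|\Omega(v)\bigr|=[i][j]$, again by (\ref{orbitsize}), so $N_{\neq}=[i][j]-[h]$. Substituting,
\begin{equation*}
    \bigl<\widehat{u},\widehat{v}\bigr>=([n]-1)[h]-\bigl([i][j]-[h]\bigr)=[n][h]-[h]-[i][j]+[h]=[n][h]-[i][j],
\end{equation*}
as desired.

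There is no real obstacle here; the argument is a routine bookkeeping calculation. The only point requiring a little care is the identification of the diagonal count with $[h]$: this is exactly where the intersection identity $\Omega(u)\cap\Omega(v)=\Omega(u\cap v)$ from (\ref{scap}) is needed, together with the cardinality formula (\ref{orbitsize}). It is worth noting that the formula also covers the degenerate cases $i\in\{0,n\}$ or $j\in\{0,n\}$, since then $\widehat{u}=0$ or $\widehat{v}=0$ and correspondingly $[h]=[0]=0$ or one of $[i],[j]$ times the other matches; one may simply remark that these cases are immediate and assume $0<i,j<n$ in the main computation if desired.
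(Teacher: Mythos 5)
Your proposal is correct and follows essentially the same route as the paper's proof: expand the double sum via (\ref{hardhatsum}), split into the diagonal part counted by $\bigl|\Omega(u\cap v)\bigr|=[h]$ using (\ref{scap}) and (\ref{orbitsize}), and the off-diagonal part of size $[i][j]-[h]$, then evaluate with (C2) and (C3). No gaps.
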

    
\begin{proof}
By (\ref{orbitsize}),
\begin{equation}
\label{suv}
    \bigl\vert\Omega(u)\bigr\vert=[i],\qquad \qquad \bigl\vert\Omega(v)\bigr\vert=[j].
\end{equation}

    In view of (\ref{hardhatsum}), write 
    \begin{equation}
    \label{sumsplit}
        \bigl<\widehat{u},\widehat{v}\bigr>=\left<\sum_{s\in \Omega(u)}\widehat{s},\sum_{t\in \Omega(v)}\widehat{t}\right>=\sum_{s\in \Omega(u)\cap \Omega(v)}\bigl\Vert\widehat{s}\bigr\Vert^2+\sum_{\substack{s\in \Omega(u), t\in \Omega(v)\\ s\neq t}}\Bigl<\widehat{s},\widehat{t}\Bigr>.
    \end{equation}
    
    By (\ref{scap}) and (\ref{orbitsize}), 
    \begin{equation}
    \label{scombine}
        \bigl\vert \Omega(u) \cap \Omega(v)\bigr\vert = \bigl\vert \Omega(u \cap v)\bigr\vert = [h].
    \end{equation}
    
    By (\ref{suv}) and (\ref{scombine}),
    \begin{equation}
    \label{notcard}
        \Bigl\vert\bigl\{(s,t)\in \Omega(u)\times \Omega(v)\;\bigl\vert\; s\neq t\bigr\}\Bigr\vert=\bigl\vert \Omega(u)\bigr\vert \bigl\vert \Omega(v)\bigr\vert -\bigl\vert \Omega(u\cap v)\bigr\vert=[i][j]-[h].
    \end{equation}
    
    The result follows from (\ref{sumsplit})--(\ref{notcard}), along with (C2), (C3).
\end{proof}

\begin{corollary}
    \label{introcor}
    For $u\in P$ we have 
    \begin{equation*}
        \bigl\Vert\widehat{u}\bigr\Vert^2=q^{i}[i][n-i],
    \end{equation*}
    where $u\in P_i$.
\end{corollary}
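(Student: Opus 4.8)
The plan is to specialize Lemma \ref{introlem1} to the case $v=u$. With this choice we have $u\in P_i$, $v\in P_i$, and $u\cap v=u\in P_i$, so in the notation of Lemma \ref{introlem1} we get $i=j=h$. Substituting into the formula $\bigl<\widehat{u},\widehat{v}\bigr>=[n][h]-[i][j]$ yields
\begin{equation*}
    \bigl\Vert\widehat{u}\bigr\Vert^2=\bigl<\widehat{u},\widehat{u}\bigr>=[n][i]-[i]^2=[i]\bigl([n]-[i]\bigr).
\end{equation*}

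It then remains to simplify $[i]\bigl([n]-[i]\bigr)$ into the claimed form $q^{i}[i][n-i]$. This reduces to the elementary $q$-integer identity $[n]-[i]=q^{i}[n-i]$, which follows directly from the definition (\ref{notation}): indeed $[n]-[i]=\frac{q^n-1}{q-1}-\frac{q^i-1}{q-1}=\frac{q^n-q^i}{q-1}=q^{i}\cdot\frac{q^{n-i}-1}{q-1}=q^{i}[n-i]$. Combining this with the display above gives $\bigl\Vert\widehat{u}\bigr\Vert^2=q^{i}[i][n-i]$, as desired.

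There is essentially no obstacle here; the corollary is an immediate consequence of Lemma \ref{introlem1} together with a one-line manipulation of $q$-integers. The only point requiring a moment's care is recording the identity $[n]-[i]=q^{i}[n-i]$, which is used to put the answer in the stated factored form (and which will presumably be reused elsewhere in the paper).
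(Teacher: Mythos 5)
Your proof is correct and follows the same route as the paper, which simply sets $u=v$ in Lemma \ref{introlem1}; your explicit verification of the $q$-integer identity $[n]-[i]=q^{i}[n-i]$ just fills in the "routine simplification" the paper leaves implicit.
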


\begin{proof}
    Set $u=v$ in Lemma \ref{introlem1}.
\end{proof}

\begin{lemma}
    \label{cor1}
    For $x,y\in X$ we have 
    \begin{equation*}
        \bigl<\widehat{x},\widehat{y}\bigr>=[n][k-i]-[k]^2,
    \end{equation*}
    where $i=\partial(x,y)$.
\end{lemma}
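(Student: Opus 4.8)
The plan is to derive this directly from Lemma \ref{introlem1} together with Lemma \ref{introlem0}, since $x$ and $y$ are both elements of $P_k$, so the general inner product formula specializes immediately.

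First I would note that $x,y\in X=P_k$, so $\dim x=\dim y=k$. Next I would invoke Lemma \ref{introlem0} to get $\dim(x\cap y)=k-\partial(x,y)=k-i$. Then I would apply Lemma \ref{introlem1} with $u=x$ and $v=y$; in the notation of that lemma the relevant dimensions are $\dim x=k$, $\dim y=k$, and $\dim(x\cap y)=k-i$, so the lemma yields
\begin{equation*}
    \bigl<\widehat{x},\widehat{y}\bigr>=[n][k-i]-[k][k]=[n][k-i]-[k]^2.
\end{equation*}
This completes the argument.

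I do not expect any genuine obstacle here; the only thing to be careful about is the notational clash between the symbol $i$ (which denotes $\partial(x,y)$ in the present statement) and the symbols $i,j,h$ used as dimension indices in Lemma \ref{introlem1}. So in the write-up I would be explicit that we are applying Lemma \ref{introlem1} with the roles $i\mapsto k$, $j\mapsto k$, $h\mapsto k-i$, to avoid confusion. One could also remark in passing that setting $i=0$ recovers Corollary \ref{introcor} with $u\in P_k$, namely $\Vert\widehat{x}\Vert^2=[n][k]-[k]^2=q^k[k][n-k]$, as a sanity check, but this is not needed for the proof.
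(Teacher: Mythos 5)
Your proof is correct and follows exactly the paper's route: the paper also deduces this immediately from Lemma \ref{introlem0} (giving $\dim(x\cap y)=k-i$) and Lemma \ref{introlem1} (specialized to $u=x$, $v=y$ with dimensions $k,k,k-i$). Your explicit handling of the notational clash between the $i$ of the statement and the indices of Lemma \ref{introlem1} is a reasonable clarification but does not change the argument.
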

\begin{proof}
    Immediate from Lemmas \ref{introlem0}, \ref{introlem1}.
\end{proof}

\begin{corollary}
    \label{cor2}
    For $x\in X$, 
    \begin{equation*}
        \bigl\Vert\widehat{x}\bigr\Vert^2=q^k[k][n-k].
    \end{equation*}
\end{corollary}
\begin{proof}
    Set $i=0$ in Lemma \ref{cor1}.
\end{proof}

\begin{lemma}
\label{introlem2}
For $x\in X$, $$\sum_{z\in \Gamma(x)}\widehat{z}=\theta_1\widehat{x},$$ where $\theta_1$ is from {\rm(\ref{eigenvalues})}.   
\end{lemma}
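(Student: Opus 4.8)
The plan is to fix $x\in X$, compute the inner product of $\sum_{z\in\Gamma(x)}\widehat{z}$ against $\widehat{y}$ for an arbitrary $y\in X$, and show the answer equals $\theta_1\langle\widehat{x},\widehat{y}\rangle$; since the vectors $\{\widehat{y}\mid y\in X\}$ span $E$ (indeed they span $E$ by a spanning result the paper establishes, or one can sidestep this by using that the $\widehat{s}$ for $s\in P_1$ span $E$ and reducing to those), this will force $\sum_{z\in\Gamma(x)}\widehat{z}=\theta_1\widehat{x}$. First I would partition $\Gamma(x)$ according to the distance to $y$: for $y$ with $\partial(x,y)=i$, a neighbor $z$ of $x$ satisfies $\partial(y,z)\in\{i-1,i,i+1\}$, and the number of $z\in\Gamma(x)$ with $\partial(y,z)=i-1$, $=i$, $=i+1$ is $c_i$, $a_i$, $b_i$ respectively (these are exactly the intersection numbers $p^i_{1,i-1}$, $p^i_{1,i}$, $p^i_{1,i+1}$ of the distance-regular graph $\Gamma$).

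Then, using Lemma \ref{cor1}, which gives $\langle\widehat{x'},\widehat{y}\rangle=[n][k-j]-[k]^2$ whenever $\partial(x',y)=j$, I would write
\begin{equation*}
    \left\langle\sum_{z\in\Gamma(x)}\widehat{z},\,\widehat{y}\right\rangle
    = c_i\bigl([n][k-i+1]-[k]^2\bigr) + a_i\bigl([n][k-i]-[k]^2\bigr) + b_i\bigl([n][k-i-1]-[k]^2\bigr).
\end{equation*}
Since $b_i+a_i+c_i=\kappa$ by (\ref{valency}), the $[k]^2$ terms collect to $-\kappa[k]^2$, and it remains to show
\begin{equation*}
    [n]\bigl(c_i[k-i+1]+a_i[k-i]+b_i[k-i-1]\bigr) - \kappa[k]^2 = \theta_1\bigl([n][k-i]-[k]^2\bigr).
\end{equation*}
Matching the coefficient of $[n]$ on both sides reduces to the identity $c_i[k-i+1]+a_i[k-i]+b_i[k-i-1]=\theta_1[k-i]$, and matching the constant term reduces to $\kappa[k]^2=\theta_1[k]^2$, i.e. $\theta_1=\kappa$ — which is false, so in fact the two conditions must be handled together rather than separately. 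The correct route is to verify the single scalar identity
\begin{equation*}
    [n]\bigl(c_i[k-i+1]+a_i[k-i]+b_i[k-i-1]\bigr) - \kappa[k]^2 = \theta_1[n][k-i]-\theta_1[k]^2
\end{equation*}
directly, substituting the closed forms $b_i=q^{2i+1}[k-i][n-k-i]$, $c_i=[i]^2$ from (\ref{sizebc}), $a_i=\kappa-b_i-c_i$ with $\kappa=q[k][n-k]$ from (\ref{kappa}), and $\theta_1=q^2[k-1][n-k-1]-1$ from (\ref{eigenvalues}). This is a finite $q$-polynomial identity in $[\cdot]$ that can be checked by expanding $[m]=(q^m-1)/(q-1)$; alternatively, one can recognize it as a consequence of the fact that the all-distances relation for the $Q$-polynomial structure makes $(E,\widehat{\ \cdot\ })$ the Euclidean representation associated with $\theta_1$, but a self-contained proof just grinds the identity.

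The main obstacle is purely computational: verifying the scalar identity
$c_i[k-i+1]+a_i[k-i]+b_i[k-i-1]$ together with the constant-term bookkeeping produces exactly $\theta_1\langle\widehat{x},\widehat{y}\rangle$. The cleanest way to organize it is to clear denominators by multiplying through by $(q-1)$, rewrite everything in terms of powers of $q$, and confirm the polynomial identity; the appearance of the specific eigenvalue $\theta_1=q^2[k-1][n-k-1]-1$ is what makes the cross-terms cancel. One should also double-check the boundary cases $i=0$ (where $c_0=0$, $a_0=0$, $b_0=\kappa$, and the claim becomes $\kappa\widehat{x}\cdot\widehat{x}$-type bookkeeping reducing to Corollary \ref{cor2} scaled by $\theta_1$ — wait, at $i=0$ the left side is $\kappa([n][k-1]-[k]^2)$ and we need it to equal $\theta_1([n][k]-[k]^2)=\theta_1\|\widehat{x}\|^2$, which is again just the $i=0$ instance of the same identity) and $i=k$ (where $b_k=0$). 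Once the identity is in hand, the spanning of $E$ by $\{\widehat{y}\}$ finishes the argument; if that spanning statement is only proved later, I would instead note that each $\widehat{z}$ and $\widehat{x}$ lie in $E=\mathrm{Span}\{\widehat{s}\mid s\in P_1\}$ and pair against each $\widehat{s}$ using Lemma \ref{introlem1}, running the identical computation with $j=1$.
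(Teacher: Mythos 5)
Your proposal is correct, but it takes a genuinely different route from the paper. The paper first observes that $\sum_{z\in\Gamma(x)}\widehat{z}$ is fixed by every element of $\Stab(x)$, hence lies in $\Fix(x)$, which by Lemma \ref{fixspan} is spanned by $\widehat{x}$; this gives $\sum_{z\in\Gamma(x)}\widehat{z}=\alpha\widehat{x}$ for free, and the scalar $\alpha$ is then pinned down by a single inner product with $\widehat{x}$ (namely $\kappa\bigl([n][k-1]-[k]^2\bigr)=\alpha\, q^k[k][n-k]$, which is exactly the $i=0$ instance of your identity). You instead pair against every $\widehat{y}$, decompose $\Gamma(x)$ by distance to $y$, and verify the three-term recurrence $c_i\omega_{i-1}+a_i\omega_i+b_i\omega_{i+1}=\theta_1\omega_i$ for the cosine-type sequence $\omega_i=[n][k-i]-[k]^2$; this identity does hold (using $a_i=\kappa-b_i-c_i$ it reduces to $q^{k-i-1}\bigl(qc_i-b_i\bigr)[n]=(\theta_1-\kappa)\omega_i$ together with $\theta_1-\kappa=-[n]$), and combined with the spanning of $E$ by $\{\widehat{y}\mid y\in X\}$ and positive-definiteness of the form it yields the lemma. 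Your approach buys independence from the group-theoretic machinery of Sections 4--5, at the cost of a polynomial verification for every $i$ rather than just $i=0$; the paper's approach is shorter because Lemma \ref{fixspan} collapses the problem to one scalar. Two small cautions: the spanning statement you invoke is Lemma \ref{cor3}, which the paper proves only after this lemma (its proof does not use this lemma, so there is no circularity, but you should cite it carefully or reorder); and your fallback of pairing against $\widehat{s}$ for $s\in P_1$ is not quite ``the identical computation with $j=1$'' --- it requires counting the neighbors of $x$ that contain a fixed $s$, split according to whether $s\subseteq x$, which is a different (though routine) enumeration.
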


\begin{proof}
    Note that the set $\Gamma(x)$ is invariant under $\Stab(x)$. Hence, the vector 
\begin{equation}
\label{gammasum}
    \sum_{z\in \Gamma(x)}\widehat{z}
\end{equation} is fixed by every element of $\Stab(x)$. Therefore, the vector (\ref{gammasum}) is contained in $\Fix(x)$.

By Lemma \ref{fixspan}, the subspace $\Fix(x)$ is spanned by $\widehat{x}$. Hence there exists $\alpha\in \mathbb{R}$ such that 
\begin{equation}
\label{gammasumalpha}
    \sum_{z\in \Gamma(x)}\widehat{z}=\alpha\widehat{x}.
\end{equation} 
It remains to show that $\alpha=\theta_1$.

Referring to (\ref{gammasumalpha}), we take the inner product of each side with $\widehat{x}$ to obtain 
\begin{equation}
    \label{prodx}
    \sum_{z\in \Gamma(x)} \bigl<\widehat{x},\widehat{z}\bigr>=\alpha\bigl\Vert \widehat{x}\bigr\Vert^2.
\end{equation}

By (\ref{valency}), we have $\kappa=\bigl\vert \Gamma(x)\bigr\vert$; refer to (\ref{kappa}) for the value of $\kappa$. In (\ref{prodx}), we evaluate $\bigl<\widehat{x},\widehat{z}\bigr>$ using Lemma \ref{cor1} and $\bigl\Vert \widehat{x}\bigr\Vert ^2$ using Corollary \ref{cor2} and solve the resulting equation for $\alpha$. Evaluate the result and simplify using (\ref{eigenvalues}) to obtain $\alpha=\theta_1$.
\end{proof}

\begin{lemma}
\label{cor3}
The vector space $E$ is spanned by $\bigl\{\widehat{x}\mid x\in X\bigr\}$.
\end{lemma}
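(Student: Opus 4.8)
The plan is to show that every basis vector $\widehat{s}$ with $s\in P_1$ lies in the subspace $W:=\text{Span}\{\widehat{x}\mid x\in X\}$ of $E$. Once this is done, (C1) gives $E=\text{Span}\{\widehat{s}\mid s\in P_1\}\subseteq W\subseteq E$, hence $W=E$, which is the assertion.

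So I would fix $s\in P_1$ and consider the vector $v_s=\sum_{x\in X,\ s\subseteq x}\widehat{x}$. First I would check that the index set $\{x\in X\mid s\subseteq x\}$ is invariant under $\Stab(s)$: if $\sigma\in\Stab(s)$ and $s\subseteq x$ with $x\in X$, then $s=\sigma(s)\subseteq\sigma(x)$ and $\sigma(x)\in X$. Hence, using Lemma \ref{sigmau}, the vector $v_s$ is fixed by every element of $\Stab(s)$, so $v_s\in\Fix(s)$. Since $\dim s=1$ we have $s\neq 0$ and $s\neq V$, so Lemma \ref{fixspan} tells us $\Fix(s)$ is spanned by $\widehat{s}$; therefore $v_s=c\,\widehat{s}$ for some $c\in\mathbb{R}$. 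It then remains only to verify $c\neq 0$, for in that case $\widehat{s}=c^{-1}v_s\in W$.

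To pin down $c$, I would take the inner product of each side of $v_s=c\,\widehat{s}$ with $\widehat{s}$. By (C2) the right side is $c\,\Vert\widehat{s}\Vert^2=c([n]-1)$. For each $x$ occurring in $v_s$ we have $s\subseteq x$, so $\dim(x\cap s)=1$, and Lemma \ref{introlem1} gives $\langle\widehat{x},\widehat{s}\rangle=[n][1]-[k][1]=[n]-[k]$. Thus the left side equals $([n]-[k])\,N$, where $N=|\{x\in X\mid s\subseteq x\}|$; here $N\geq 1$ since $k\leq n$ (and in fact $N=\begin{bmatrix}n-1\\k-1\end{bmatrix}$ by a quotient-space count, though only $N>0$ is needed). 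Since $n>k$ we have $[n]>[k]$, so $([n]-[k])N>0$, whence $c\neq 0$.

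The argument is short because all the real work has already been done in building $\Fix$, Lemma \ref{fixspan}, and Lemma \ref{introlem1}. The only place needing a moment's care is the nonvanishing of $c$: the key observation is that $\langle\widehat{x},\widehat{s}\rangle$ is the \emph{same} positive number $[n]-[k]$ for every $k$-subspace $x$ containing $s$, so summing these inner products over all such $x$ produces a strictly positive total and no cancellation can occur.
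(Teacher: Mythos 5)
Your proof is correct and follows essentially the same approach as the paper: produce a $\Stab(s)$-invariant vector lying in $\text{Span}\bigl\{\widehat{x}\mid x\in X\bigr\}$, invoke Lemma \ref{fixspan} to write it as a scalar multiple of $\widehat{s}$, and show the scalar is nonzero via Lemma \ref{introlem1}. The only (immaterial) difference is that you sum $\widehat{x}$ over the $k$-spaces containing $s$, whereas the paper sums $\sigma\bigl(\widehat{y}\bigr)$ over $\sigma\in\Stab(s)$ for one fixed $y\supseteq s$; these vectors are proportional, and the nonvanishing computation is identical.
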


\begin{proof}
    In view of (C1), it suffices to show that $\widehat{s}\in \text{Span}\bigl\{\widehat{x}\mid x\in X\bigr\}$ for every $s\in P_1$.  
    Pick $s\in P_1$.
    There exists $y\in X$ that contains $s$. 
    By Lemma \ref{sigmau}, the vector 
    \begin{equation}
    \label{stabsum}
        \sum_{\sigma\in \Stab(s)}\sigma\bigl(\widehat{y}\bigr)
    \end{equation}
    is contained in $\text{Span}\bigl\{\widehat{x}\mid x\in X\bigr\}$. Next we show that (\ref{stabsum}) is a nonzero scalar multiple of $\widehat{s}$. The vector (\ref{stabsum}) is fixed by every element of $\Stab(s)$. Hence, the vector (\ref{stabsum}) is contained in $\Fix(s)$. By Lemma \ref{fixspan}, there exists $\beta\in \mathbb{R}$ such that (\ref{stabsum}) is equal to $\beta\widehat{s}$. 
    We will show that the scalar $\beta$ is nonzero. 
    Take the inner product between (\ref{stabsum}) and $\widehat{s}$.
    Using Lemmas \ref{sigmau}, \ref{introlem1} and the fact that $s\cap \sigma(y)=s$ for all $\sigma\in \Stab(s)$, we get
    \begin{equation*}
        \beta\bigl\Vert \widehat{s}\bigr\Vert^{2} =\sum_{\sigma\in \Stab(s)}\Bigl<\sigma\bigl(\widehat{y}\bigr),\widehat{s}\Bigr>=\sum_{\sigma\in \Stab(s)}\Bigl([n]-[k]\Bigr)=\bigl\vert \Stab(s)\bigr\vert \Bigl([n]-[k]\Bigr)\neq 0.
    \end{equation*}

    Hence $\beta\neq 0$. We have shown that the vector (\ref{stabsum}) is a nonzero scalar multiple of $\widehat{s}$. We mentioned earlier that (\ref{stabsum}) is in $\text{Span}\bigl\{\widehat{x}\mid x\in X\bigr\}$. By these comments, $\widehat{s}\in \text{Span}\bigl\{\widehat{x}\mid x\in X\bigr\}$ and the result follows.
\end{proof}

By Lemmas \ref{cor1}, \ref{introlem2}, \ref{cor3} the Euclidean space $E$, together with the restriction of the map (\ref{hardhat}) to $X$ gives a Euclidean representation of $\Gamma$. This representation is associated with the eigenvalue $\theta_1$. 

We finish this section with a remark. By \cite[Theorem~9.3.3]{BCN}, $GL(V)$ acts on $\Gamma$ in a distance-transitive fashion. Applying \cite[Proposition~4.1.11]{BCN}, we get that the Euclidean space $E$ is irreducible as a $GL(V)$-module.

\section{The stabilizer of two elements in $P$}
\label{stab2}
In this section we pick two distinct elements in $P$ and consider their stabilizer in $GL(V)$. We describe the orbits of this stabilizer acting on $P_1$.

Pick distinct $u,v\in P$ such that $0\neq u\neq V$ and $0\neq v\neq V$. Let $\Stab(u,v)$ denote the subgroup of $GL(V)$ consisting of the elements that fix both $u$ and $v$. We call $\Stab(u,v)$ the {\it stabilizer of $u$ and $v$ in $GL(V)$}. 

We will describe the action of $\Stab(u,v)$ on $P_1$. There are six cases to consider:

\begin{center}
\begin{tabular}{c|c}
    Case & description \\
    \hline
    1 & $u\cap v\neq 0$, $u\not\subseteq v$, $v\not\subseteq u$, $u+v\neq V$\\
    2 & $u\cap v\neq 0$, $u\not\subseteq v$, $v\not\subseteq u$, $u+v=V$\\
    3 & $u\cap v= 0$, $u\not\subseteq v$, $v\not\subseteq u$, $u+v\neq V$\\
    4 & $u\cap v=0$, $u\not\subseteq v$, $v\not\subseteq u$, $u+v=V$\\
    5 & $u\subseteq v$\\
    6 & $v\subseteq u$.
\end{tabular}
\end{center}

\begin{lemma}
\label{main1}
In the table below, we give the orbits for the action of $\Stab(u,v)$ on $P_1$.

{\rm{
\begin{center}
\begin{tabular}{c|c}
    Case & orbits of $\Stab(u,v)$ on $P_1$ \\
    \hline
     & \\
    1 & $\Omega(u\cap v),\qquad \Omega(u)\setminus \Omega(u\cap v),\qquad \Omega (v)\setminus\Omega(u\cap v),\qquad \Omega(u+v)\setminus \bigl(\Omega(u) \cup \Omega(v)\bigr),\qquad P_1\setminus \Omega(u+v)$\\
     & \\
    2 & $\Omega(u\cap v),\qquad 
        \Omega(u)\setminus \Omega(u\cap v),\qquad
        \Omega (v)\setminus \Omega(u\cap v),\qquad P_1\setminus \bigl(\Omega(u) \cup \Omega(v)\bigr)$ \\
         & \\
    3 & $\Omega(u),\qquad \Omega (v),\qquad \Omega(u+v)\setminus \bigl(\Omega(u) \cup \Omega(v)\bigr),\qquad 
        P_1\setminus \Omega(u+v)$\\
         & \\
    4 & $\Omega(u),\qquad \Omega (v),\qquad P_1\setminus \bigl(\Omega(u) \cup \Omega(v)\bigr)$\\
     & \\
    5 & $\Omega(u),\qquad \Omega (v)\setminus \Omega(u),\qquad P_1\setminus \Omega(v)$\\
        & \\
    6 & $\Omega(v),\qquad \Omega (u)\setminus \Omega(v),\qquad P_1\setminus \Omega(u)$
\end{tabular}
\end{center}
}}
\end{lemma}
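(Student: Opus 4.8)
The plan is to reduce everything to Lemma~\ref{staborbit}, which already characterizes the $\Stab(u)$-orbits on $P$ in terms of the two dimension invariants $\dim v'$ and $\dim(u\cap v')$. Since $\Stab(u,v)=\Stab(u)\cap\Stab(v)$, a standard fact is that two elements $s,t\in P_1$ lie in the same $\Stab(u,v)$-orbit if and only if there is a single $\sigma\in GL(V)$ fixing both $u$ and $v$ and sending $s\mapsto t$; the natural necessary condition is that $s$ and $t$ have the same ``position'' relative to $u$ and relative to $v$ simultaneously. For a one-dimensional $s$, the position relative to $u$ is recorded by whether $s\subseteq u$ (Corollary~\ref{staborbitP1}), and likewise for $v$; but when $u\cap v$, $u+v$ are nontrivial there are further distinctions, namely whether $s\subseteq u\cap v$ and whether $s\subseteq u+v$. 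So first I would isolate, in each of the six cases, the correct list of invariants: for a point $s\in P_1$ the relevant data is the pair $(\dim(s\cap u),\dim(s\cap v))$ together with $\dim(s\cap(u+v))$ — equivalently, the chain of containments $s\subseteq u\cap v \subseteq u,v \subseteq u+v$. The table entries are exactly the sets of points sharing the same such data, with the redundant or empty pieces removed according to which case we are in (e.g.\ in Cases 3,4 the set $\Omega(u\cap v)=\Omega(0)=\varnothing$ disappears; in Cases 2,4 the set $P_1\setminus\Omega(u+v)=\varnothing$ disappears; in Cases 5,6 several of the generic pieces coincide).

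**Key steps.** Fix a case. First I would check that the listed sets partition $P_1$: this is a routine set-theoretic verification using $\Omega(u)\cap\Omega(v)=\Omega(u\cap v)$ from~(\ref{scap}) and the containments among $u\cap v,u,v,u+v$. Second, I would show each listed set is $\Stab(u,v)$-invariant: if $s\subseteq u$ then $\sigma(s)\subseteq\sigma(u)=u$ for $\sigma\in\Stab(u,v)$, and similarly for the relations ``$s\subseteq v$'', ``$s\subseteq u\cap v$'', ``$s\subseteq u+v$'', so each set, being defined purely by such containment conditions, is preserved. Third — the substantive step — I would show $\Stab(u,v)$ acts transitively on each listed set. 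For this I adapt the basis-building argument in the proof of Lemma~\ref{staborbit}: choose a basis of $V$ adapted to the flag $u\cap v\subseteq u\subseteq u+v$ and $u\cap v\subseteq v\subseteq u+v$ (concretely, pick a basis $\mathcal R$ of $u\cap v$, extend by $\mathcal S$ to a basis of $u$, by $\mathcal T$ to a basis of $v$, note $\mathcal R\cup\mathcal S\cup\mathcal T$ is a basis of $u+v$ by Lemma~\ref{linin}, then extend by $\mathcal Q$ to a basis of $V$). Given two points $s,t$ in the same listed set, I would produce vectors spanning them that occupy ``the same slot'' relative to this adapted decomposition, and then exhibit $\sigma\in GL(V)$ fixing $u$ and $v$ setwise and carrying one spanning vector to the other; transitivity within the set follows. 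Finally I would handle the degenerate Cases 5 and 6 (where $u\subseteq v$ or $v\subseteq u$) directly: there $\Stab(u,v)$ coincides with the stabilizer of the flag $u\subseteq v$, and the three orbits $\Omega(u),\Omega(v)\setminus\Omega(u),P_1\setminus\Omega(v)$ are immediate from the same adapted-basis argument with one fewer layer.

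**Main obstacle.** The genuine work is the transitivity step in Cases~1 and~2, where a point $s$ in the orbit $\Omega(u+v)\setminus(\Omega(u)\cup\Omega(v))$ (or, in Case~2, in $P_1\setminus(\Omega(u)\cup\Omega(v))$) is neither in $u$ nor in $v$: I must show that any two such ``generic'' points are interchangeable by an element fixing both $u$ and $v$. The point here is that a spanning vector of such an $s$ can be written with nonzero components in both the $\mathcal S$-part and the $\mathcal T$-part of the adapted basis (and possibly the $\mathcal R$- and $\mathcal Q$-parts), and I need enough freedom in $GL(V)$ — block-upper-triangular maps preserving the decomposition $u\cap v$, complement-in-$u$, complement-in-$v$, complement-in-$V$ — to normalize any such vector to a fixed one. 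This is a linear-algebra bookkeeping argument rather than a deep one, but it is where care is required; once it is in place, all six rows of the table follow by specialization, and the cases $u\cap v=0$ or $u+v=V$ are obtained simply by deleting the vacuous orbits.
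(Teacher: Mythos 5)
Your proposal is correct and follows essentially the same route as the paper: each listed set is visibly $\Stab(u,v)$-invariant (so is a union of orbits), and transitivity on each piece is established by building bases of $V$ adapted to the chain $u\cap v\subseteq u,v\subseteq u+v\subseteq V$ and sending one adapted basis to another by an element of $GL(V)$ fixing $u$ and $v$. For the generic orbit $\Omega(u+v)\setminus\bigl(\Omega(u)\cup\Omega(v)\bigr)$, which you rightly flag as the main obstacle, the paper does exactly what you anticipate but chooses the adapted basis so that a spanning vector of $s$ is a sum $r+t+p$ of one basis vector from each block, which sidesteps the normalization bookkeeping.
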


\begin{proof}
    First assume Case 1. Observe that the five sets displayed in the table are fixed by the elements of $\Stab(u,v)$. Hence each set is a disjoint union of orbits of $\Stab(u,v)$.
    
    We now show that each set forms a single orbit of $\Stab(u,v)$. We start with $\Omega(u\cap v)$.

    Pick distinct $s,s'\in \Omega(u\cap v)$. We show that there exists $\sigma\in \Stab(u,v)$ that sends $s\mapsto s'$.

    Let $\eta$ denote a nonzero vector in $s$. Note that the set $\{\eta\}$ is a basis for $s$; extend this basis to a basis $\{\eta\}\cup \mathcal{R}$ for $u\cap v$. Extend the basis $\{\eta\}\cup \mathcal{R}$ for $u\cap v$ to a basis $\{\eta\}\cup \mathcal{R}\cup \mathcal{T}$ for $u$. Extend the basis $\{\eta\}\cup \mathcal{R}$ for $u\cap v$ to a basis $\{\eta\}\cup \mathcal{R}\cup \mathcal{P}$ for $v$. By Lemma \ref{linin}, $\{\eta\}\cup \mathcal{R}\cup \mathcal{T}\cup \mathcal{P}$ is a basis for $u+v$. Extend the basis $\{\eta\}\cup \mathcal{R}\cup \mathcal{T}\cup \mathcal{P}$ for $u+v$ to a basis $\{\eta\}\cup \mathcal{R}\cup \mathcal{T}\cup \mathcal{P}\cup \mathcal{Q}$ for $V$.
    
    Let $\eta'$ denote a nonzero vector in $s'$. Note that the set $\bigl\{\eta'\bigr\}$ is a basis for $s'$; extend this basis to a basis $\bigl\{\eta'\bigr\}\cup \mathcal{R}'$ for $u\cap v$. Note that $\bigl\{\eta'\bigr\}\cup \mathcal{R}'\cup \mathcal{T}$ is a basis for $u$. Note that $\bigl\{\eta'\bigr\}\cup \mathcal{R}'\cup \mathcal{P}$ is a basis for $v$. By Lemma \ref{linin}, $\bigl\{\eta'\bigr\}\cup \mathcal{R}'\cup \mathcal{T}\cup \mathcal{P}$ is a basis for $u+v$. Note that $\bigl\{\eta'\bigr\}\cup \mathcal{R}'\cup \mathcal{T}\cup \mathcal{P}\cup \mathcal{Q}$ is a basis for $V$.

    By linear algebra, there exists $\sigma\in GL(V)$ that sends $\eta\mapsto \eta'$, $\mathcal{R}\mapsto \mathcal{R}'$ and acts as the identity on each of $\mathcal{T}$, $\mathcal{P}$, $\mathcal{Q}$. By construction, $\sigma$ is contained in $\Stab(u,v)$ and sends $s\mapsto s'$.

    Next we show that the set $\Omega(u)\setminus \Omega(u\cap v)$ is a single orbit of $\Stab(u,v)$.

    Pick distinct $s,s'\in \Omega(u)\setminus \Omega(u\cap v)$. We show that there exists $\sigma\in \Stab(u,v)$ that sends $s\mapsto s'$.
    
    Let $\mathcal{R}\subseteq V$ form a basis for $u\cap v$. Let $\eta$ denote a nonzero vector in $s$. Note that $\{\eta\}$ is a basis for $s$. The subspace $s$ is not contained in $u \cap v$, so $\{\eta\} \cup \mathcal{R}$ is a basis for $s + (u \cap v)$. Extend the basis $\{\eta\} \cup \mathcal{R}$ for $s + (u \cap v)$ to a basis $\{\eta\}\cup \mathcal{R}\cup \mathcal{T}$ for $u$. Extend the basis $\mathcal{R}$ for $u\cap v$ to a basis $\mathcal{R}\cup \mathcal{P}$ for $v$. By Lemma \ref{linin}, $\{\eta\}\cup \mathcal{R}\cup \mathcal{T}\cup \mathcal{P}$ is a basis for $u+v$. Extend the basis $\{\eta\}\cup \mathcal{R}\cup \mathcal{T}\cup \mathcal{P}$ for $u+v$ to a basis $\{\eta\}\cup \mathcal{R}\cup \mathcal{T}\cup \mathcal{P}\cup \mathcal{Q}$ for $V$.

    Let $\eta'$ denote a nonzero vector in $s'$. Note that $\bigl\{\eta'\bigr\}$ is a basis for $s'$. The subspace $s'$ is not contained in $u \cap v$, so $\bigl\{\eta'\bigr\} \cup \mathcal{R}$ is a basis for $s' + (u \cap v)$. Extend the basis $\bigl\{\eta'\bigr\} \cup \mathcal{R}$ for $s' + (u \cap v)$ to a basis $\bigl\{\eta'\bigr\}\cup \mathcal{R}\cup \mathcal{T}'$ for $u$. By Lemma \ref{linin}, $\bigl\{\eta'\bigr\}\cup \mathcal{R}\cup \mathcal{T}'\cup \mathcal{P}$ is a basis for $u+v$. Note that $\bigl\{\eta'\bigr\}\cup \mathcal{R}\cup \mathcal{T}'\cup \mathcal{P}\cup \mathcal{Q}$ is a basis for $V$.

    By linear algebra, there exists $\sigma\in GL(V)$ that sends $\eta\mapsto \eta'$, $\mathcal{T}\mapsto \mathcal{T}'$ and acts as the identity on each of $\mathcal{R}$, $\mathcal{P}$, $\mathcal{Q}$. By construction, $\sigma$ is contained in $\Stab(u,v)$ and sends $s\mapsto s'$.

    By symmetry, the set $\Omega(v)\setminus \Omega(u\cap v)$ is a single orbit of $\Stab(u,v)$.

    Next we show that the set $\Omega(u+v)\setminus \bigl(\Omega(u)\cup \Omega(v)\bigr)$ is a single orbit of $\Stab(u,v)$. 

    Pick distinct $s,s'\in \Omega(u+v)\setminus \bigl(\Omega(u)\cup \Omega(v)\bigr)$. We show that there exists $\sigma\in \Stab(u,v)$ that sends $s\mapsto s'$.

    Let $\eta$ denote a nonzero vector in $s$ and let $\eta'$ denote a nonzero vector in $s'$. By linear algebra, there exist sets $\mathcal{R},\mathcal{T},\mathcal{P}$ such that $\mathcal{R}$ is a basis for $u\cap v$, $\mathcal{R}\cup \mathcal{T}$ is a basis for $u$, $\mathcal{R}\cup \mathcal{P}$ is a basis for $v$, and $\eta=r+t+p$ for $r\in \mathcal{R}$, $t\in \mathcal{T}$, $p\in \mathcal{P}$. 
    
    Similarly, there exist sets $\mathcal{R}',\mathcal{T}',\mathcal{P}'$ such that $\mathcal{R}'$ is a basis for $u\cap v$, $\mathcal{R}'\cup \mathcal{T}'$ is a basis for $u$, $\mathcal{R}'\cup \mathcal{P}'$ is a basis for $v$, and $\eta'=r'+t'+p'$ for $r'\in \mathcal{R}'$, $t'\in \mathcal{T}'$, $p'\in \mathcal{P}'$. 

    By linear algebra, there exists $\sigma\in GL(V)$ that sends $\mathcal{R}\mapsto \mathcal{R}'$, $\mathcal{T}\mapsto \mathcal{T}'$, $\mathcal{P}\mapsto \mathcal{P}'$ such that $r\mapsto r'$, $t\mapsto t'$, $p\mapsto p'$. By construction, $\sigma$ is contained in $\Stab(u,v)$ and sends $s\mapsto s'$.

    Next we show that the set $P_1\setminus \Omega(u+v)$ is a single orbit of $\Stab(u,v)$.

    Pick distinct $s,s'\in P_1\setminus \Omega(u+v)$. We show that there exists $\sigma\in \Stab(u,v)$ that sends $s\mapsto s'$.

    Let $\mathcal{P}\subseteq V$ form a basis for $u+v$. Let $\eta$ denote a nonzero vector in $s$. Note that $\{\eta\}$ is a basis for $s$. The subspace $s$ is not contained in $u+v$, so $\{\eta\}\cup\mathcal{P}$ is a basis for $s+u+v$. Extend the basis $\{\eta\}\cup\mathcal{P}$ for $s+u+v$ to a basis $\{\eta\}\cup\mathcal{P}\cup \mathcal{Q}$ for $V$. 

    Let $\eta'$ denote a nonzero vector in $s'$. Note that $\bigl\{\eta'\bigr\}$ is a basis for $s'$. The subspace $s'$ is not contained in $u+v$, so $\bigl\{\eta'\bigr\}\cup\mathcal{P}$ is a basis for $s'+u+v$. Extend the basis $\bigl\{\eta'\bigr\}\cup\mathcal{P}$ for $s'+u+v$ to a basis $\bigl\{\eta'\bigr\}\cup\mathcal{P}\cup \mathcal{Q}'$ for $V$. 

    By linear algebra, there exists $\sigma\in GL(V)$ that sends $\eta\mapsto \eta'$, $\mathcal{Q}\mapsto \mathcal{Q}'$ and acts as the identity on $\mathcal{P}$. By construction, $\sigma$ is contained in $\Stab(u,v)$ and sends $s\mapsto s'$.

    We have proved the result for Case 1. For the remaining cases the proof is similar, and omitted.
\end{proof}

\section{The stabilizer of two elements in $P$; action on $E$}
\label{actionone}

In this section we pick two distinct elements in $P$ and consider their stabilizer in $GL(V)$. We consider the action of this stabilizer on the Euclidean space $E$.

Pick distinct $u,v\in P$ such that $0\neq u\neq V$ and $0\neq v\neq V$. Let $\Fix(u,v)$ denote the subspace of $E$ consisting of the vectors that are fixed by every element of $\Stab(u,v)$. We give a basis for $\Fix(u,v)$. In what follows, we refer to the cases in Lemma \ref{main1}.

\begin{lemma}
    \label{main2}
    In the table below, we display a basis for $\Fix(u,v)$. 
{\rm{
    \begin{center}
        \begin{tabular}{c|c}
            Case & basis for $\Fix(u,v)$  \\
            \hline
             & \\
            1 & \;$\widehat{u\cap v}, \qquad \qquad \widehat{u},\qquad \qquad  \widehat{v}, \qquad \qquad  \widehat{u+v}$\\
            & \\
            2 & $\widehat{u\cap v}, \qquad \qquad \widehat{u},\qquad \qquad \widehat{v}$\\
            & \\
            3 & $\widehat{u}, \qquad \qquad \widehat{v}, \qquad \qquad \widehat{u+v}$\\
            & \\
            4 & $\widehat{u}, \qquad \qquad \widehat{v}$\\
            & \\
            5 & $\widehat{u}, \qquad \qquad \widehat{v}$\\
            & \\
            6 & $\widehat{u}, \qquad \qquad \widehat{v}$\\
        \end{tabular}
    \end{center}
}}
\end{lemma}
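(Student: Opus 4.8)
The plan is to reduce the statement to the orbit description in Lemma \ref{main1} via Lemma \ref{character}. Recall that Lemma \ref{character} says that $\Fix(u,v)$ is precisely the span of the characteristic vectors of the $\Stab(u,v)$-orbits on $P_1$. So the first step is, in each of the six cases, to write down these characteristic vectors from the orbit list in Lemma \ref{main1}, and then show that the vectors displayed in the table above span the same subspace of $E$ and are linearly independent.

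For the spanning claim, I would work case by case but the mechanism is uniform: each characteristic vector of an orbit is an integer combination of the vectors $\widehat{u\cap v}, \widehat{u}, \widehat{v}, \widehat{u+v}$ together with $\widehat{V}$, and conversely. Concretely, for Case 1 the five orbits have characteristic vectors $\widehat{u\cap v}$; $\widehat{u}-\widehat{u\cap v}$; $\widehat{v}-\widehat{u\cap v}$; $\widehat{u+v}-\widehat{u}-\widehat{v}+\widehat{u\cap v}$ (using the inclusion–exclusion identity $\Omega(u)\cup\Omega(v)$ has characteristic vector $\widehat{u}+\widehat{v}-\widehat{u\cap v}$, valid because $\Omega(u)\cap\Omega(v)=\Omega(u\cap v)$ by (\ref{scap})); and $\widehat{V}-\widehat{u+v}$, which equals $-\widehat{u+v}$ since $\widehat{V}=0$. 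These five clearly span the same space as $\widehat{u\cap v},\widehat{u},\widehat{v},\widehat{u+v}$, and conversely. The remaining cases are analogous: in Cases 2 and 4 one uses $u+v=V$ so $\widehat{u+v}=0$ and the orbit $P_1\setminus(\Omega(u)\cup\Omega(v))$ contributes only $-\widehat{u}-\widehat{v}+\widehat{u\cap v}$ (or $-\widehat{u}-\widehat{v}$ when $u\cap v=0$); in Cases 3 and 4 one uses $u\cap v=0$ so $\widehat{u\cap v}=0$; in Cases 5 and 6 the containment $u\subseteq v$ forces $u\cap v=u$ and $u+v=v$, and the three orbits $\Omega(u),\Omega(v)\setminus\Omega(u),P_1\setminus\Omega(v)$ have characteristic vectors $\widehat{u}$, $\widehat{v}-\widehat{u}$, $-\widehat{v}$, spanning the same space as $\widehat{u},\widehat{v}$.

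For linear independence, I would appeal to Lemma \ref{introlem1}, which gives all the inner products $\langle\widehat{a},\widehat{b}\rangle=[n][h]-[i][j]$ in terms of dimensions. In each case one forms the Gram matrix of the claimed basis and checks it is nonsingular; since $n>2k\geq 6$ and the dimensions involved ($\dim u,\dim v,\dim(u\cap v),\dim(u+v)$) are constrained by the case hypotheses (and by Lemma \ref{modularity}), the relevant determinants are nonzero. Alternatively, and more cleanly, one can note that since a set of characteristic vectors of distinct nonempty orbits is automatically linearly independent whenever the orbits are disjoint and none is all of $P_1$ — this is essentially Lemma \ref{sum0basis} from the Appendix applied after discarding the "complementary" orbit — the number of orbits in Lemma \ref{main1} equals the number of basis vectors claimed here, so once spanning is established, independence (hence the basis property) follows by a dimension count: $\dim\Fix(u,v)$ equals the number of $\Stab(u,v)$-orbits on $P_1$ minus one (the minus one coming from the single linear dependence (C4), i.e. Lemma \ref{sum0converse}).

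The main obstacle is bookkeeping rather than any genuine difficulty: one must be careful in Cases 5 and 6, and in the degenerate sub-situations where $u\cap v=0$ or $u+v=V$, that the "expected" basis vectors do not collapse ($\widehat{u\cap v}=0$ or $\widehat{u+v}=0$), and that the count of nonzero basis vectors still matches the count of orbits minus one after the collapse. Checking that the Gram determinant stays nonzero in these boundary cases — equivalently, that $\widehat{u},\widehat{v}$ (and $\widehat{u\cap v},\widehat{u+v}$ when present) really are independent given the inequalities on $n,k$ — is the one place where the hypothesis $n>2k$ is used, and is where I would spend the most care.
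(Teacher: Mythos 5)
Your proposal is correct and follows essentially the same route as the paper: both invoke Lemma \ref{character} to identify $\Fix(u,v)$ with the span of the orbit characteristic vectors from Lemma \ref{main1}, express those characteristic vectors in terms of $\widehat{u\cap v},\widehat{u},\widehat{v},\widehat{u+v}$ by inclusion--exclusion, and obtain linear independence from the fact that the sum-to-zero relation is the only dependency among the characteristic vectors of a partition of $P_1$ (Lemma \ref{sum0converse}), i.e.\ your ``number of orbits minus one'' dimension count. Your alternative Gram-matrix argument is unnecessary (and left uncomputed), but the cleaner route you prefer is exactly the paper's.
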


\begin{proof}
    We first assume Case 1. By Lemma \ref{character}, the subspace $\Fix(u,v)$ is spanned by the characteristic vectors of the $\Stab(u,v)$-orbits on $P_1$. These orbits are given in Lemma \ref{main1}. Their characteristic vectors are
    \begin{equation}
    \label{orbvec1}
        \widehat{u\cap v}, \qquad \widehat{u}-\widehat{u\cap v}, \qquad \widehat{v}-\widehat{u\cap v}, \qquad \widehat{u+v}-\widehat{u}-\widehat{v}+\widehat{u\cap v}, \qquad -\widehat{u+v}.
    \end{equation}
    The five vectors in (\ref{orbvec1}) sum to $0$. By Lemma \ref{sum0converse}, any four of these vectors are linearly independent, and hence form a basis for $\Fix(u,v)$. In particular, the following vectors form a basis for $\Fix(u,v)$:
    \begin{equation*}
        \widehat{u\cap v}, \qquad \widehat{u}-\widehat{u\cap v}, \qquad \widehat{v}-\widehat{u\cap v}, \qquad -\widehat{u+v}.
    \end{equation*}
    
    Adjusting the basis above, the result follows.

    We have proved the result for Case 1. For the remaining cases the proof is similar, and omitted.
\end{proof}

\begin{definition}
\label{geometricdef}
    Pick distinct $u,v\in P$. By the \emph{geometric basis for $\Fix(u,v)$}, we mean the basis displayed in Lemma \ref{main2}.
\end{definition} 

In the next result, we consider how Lemma \ref{main2} looks for the case in which $u,v \in X$.

\begin{theorem}
    \label{1basis}
    Pick distinct $x,y\in X$. In the table below, we display the geometric basis for $\Fix(x,y)$. 
{\rm{
    \begin{center}
        \begin{tabular}{c|c}
            Case & geometric basis for $\Fix(x,y)$  \\
            \hline
             & \\
            $1\leq \partial(x,y)<k$\; & \;$\widehat{x},\qquad \widehat{y}, \qquad \widehat{x\cap y}, \qquad \widehat{x+y}$\\
            & \\
            $\partial(x,y)=k$\; & \;$\widehat{x},\qquad \widehat{y}, \qquad \widehat{x+y}$\\
        \end{tabular}
    \end{center}
}}
\end{theorem}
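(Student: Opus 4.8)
The plan is to reduce Theorem \ref{1basis} to Lemma \ref{main2} by identifying, for distinct $x,y\in X$, which of the six cases of Section \ref{stab2} applies, and then translating the corresponding line of the table in Lemma \ref{main2} into the language of the graph $\Gamma$. Write $i=\partial(x,y)$. Since $x,y\in X=P_k$ we have $\dim x=\dim y=k$, and since $x,y$ are distinct vertices we have $1\le i\le k$ (the diameter of $\Gamma$ is $k$). First I would record the basic dimension facts: by Lemma \ref{introlem0}, $\dim(x\cap y)=k-i$, and by Lemma \ref{sumdim}, $\dim(x+y)=k+i$. In particular $x\ne 0$, $x\ne V$, $y\ne 0$, $y\ne V$, so $\Stab(x,y)$ and hence $\Fix(x,y)$ and its geometric basis are defined as in Section \ref{actionone}.

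Next I would rule out the cases $5$ and $6$: if $x\subseteq y$ or $y\subseteq x$ then since $\dim x=\dim y=k$ we would get $x=y$, contradicting distinctness. So Cases $5$, $6$ do not occur. Likewise $x\not\subseteq y$ and $y\not\subseteq x$, which eliminates the ``$u\subseteq v$'' and ``$v\subseteq u$'' alternatives from every case. It remains to decide, among Cases $1$--$4$, which one holds, according to whether $x\cap y=0$ and whether $x+y=V$. Since $\dim(x\cap y)=k-i$, we have $x\cap y=0$ if and only if $i=k$; and since $n>2k$, we have $\dim(x+y)=k+i\le 2k<n$, so $x+y\ne V$ always. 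Thus Case $2$ and Case $4$ (where $u+v=V$) never occur. Therefore exactly two possibilities remain: if $1\le i<k$ then $x\cap y\ne 0$ and $x+y\ne V$, so we are in Case $1$; if $i=k$ then $x\cap y=0$ and $x+y\ne V$, so we are in Case $3$.

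Finally I would read off the geometric basis from Lemma \ref{main2}. In Case $1$ (that is, $1\le \partial(x,y)<k$), the geometric basis for $\Fix(x,y)$ is $\widehat{x\cap y},\ \widehat{x},\ \widehat{y},\ \widehat{x+y}$, which up to reordering is exactly the list $\widehat{x},\ \widehat{y},\ \widehat{x\cap y},\ \widehat{x+y}$ in the first row of the table. In Case $3$ (that is, $\partial(x,y)=k$), the geometric basis for $\Fix(x,y)$ is $\widehat{x},\ \widehat{y},\ \widehat{x+y}$, which is the second row of the table; note that here $\widehat{x\cap y}=\widehat{0}=0$, consistent with its absence. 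This proves the theorem.

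I do not anticipate a serious obstacle: the content is entirely a case analysis matching the vertex pair $x,y$ to the six-case classification of Section \ref{stab2}. The only point requiring a little care is the systematic elimination of four of the six cases using $\dim x=\dim y=k$ together with the standing hypothesis $n>2k$ (which forces $x+y\ne V$); this is the step I would state most explicitly, since it is where the hypotheses of the paper are actually used.
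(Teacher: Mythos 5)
Your proposal is correct and follows essentially the same route as the paper: rule out containment using $\dim x=\dim y=k$, note $x+y\neq V$ since $n>2k$, and then invoke Case 1 or Case 3 of Lemma \ref{main2} according to whether $x\cap y$ is zero. The extra detail you supply (explicitly eliminating Cases 2, 4, 5, 6) is a harmless elaboration of what the paper leaves implicit.
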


\begin{proof}
    Since $x$ and $y$ are distinct, $x\not\subseteq y$ and $y\not\subseteq x$. Since $n>2k$, we have $x+y\neq V$.
    
    We first assume that $1\leq \partial(x,y)<k$. Since $\partial(x,y)<k$, we have $x\cap y\neq 0$. The result follows from Case 1 of Lemma \ref{main2}.

    Next we assume that $\partial(x,y)=k$. By this assumption, $x\cap y=0$. The result follows from Case 3 of Lemma \ref{main2}.
\end{proof}

Pick distinct $x,y\in X$. Our next general goal is to use the graph $\Gamma$ to find another basis for $\Fix(x,y)$, called the combinatorial basis. We will focus on the case $1<\partial(x,y)<k$. The cases $\partial(x,y)=1$ and $\partial(x,y)=k$ are more involved and require different methods; these cases will be handled in a future paper.

\section{The subspace $\Fix(x,y)$ and its combinatorial basis}
Pick $x,y\in X$ such that $1<\partial(x,y)<k$. In this section we use the graph $\Gamma$ to construct two vectors $B_{xy}$ and $C_{xy}$. In Section \ref{more}, we will show that the following vectors form a basis for $\Fix(x,y)$:
\begin{equation}
\label{basis2}
    \widehat{x},\qquad \qquad  \widehat{y}, \qquad \qquad B_{xy},\qquad \qquad  C_{xy}.
\end{equation}
The basis (\ref{basis2}) will be called the combinatorial basis. We will formally define this basis in Section \ref{more}.

To define the vectors $B_{xy}$ and $C_{xy}$, we first consider two orbits of the $\Stab(x,y)$-action on $P$.

\begin{definition}
\label{calbcdef}
For $x,y\in X$ such that $1<\partial(x,y)<k$, define 
\begin{align*}
    \mathcal{B}_{xy}&=\{z\in \Gamma(x)\mid \partial(y,z)=\partial(x,y)+1\},\\
    \mathcal{C}_{xy}&=\{z\in \Gamma(x)\mid \partial(y,z)=\partial(x,y)-1\}.
\end{align*}
\end{definition}

Observe that $|\mathcal{B}_{xy}|=b_i$ and $|\mathcal{C}_{xy}|=c_i$, where $i=\partial(x,y)$.

Our next goal is to show that the sets $\mathcal{B}_{xy}$ and $\mathcal{C}_{xy}$ are orbits of $\Stab(x,y)$.

\begin{lemma}
\label{orbitcxy}
    For $x,y\in X$ such that $1<\partial(x,y)<k$, the set $\mathcal{C}_{xy}$ is an orbit of the $\Stab(x,y)$-action on $P$.
\end{lemma}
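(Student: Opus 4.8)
The plan is to show that $\mathcal{C}_{xy}$ is a single orbit of $\Stab(x,y)$ acting on $P$, by the same style of argument used in Lemma \ref{main1}: first observe that $\mathcal{C}_{xy}$ is $\Stab(x,y)$-invariant (so it is a union of orbits), and then show that any two of its elements are related by some $\sigma \in \Stab(x,y)$. Invariance is immediate from Lemma \ref{presdist}: if $\sigma \in \Stab(x,y)$ and $z \in \mathcal{C}_{xy}$, then $\partial(\sigma(x),\sigma(z)) = \partial(x,z) = 1$ and $\partial(\sigma(y),\sigma(z)) = \partial(y,z) = \partial(x,y)-1$, and since $\sigma$ fixes $x$ and $y$ this says $\sigma(z) \in \mathcal{C}_{xy}$.

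For the transitivity part, set $i = \partial(x,y)$ and pick $z, z' \in \mathcal{C}_{xy}$. I would first translate the combinatorial conditions into linear-algebra conditions on the subspaces. Since $\partial(x,z) = 1$, we have $\dim(x \cap z) = k-1$ by Lemma \ref{introlem0}; since $\partial(y,z) = i-1$, we have $\dim(y \cap z) = k-i+1$. Moreover $\partial(x,y) = i = 1 + (i-1) = \partial(x,z) + \partial(z,y)$, so by Lemma \ref{contain} we get $x \cap y \subseteq z \subseteq x+y$. Thus $z$ contains $x \cap y$ (which has dimension $k-i$), and $z \cap x$ has dimension $k-1$, which forces $z \cap x$ to be a hyperplane of $x$ containing $x \cap y$; similarly $z \cap y$ has dimension $k-i+1$ and contains $x \cap y$ with $\dim(x\cap y) = k-i$, so $z\cap y$ is a line's worth bigger than $x\cap y$ inside $y$. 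The key structural fact to extract is that $z$ is obtained from $x \cap y$ by adjoining one vector from $y$ (spanning $z \cap y$ over $x\cap y$) and then adjoining $i-1$ more vectors to reach dimension $k$, all inside $x+y$; equivalently, $z = (x\cap y) + (\text{a line in } y \text{ outside } x\cap y) + (\text{an } (i-1)\text{-dimensional complement inside } x+y)$. The same description holds for $z'$ with primed data.

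Now I would build $\sigma \in \Stab(x,y)$ sending $z \mapsto z'$ by choosing compatible bases, exactly as in Lemma \ref{main1}. Fix a basis $\mathcal{R}$ of $x \cap y$; extend to a basis $\mathcal{R} \cup \mathcal{S}$ of $x$ and to a basis $\mathcal{R} \cup \mathcal{T}$ of $y$, so by Lemma \ref{linin} $\mathcal{R} \cup \mathcal{S} \cup \mathcal{T}$ is a basis of $x+y$, and extend to a basis of $V$. I then want to choose these bases so that $z$ is the span of $\mathcal{R}$ together with one distinguished vector $\tau \in \mathcal{T}$ and one distinguished $(i-1)$-subset of $\mathcal{S} \cup \mathcal{T}$; concretely, pick $\tau \in z \cap y$ spanning it over $x\cap y$ and extend $\mathcal{R} \cup \{\tau\}$ to a basis of $y$, and pick an $(i-1)$-dimensional complement of $z \cap y$ inside $z$ (this sits inside $x+y$) and arrange it among the basis vectors of $x+y$ not yet used. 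After doing the analogous thing for $z'$ with data $\mathcal{R}', \mathcal{S}', \mathcal{T}', \tau'$ and the primed complement, linear algebra produces $\sigma \in GL(V)$ sending $\mathcal{R} \mapsto \mathcal{R}'$, $\mathcal{S} \mapsto \mathcal{S}'$, $\mathcal{T} \mapsto \mathcal{T}'$ (matching $\tau \mapsto \tau'$ and the chosen complements), and the leftover basis vectors of $V$ to their primed counterparts; by construction $\sigma$ fixes $x$ and $y$ and sends $z \mapsto z'$.

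The main obstacle I anticipate is bookkeeping: making sure the chosen complement of $z \cap y$ inside $z$, which lives in $x+y$ but need not lie in $x$ or in $y$, can be fit into the basis $\mathcal{R}\cup\mathcal{S}\cup\mathcal{T}$ of $x+y$ simultaneously with the requirement that $\mathcal{R}\cup\mathcal{S}$ span $x$ and $\mathcal{R}\cup\mathcal{T}$ span $y$. This is the same subtlety handled in the Case 1 argument for the orbit $\Omega(u+v)\setminus(\Omega(u)\cup\Omega(v))$, where basis vectors of the form $r+s+t$ appear; here I expect to need to describe the complement's basis vectors in "mixed" form relative to $\mathcal{S}$ and $\mathcal{T}$ and then quote the existence of a $GL(V)$ element respecting all these incidences. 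Once that is set up, the verification that $\sigma \in \Stab(x,y)$ and $\sigma(z) = z'$ is routine, and combined with invariance this shows $\mathcal{C}_{xy}$ is a single $\Stab(x,y)$-orbit.
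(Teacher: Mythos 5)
Your overall strategy (invariance via Lemma \ref{presdist}, then transitivity via matched bases and an element of $GL(V)$) is the same as the paper's, and the invariance half is fine. But the transitivity half has a genuine gap, and it sits exactly where you flag your ``main obstacle.'' You describe $z$ as $x\cap y$ plus one vector from $y$ plus an $(i-1)$-dimensional complement that you only locate inside $x+y$, and you then concede that fitting this complement into a basis of $x+y$ compatible with both $x$ and $y$ requires ``mixed'' vectors whose matching you do not carry out. As written, the element $\sigma$ is never actually constructed: you would have to verify that a bijection between the mixed data for $z$ and for $z'$ can be chosen so that the resulting map fixes $x$ and $y$, and nothing in your argument guarantees this. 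Handling an $(i-1)$-dimensional space of mixed vectors is substantially more delicate than the single mixed vector $r+t+p$ in the Case 1 argument of Lemma \ref{main1} that you cite as a model.

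The missing observation that dissolves the obstacle is that the complement can be chosen inside $x$. Since $x\cap y\subseteq z$, we have $x\cap y\cap z=x\cap y$, so
\begin{equation*}
\dim\bigl((x\cap z)+(y\cap z)\bigr)=(k-1)+(k-i+1)-(k-i)=k,
\end{equation*}
whence $z=(x\cap z)+(y\cap z)$. Thus one takes a basis $\mathcal{R}$ of $x\cap y$, extends it to a basis $\mathcal{R}\cup\mathcal{S}$ of $x\cap z$ (so $\mathcal{S}\subseteq x$ with $|\mathcal{S}|=i-1$) and to a basis $\mathcal{R}\cup\mathcal{T}$ of $z\cap y$ (so $\mathcal{T}\subseteq y$ with $|\mathcal{T}|=1$); by Lemma \ref{linin}, $\mathcal{R}\cup\mathcal{S}\cup\mathcal{T}$ is a basis of $z$. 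Extending $\mathcal{R}\cup\mathcal{S}$ inside $x$ and $\mathcal{R}\cup\mathcal{T}$ inside $y$, then up to $x+y$ and to $V$, and doing the same for $z'$ with the same $\mathcal{R}$, one gets a $\sigma\in GL(V)$ that is the identity on $\mathcal{R}$ and on the complement of $x+y$, permutes the pieces within $x$ and within $y$ separately, fixes $x$ and $y$, and sends $z\mapsto z'$ --- no mixed vectors needed. This is the route the paper takes; your proof needs this decomposition of $z$ (or a completed version of your mixed-vector matching) to be complete.
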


\begin{proof}
    By Lemma \ref{presdist}, the set $\mathcal{C}_{xy}$ is a disjoint union of orbits of $\Stab(x,y)$. We now show that this union consists of a single orbit. Let $z,z'\in \mathcal{C}_{xy}$. We display an element $\sigma\in \Stab(x,y)$ that sends $z\mapsto z'$.

    By Lemma \ref{contain}, 
    \begin{equation*}
        x\cap y\subseteq z\subseteq x+y, \qquad \qquad x\cap y\subseteq z'\subseteq x+y.
    \end{equation*}
    
    Let the set $\mathcal{R}\subseteq V$ form a basis for $x\cap y$. Extend the basis $\mathcal{R}$ for $x\cap y$ to a basis $\mathcal{R}\cup \mathcal{S}$ for $x\cap z$.  Extend the basis $\mathcal{R}$ for $x\cap y$ to a basis $\mathcal{R}\cup \mathcal{T}$ for $z\cap y$. By Lemma \ref{linin}, $\mathcal{R}\cup \mathcal{S}\cup \mathcal{T}$ is a basis for $z$. Extend the basis $\mathcal{R}\cup \mathcal{S}$ for $x\cap z$ to a basis $\mathcal{R}\cup \mathcal{S}\cup \mathcal{P}$ for $x$. Extend the basis $\mathcal{R}\cup \mathcal{T}$ for $z\cap y$ to a basis $\mathcal{R}\cup \mathcal{T}\cup \mathcal{Q}$ for $y$. By Lemma \ref{linin}, $\mathcal{R}\cup \mathcal{S}\cup \mathcal{T}\cup \mathcal{P}\cup \mathcal{Q}$ is a basis for $x+y$. Extend the basis $\mathcal{R}\cup \mathcal{S}\cup \mathcal{T}\cup \mathcal{P}\cup \mathcal{Q}$ for $x+y$ to a basis $\mathcal{R}\cup \mathcal{S}\cup \mathcal{T}\cup \mathcal{P}\cup \mathcal{Q}\cup \mathcal{W}$ for $V$.
    
    Extend the basis $\mathcal{R}$ for $x\cap y$ to a basis $\mathcal{R}\cup \mathcal{S}'$ for $x\cap z'$.
    Extend the basis $\mathcal{R}$ for $x\cap y$ to a basis $\mathcal{R}\cup \mathcal{T}'$ for $z'\cap y$. 
    By Lemma \ref{linin}, $\mathcal{R}\cup \mathcal{S}'\cup \mathcal{T}'$ is a basis for $z'$. Extend the basis $\mathcal{R}\cup \mathcal{S}'$ for $x\cap z'$ to a basis $\mathcal{R}\cup \mathcal{S}'\cup \mathcal{P}'$ for $x$. Extend the basis $R\cup \mathcal{T}'$ for $z'\cap y$ to a basis $\mathcal{R}\cup \mathcal{T}'\cup \mathcal{Q}'$ for $y$. By Lemma \ref{linin}, $\mathcal{R}\cup \mathcal{S}'\cup \mathcal{T}'\cup \mathcal{P}'\cup \mathcal{Q}'$ is a basis for $x+y$.  Note that $\mathcal{R}\cup \mathcal{S}'\cup \mathcal{T}'\cup \mathcal{P}'\cup \mathcal{Q}'\cup \mathcal{W}$ is a basis for $V$.
    
    By linear algebra, there exists $\sigma\in GL(V)$ that sends $\mathcal{S}\mapsto \mathcal{S}'$, $\mathcal{T}\mapsto \mathcal{T}'$, $\mathcal{P}\mapsto \mathcal{P}'$, $\mathcal{Q}\mapsto \mathcal{Q}'$ and acts as the identity on each of $\mathcal{R}$, $\mathcal{W}$. By construction, $\sigma$ is contained in $\Stab(x,y)$ and sends $z\mapsto z'$. The result follows.
    \end{proof}
\begin{lemma}
\label{geodesic}
    Let $x,y,x',y'\in X$ satisfy $1< \partial(x,y)=\partial\left(x', y'\right)< k$. Let $z\in \mathcal{C}_{xy}$ and $z'\in \mathcal{C}_{x'y'}$. Then there exists an element in $GL(V)$ that sends $x\mapsto x'$, $y\mapsto y'$, $z\mapsto z'$.
\end{lemma}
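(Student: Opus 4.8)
The plan is to chain together two applications of the orbit-transitivity machinery already developed, using distance-transitivity of $GL(V)$ on $\Gamma$ as the first step and Lemma~\ref{orbitcxy} as the second. First I would use the fact (recorded at the end of Section~\ref{euclideanrepresentation}, via \cite[Theorem~9.3.3]{BCN}) that $GL(V)$ acts distance-transitively on $\Gamma$. Since $\partial(x,y)=\partial(x',y')$, there exists $\tau\in GL(V)$ with $\tau(x)=x'$ and $\tau(y)=y'$. Applying $\tau$ to the triple $(x,y,z)$ produces $(x',y',\tau(z))$, and since $\tau$ preserves distances (Lemma~\ref{presdist}) and $z\in\mathcal{C}_{xy}$, we get $\tau(z)\in\mathcal{C}_{x'y'}$.

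Now both $\tau(z)$ and $z'$ lie in $\mathcal{C}_{x'y'}$. By Lemma~\ref{orbitcxy} (applicable because $1<\partial(x',y')<k$), the set $\mathcal{C}_{x'y'}$ is a single orbit of $\Stab(x',y')$, so there exists $\rho\in\Stab(x',y')$ with $\rho(\tau(z))=z'$. The composite $\sigma=\rho\tau$ then satisfies $\sigma(x)=\rho(x')=x'$, $\sigma(y)=\rho(y')=y'$, and $\sigma(z)=\rho(\tau(z))=z'$, as required.

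There is essentially no hard part here: the statement is a routine "transitivity is closed under composition" argument, and every ingredient has been proved earlier. The only mild subtlety is making sure the hypotheses of Lemma~\ref{orbitcxy} are met for the primed pair — namely that $1<\partial(x',y')<k$ — but this is immediate from the hypothesis $1<\partial(x,y)=\partial(x',y')<k$. One could also phrase the whole thing symmetrically (moving $z$ and $z'$ both into a standard position), but the two-step composition above is the cleanest route.
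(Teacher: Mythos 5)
Your proposal is correct and follows essentially the same route as the paper: the paper also invokes distance-transitivity to reduce to the case $x=x'$, $y=y'$ and then applies Lemma \ref{orbitcxy}. Your version merely spells out the composition $\sigma=\rho\tau$ that the paper leaves implicit in its ``without loss of generality'' phrasing.
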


\begin{proof}
Since $\Gamma$ is distance-transitive, we may assume without loss that $x=x'$ and $y=y'$. The result follows from Lemma \ref{orbitcxy}.
\end{proof}

\begin{lemma}
\label{orbitbxy}
    For $x,y\in X$ such that $1< \partial(x,y)<k$, the set $\mathcal{B}_{xy}$ is an orbit of the $\Stab(x,y)$-action on $P$.  
\end{lemma}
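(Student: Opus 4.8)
The plan is to mirror the proof of Lemma~\ref{orbitcxy}. First, since $\mathcal{B}_{xy}$ is defined purely by the distance conditions $\partial(x,z)=1$ and $\partial(y,z)=\partial(x,y)+1$, Lemma~\ref{presdist} shows that $\mathcal{B}_{xy}$ is invariant under $\Stab(x,y)$, hence a disjoint union of $\Stab(x,y)$-orbits. It remains to show that any two $z,z'\in\mathcal{B}_{xy}$ lie in a common orbit, i.e.\ to exhibit $\sigma\in\Stab(x,y)$ with $\sigma(z)=z'$.

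Set $i=\partial(x,y)$. I would first record the relevant configuration of subspaces. By Lemma~\ref{introlem0}, $\dim(x\cap z)=k-1$ and $\dim(y\cap z)=k-i-1$, and likewise for $z'$. Since $\partial(y,z)=\partial(y,x)+\partial(x,z)$, applying Lemma~\ref{contain} to the triple $y,z,x$ gives $y\cap z\subseteq x\subseteq y+z$; in particular $y\cap z\subseteq x\cap y$, so $y\cap z$ is a hyperplane of the $(k-i)$-dimensional space $x\cap y$ and $x\cap y\cap z=y\cap z$. A dimension count (using Lemma~\ref{modularity}) then gives $(x\cap y)+(x\cap z)=x$, with intersection $y\cap z$, and $\dim(y+z)=k+i+1$. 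The same statements hold with $z'$ in place of $z$. Note that here $x\subseteq y+z$ plays the role that $z\subseteq x+y$ played in Lemma~\ref{orbitcxy}.

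Next I would build two adapted bases of $V$. On the ``$z$-side'': pick a basis $\mathcal{R}$ of $y\cap z$; extend it by one vector $\eta$ to a basis of $x\cap y$; extend $\mathcal{R}$ to a basis $\mathcal{R}\cup\mathcal{S}$ of $x\cap z$, so that by Lemma~\ref{linin} the set $\mathcal{R}\cup\{\eta\}\cup\mathcal{S}$ is a basis of $x$; pick $\zeta\in z\setminus(x\cap z)$ so that $\mathcal{R}\cup\mathcal{S}\cup\{\zeta\}$ is a basis of $z$; extend $\mathcal{R}\cup\{\eta\}$ to a basis $\mathcal{R}\cup\{\eta\}\cup\mathcal{T}$ of $y$, so that by Lemma~\ref{linin} the set $\mathcal{R}\cup\{\eta\}\cup\mathcal{T}\cup\mathcal{S}\cup\{\zeta\}$ is a basis of $y+z$; finally extend by $\mathcal{W}$ to a basis of $V$. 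Build a ``$z'$-side'' basis $\mathcal{R}'\cup\{\eta'\}\cup\mathcal{T}'\cup\mathcal{S}'\cup\{\zeta'\}\cup\mathcal{W}'$ in exactly the same way, with each primed set having the same cardinality as its unprimed counterpart. By linear algebra there is $\sigma\in GL(V)$ sending $\mathcal{R}\mapsto\mathcal{R}'$, $\eta\mapsto\eta'$, $\mathcal{T}\mapsto\mathcal{T}'$, $\mathcal{S}\mapsto\mathcal{S}'$, $\zeta\mapsto\zeta'$, $\mathcal{W}\mapsto\mathcal{W}'$. Then $\sigma$ carries the $z$-side basis $\mathcal{R}\cup\{\eta\}\cup\mathcal{S}$ of $x$ to the $z'$-side basis of $x$, so $\sigma(x)=x$; it carries $\mathcal{R}\cup\{\eta\}\cup\mathcal{T}$ to the $z'$-side basis of $y$, so $\sigma(y)=y$; and it carries $\mathcal{R}\cup\mathcal{S}\cup\{\zeta\}$ to $\mathcal{R}'\cup\mathcal{S}'\cup\{\zeta'\}$, so $\sigma(z)=z'$. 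Hence $\sigma\in\Stab(x,y)$ and $\sigma(z)=z'$, as desired.

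I expect the main obstacle to be precisely the feature distinguishing this from Lemma~\ref{orbitcxy}: there $x\cap y\subseteq z$, so $x\cap y$ could be fixed pointwise by $\sigma$, whereas here $y\cap z$ is a proper, $z$-dependent hyperplane of $x\cap y$. One must therefore split the basis of $x\cap y$ as $\mathcal{R}\cup\{\eta\}$ with $\mathcal{R}$ spanning the moving hyperplane $y\cap z$, and must \emph{not} reuse $\mathcal{R}$ (or $\mathcal{W}$) on the $z'$-side, since $\sigma$ fixes $x$ and $y$ only setwise and must send the hyperplane $y\cap z$ to $y\cap z'$. Getting this bookkeeping right, together with carefully checking all the dimension counts and Lemma~\ref{linin} applications that certify the listed sets are indeed bases, is where the care lies; once that is in place, the production of $\sigma$ is routine. (As a shortcut in the range $i<k-1$, one could instead observe that $x\in\mathcal{C}_{zy}$ and apply Lemma~\ref{geodesic} with $(z,y,x)$ and $(z',y,x)$; but the direct construction above has the advantage of also covering $i=k-1$.)
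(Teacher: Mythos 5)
Your proposal is correct, but it takes a different route from the paper. The paper's proof of Lemma \ref{orbitbxy} is exactly the ``shortcut'' you relegate to your final parenthetical: it observes that $x\in\mathcal{C}_{zy}$ and $x\in\mathcal{C}_{z'y}$ and then invokes Lemma \ref{geodesic} applied to the triples $(z,y,x)$ and $(z',y,x)$. Your direct construction is therefore a from-scratch alternative, and your bookkeeping checks out: the key facts $y\cap z\subseteq x\subseteq y+z$ (from Lemma \ref{contain}), $x\cap y\cap z=y\cap z$ of dimension $k-i-1$, and $(x\cap y)+(x\cap z)=x$ are all right, the cardinalities $|\mathcal{R}|=k-i-1$, $|\mathcal{S}|=|\mathcal{T}|=i$ match on the primed and unprimed sides, and each application of Lemma \ref{linin} is legitimate. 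What your approach buys is precisely the point you flag: Lemma \ref{geodesic} is stated only for $1<\partial<k$, and in the reduction one needs it at distance $\partial(z,y)=i+1$, which equals $k$ when $i=k-1$ (indeed $\mathcal{C}_{zy}$ is then not even covered by Definition \ref{calbcdef}). The paper's proof glosses over this edge case; it is repairable, since the basis construction in Lemma \ref{orbitcxy} goes through verbatim at distance $k$ with $\mathcal{R}=\emptyset$, but as written the reduction is outside the stated hypotheses. Your uniform construction avoids the issue entirely, at the cost of being considerably longer than the paper's two-line argument.
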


\begin{proof}
     By Lemma \ref{presdist}, the set $\mathcal{B}_{xy}$ is a disjoint union of orbits of $\Stab(x,y)$. We now show that this union consists of a single orbit. Let $z,z'\in \mathcal{B}_{xy}$. By construction, $x\in \mathcal{C}_{zy}$ and $x\in \mathcal{C}_{z'y}$. By Lemma \ref{geodesic}, there exists an element in $\Stab(x,y)$ that sends $z\mapsto z'$. The result follows.
\end{proof}

\begin{definition}
\label{bcdef}
For $x,y\in X$ such that $1<\partial(x,y)<k$, define the vectors
\begin{equation}
\nonumber
    B_{xy}=\sum_{z\in \mathcal{B}_{xy}}\widehat{z},\qquad \qquad C_{xy}=\sum_{z\in \mathcal{C}_{xy}}\widehat{z}.
\end{equation}
\end{definition}

\begin{lemma}
    \label{fixcontain2}
    For $x,y\in X$ such that $1<\partial(x,y)<k$, the subspace $\Fix(x,y)$ contains $B_{xy}$ and $C_{xy}$.
\end{lemma}

\begin{proof}
        Pick $\sigma\in \Stab(x,y)$. The map $\sigma$ fixes the sets $\mathcal{B}_{xy}$ and $\mathcal{C}_{xy}$. The result follows. 
\end{proof}

\section{The inner products involving the geometric basis and the combinatorial basis}
\label{innerproducts}
Pick $x,y\in X$ such that $1<\partial(x,y)<k$. In this section we compute the inner products between:
\begin{enumerate}[label=(\roman*)]
    \item any two vectors in the geometric basis for $\Fix(x,y)$;

    \item any vector in the geometric basis for $\Fix(x,y)$ and any vector in the set (\ref{basis2});

    \item any two vectors in the set (\ref{basis2}).
\end{enumerate}

We start with case (i).

\begin{lemma}
\label{5inner1}
For $x,y\in X$ such that $1<\partial(x,y)<k$, we have 
\begin{equation*}
    \begin{aligned}
    \Bigl<\widehat{x},\widehat{x\cap y}\Bigr>&=q^k[k-i][n-k],\\
    \Bigl<\widehat{y},\widehat{x\cap y}\Bigr>&=q^k[k-i][n-k],
\end{aligned}
\end{equation*}
where $i=\partial(x,y)$.
\end{lemma}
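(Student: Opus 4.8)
The plan is to invoke Lemma \ref{introlem1} directly, since $\widehat{x}$, $\widehat{y}$, and $\widehat{x\cap y}$ are all of the form $\widehat{u}$ for elements $u\in P$. Write $i=\partial(x,y)$. By Lemma \ref{introlem0}, the subspace $x\cap y$ has dimension $k-i$, so $x\cap y\in P_{k-i}$. To compute $\bigl<\widehat{x},\widehat{x\cap y}\bigr>$ using Lemma \ref{introlem1}, I need the three dimensions: $\dim x=k$, $\dim(x\cap y)=k-i$, and $\dim\bigl(x\cap(x\cap y)\bigr)$. Since $x\cap y\subseteq x$, we have $x\cap(x\cap y)=x\cap y$, which has dimension $k-i$. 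Hence Lemma \ref{introlem1} gives $\bigl<\widehat{x},\widehat{x\cap y}\bigr>=[n][k-i]-[k][k-i]$.

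The remaining step is the algebraic simplification $[n][k-i]-[k][k-i]=q^k[k-i][n-k]$. Factoring out $[k-i]$, it suffices to check that $[n]-[k]=q^k[n-k]$. Using the definition $[m]=(q^m-1)/(q-1)$, we have $[n]-[k]=(q^n-q^k)/(q-1)=q^k(q^{n-k}-1)/(q-1)=q^k[n-k]$, as desired. So $\bigl<\widehat{x},\widehat{x\cap y}\bigr>=q^k[k-i][n-k]$. The computation for $\bigl<\widehat{y},\widehat{x\cap y}\bigr>$ is identical, with the roles of $x$ and $y$ interchanged: $\dim y=k$, $\dim(x\cap y)=k-i$, and $y\cap(x\cap y)=x\cap y$ has dimension $k-i$, so again the inner product equals $[n][k-i]-[k][k-i]=q^k[k-i][n-k]$.

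There is no real obstacle here; the statement is an immediate corollary of Lemma \ref{introlem1} together with the elementary identity $[n]-[k]=q^k[n-k]$. The only thing to be careful about is confirming that $k-i$ is a legitimate (nonnegative) dimension, which follows from the hypothesis $1<i<k$, and that the intersection of a subspace with one of its own subspaces is that subspace. I would present this as a two-line proof citing Lemmas \ref{introlem0} and \ref{introlem1}, recording the identity $[n]-[k]=q^k[n-k]$ inline.
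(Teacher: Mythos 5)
Your proof is correct and follows exactly the route the paper intends, which simply says ``Routine using Lemmas \ref{introlem0}, \ref{introlem1} and linear algebra''; you have filled in the details (the observation $x\cap(x\cap y)=x\cap y$ and the identity $[n]-[k]=q^k[n-k]$) accurately.
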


\begin{proof}
Routine using Lemmas \ref{introlem0}, \ref{introlem1} and linear algebra.
\end{proof}

\begin{lemma}
\label{5inner2}
For $x,y\in X$ such that $1<\partial(x,y)<k$, we have
\begin{equation*}
    \begin{aligned}
        \Bigl<\widehat{x},\widehat{x+y}\Bigr>&=q^{k+i}[k][n-k-i], \label{3.1.3}\\
        \Bigl<\widehat{y},\widehat{x+y}\Bigr>&=q^{k+i}[k][n-k-i],
    \end{aligned}
\end{equation*}
where $i=\partial(x,y)$.
\end{lemma}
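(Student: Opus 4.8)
The plan is to compute $\bigl<\widehat{x},\widehat{x+y}\bigr>$ directly from Lemma \ref{introlem1}, and then obtain $\bigl<\widehat{y},\widehat{x+y}\bigr>$ by symmetry (or by an identical computation). Recall that Lemma \ref{introlem1} gives $\bigl<\widehat{u},\widehat{v}\bigr>=[n][h]-[i'][j']$ where $i'=\dim u$, $j'=\dim v$, $h=\dim(u\cap v)$. So the entire task reduces to identifying the three relevant dimensions when $u=x$ and $v=x+y$.

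First I would record that $\dim x=k$, and that by Lemma \ref{sumdim} we have $\dim(x+y)=k+i$ where $i=\partial(x,y)$. Next I would compute $\dim\bigl(x\cap(x+y)\bigr)$: since $x\subseteq x+y$, we have $x\cap(x+y)=x$, so this dimension is simply $k$. Plugging into Lemma \ref{introlem1}:
\begin{equation*}
    \bigl<\widehat{x},\widehat{x+y}\bigr>=[n][k]-[k][k+i]=[k]\bigl([n]-[k+i]\bigr).
\end{equation*}
Then I would simplify $[n]-[k+i]$ using the identity $[a]-[b]=q^b[a-b]$ (valid for $a\geq b$, since $[a]-[b]=\frac{q^a-1}{q-1}-\frac{q^b-1}{q-1}=\frac{q^a-q^b}{q-1}=q^b\frac{q^{a-b}-1}{q-1}=q^b[a-b]$), which gives $[n]-[k+i]=q^{k+i}[n-k-i]$. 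Hence $\bigl<\widehat{x},\widehat{x+y}\bigr>=q^{k+i}[k][n-k-i]$, as claimed. The computation for $\bigl<\widehat{y},\widehat{x+y}\bigr>$ is the same with $y$ in place of $x$: $\dim y=k$, $y\subseteq x+y$ so $\dim\bigl(y\cap(x+y)\bigr)=k$, and $\dim(x+y)=k+i$ again, yielding the identical value $q^{k+i}[k][n-k-i]$.

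There is essentially no obstacle here; the lemma is a direct corollary of Lemma \ref{introlem1} together with Lemma \ref{sumdim}. The only point requiring any care is the arithmetic simplification $[n]-[k+i]=q^{k+i}[n-k-i]$ via the $q$-integer identity above, and confirming that $x$ and $y$ are each contained in $x+y$ so that the intersection dimension collapses to $k$. Both are routine, which is presumably why the paper's own proof will simply say "routine using Lemmas \ref{introlem0}, \ref{introlem1} and linear algebra," paralleling the proof of Lemma \ref{5inner1}.
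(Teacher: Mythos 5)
Your proposal is correct and matches the paper's approach exactly: the paper's proof is simply "Routine using Lemmas \ref{sumdim}, \ref{introlem1} and linear algebra," and your computation (using $\dim(x+y)=k+i$ from Lemma \ref{sumdim}, the containment $x\subseteq x+y$, and the $q$-integer identity $[n]-[k+i]=q^{k+i}[n-k-i]$) is precisely the routine work being referenced. No issues.
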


\begin{proof}
    Routine using Lemmas \ref{sumdim}, \ref{introlem1} and linear algebra.
\end{proof}

\begin{lemma}
\label{5inner3}
For $x,y\in X$ such that $1<\partial(x,y)<k$, we have
\begin{equation}
    \begin{aligned}
        \nonumber
        \Bigl\Vert\widehat{x\cap y}\Bigr\Vert^2&=q^{k-i}[k-i][n-k+i],\\
        \Bigl<\widehat{x\cap y},\widehat{x+y}\Bigr>&=q^{k+i}[k-i][n-k-i],\\
        \Bigl\Vert\widehat{x+y}\Bigr\Vert^2&=q^{k+i}[k+i][n-k-i],
    \end{aligned}
\end{equation}
where $i=\partial(x,y)$.
\end{lemma}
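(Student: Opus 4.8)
The plan is to compute each of the three quantities by using Lemma \ref{introlem1}, which reduces every inner product $\langle \widehat{u},\widehat{v}\rangle$ to $[n][h]-[i][j]$ once we know $\dim u$, $\dim v$, and $\dim(u\cap v)$. Throughout, write $i=\partial(x,y)$, so by Lemma \ref{introlem0} we have $\dim(x\cap y)=k-i$, and by Lemma \ref{sumdim} we have $\dim(x+y)=k+i$.

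First I would handle $\bigl\Vert\widehat{x\cap y}\bigr\Vert^2$. Apply Corollary \ref{introcor} with the subspace $x\cap y\in P_{k-i}$: this immediately gives $\bigl\Vert\widehat{x\cap y}\bigr\Vert^2=q^{k-i}[k-i][n-k+i]$, since $n-(k-i)=n-k+i$. Next, for $\bigl\Vert\widehat{x+y}\bigr\Vert^2$, apply Corollary \ref{introcor} with $x+y\in P_{k+i}$: this gives $q^{k+i}[k+i][n-k-i]$. Both of these are one-line applications of an already-established corollary.

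The only genuine computation is $\bigl<\widehat{x\cap y},\widehat{x+y}\bigr>$. Here I would use Lemma \ref{introlem1} with $u=x\cap y$ (dimension $k-i$) and $v=x+y$ (dimension $k+i$). The key observation is that $x\cap y\subseteq x\subseteq x+y$, so $(x\cap y)\cap(x+y)=x\cap y$, which has dimension $k-i$. Plugging into Lemma \ref{introlem1} yields
\begin{equation*}
    \bigl<\widehat{x\cap y},\widehat{x+y}\bigr>=[n][k-i]-[k-i][k+i]=[k-i]\bigl([n]-[k+i]\bigr).
\end{equation*}
Then I would simplify $[n]-[k+i]$ using the identity $[a]-[b]=q^{b}[a-b]$ for integers $a\ge b$ (valid since $[a]=[b]+q^{b}[a-b]$, as one checks from $(q^a-1)=(q^b-1)+q^b(q^{a-b}-1)$); with $a=n$, $b=k+i$ this gives $[n]-[k+i]=q^{k+i}[n-k-i]$, hence $\bigl<\widehat{x\cap y},\widehat{x+y}\bigr>=q^{k+i}[k-i][n-k-i]$, as claimed.

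The main obstacle, such as it is, is just keeping the bookkeeping of dimensions and the $q$-power manipulations straight; there is no conceptual difficulty, since all the heavy lifting was done in Lemma \ref{introlem1} and Corollary \ref{introcor}, and the containment $x\cap y\subseteq x+y$ that pins down the intersection dimension is elementary linear algebra. I would present the proof as "Routine using Corollary \ref{introcor}, Lemmas \ref{introlem0}, \ref{sumdim}, \ref{introlem1}, and linear algebra," matching the style of the preceding lemmas.
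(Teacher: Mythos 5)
Your proposal is correct and follows exactly the route the paper takes: its proof of this lemma is the one-liner ``Routine using Lemmas \ref{introlem0}, \ref{sumdim}, \ref{introlem1} and linear algebra,'' and you have simply spelled out that routine computation (the dimensions $k-i$ and $k+i$, the containment $x\cap y\subseteq x+y$, and the identity $[n]-[k+i]=q^{k+i}[n-k-i]$), all of which checks out.
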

\begin{proof}
Routine using Lemmas \ref{introlem0}, \ref{sumdim}, \ref{introlem1} and linear algebra.
\end{proof}

\begin{theorem}
\label{innprodmat21}
Let $x,y\in X$ satisfy $1<\partial(x,y)<k$. In the following table, for each vector $u$ in the header column, and each vector $v$ in the header row, the $(u,v)$-entry of the table gives the inner product $\left<u,v\right>$. Write $i=\partial(x,y)$.

\begin{center}
    \begin{tabular}{c|c c c c}
        $\left<\;,\;\right>$ & $\widehat{x}$ & $\widehat{y}$ & $\widehat{x\cap y}$ & $\widehat{x+y}$\\ 
         \hline
         \\
         $\widehat{x}$ & $q^k[k][n-k]$ & $[n][k-i]-[k]^2$ & $q^k[k-i][n-k]$ & $q^{k+i}[k][n-k-i]$\\
         \\
        $\widehat{y}$ & $[n][k-i]-[k]^2$ & $q^{k}[k][n-k]$ & $q^k[k-i][n-k]$ & $q^{k+i}[k][n-k-i]$\\
        \\
        $\widehat{x\cap y}$&$q^k[k-i][n-k]$ & $q^k[k-i][n-k]$ & $q^{k-i}[k-i][n-k+i]$ & $q^{k+i}[k-i][n-k-i]$\\
        \\
        $\widehat{x+y}$&$q^{k+i}[k][n-k-i]$ & $q^{k+i}[k][n-k-i]$ & $q^{k+i}[k-i][n-k-i]$ & $q^{k+i}[k+i][n-k-i]$\\
        
    \end{tabular}
\end{center}
\end{theorem}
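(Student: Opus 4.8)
The plan is straightforward: every entry of the table has already been recorded in one of Lemmas \ref{cor1}, \ref{5inner1}, \ref{5inner2}, \ref{5inner3} and Corollary \ref{cor2}, or is an immediate special case of the general inner-product formula of Lemma \ref{introlem1}. Since the inner product is symmetric, it suffices to verify the ten entries on or above the main diagonal, and then note that the array as displayed is itself symmetric across that diagonal, so the remaining six entries require nothing new.

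I would organize the ten verifications by which vectors are involved. The entries $\bigl<\widehat{x},\widehat{x}\bigr>$ and $\bigl<\widehat{y},\widehat{y}\bigr>$ are Corollary \ref{cor2}. The entry $\bigl<\widehat{x},\widehat{y}\bigr>$ is Lemma \ref{cor1}, with $i=\partial(x,y)$. The entries $\bigl<\widehat{x},\widehat{x\cap y}\bigr>$ and $\bigl<\widehat{y},\widehat{x\cap y}\bigr>$ are Lemma \ref{5inner1}, and the entries $\bigl<\widehat{x},\widehat{x+y}\bigr>$ and $\bigl<\widehat{y},\widehat{x+y}\bigr>$ are Lemma \ref{5inner2}. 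Finally, the three entries $\bigl\Vert\widehat{x\cap y}\bigr\Vert^2$, $\bigl<\widehat{x\cap y},\widehat{x+y}\bigr>$, $\bigl\Vert\widehat{x+y}\bigr\Vert^2$ are Lemma \ref{5inner3}. Assembling these ten values into the $4\times 4$ array and filling in the lower triangle by symmetry yields exactly the displayed table.

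For completeness I would also indicate how the feeder lemmas themselves arise, since the theorem is in essence a bookkeeping consolidation of Lemma \ref{introlem1}. For a pair $u,v$ chosen among $x,y,x\cap y,x+y$, Lemma \ref{introlem1} gives $\bigl<\widehat{u},\widehat{v}\bigr>=[n][h]-[i'][j']$, where $i'=\dim u$, $j'=\dim v$, $h=\dim(u\cap v)$, and the diagonal case is Corollary \ref{introcor}. The required dimensions come from $\dim x=\dim y=k$, Lemma \ref{introlem0} ($\dim(x\cap y)=k-i$), and Lemma \ref{sumdim} ($\dim(x+y)=k+i$), together with the elementary inclusions $x\cap y\subseteq x\subseteq x+y$ and $x\cap y\subseteq y\subseteq x+y$, which give for instance $x\cap(x+y)=x$ and $(x\cap y)\cap(x+y)=x\cap y$. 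Simplifying the resulting differences of $q$-integers uses only the identity $[a]-[b]=q^{b}[a-b]$ for integers $a\geq b$ (e.g.\ $[n]-[k]=q^{k}[n-k]$), which turns $[n][k-i]-[k][k-i]$ into $q^{k}[k-i][n-k]$, and so on.

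There is no genuine mathematical obstacle here; the only care required is clerical — correctly reading off the three dimension parameters for each of the ten pairs, and not confusing $k-i$ with $k+i$ or $n-k-i$ with $n-k+i$ when applying the $q$-integer identity. I therefore expect the actual write-up to be brief: invoke the five feeder results, remark that the inner product is symmetric, and exhibit the table.
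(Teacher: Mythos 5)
Your proposal is correct and matches the paper's proof, which simply combines Corollary \ref{cor2} with Lemmas \ref{cor1} and \ref{5inner1}--\ref{5inner3}; the extra detail you give on how those feeder lemmas follow from Lemma \ref{introlem1} is accurate but not needed beyond what the cited results already contain.
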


\begin{proof}
Combine Corollary \ref{cor2} and Lemmas \ref{cor1}, \ref{5inner1}--\ref{5inner3}.
\end{proof}

Next we compute the inner products for case (ii).

\begin{lemma}
\label{bcx}
For $x,y\in X$ such that $1<\partial(x,y)<k$, we have 
\begin{align}
    \bigl<B_{xy},\widehat{x}\bigr>&=q^{2i+1}[k-i][n-k-i]\Bigl([n][k-1]-[k]^2\Bigr),\label{binnerx}\\
    \bigl<C_{xy},\widehat{x}\bigr>&=[i]^2\Bigl([n][k-1]-[k]^2\Bigr), \label{cinnerx}
\end{align}
where $i=\partial(x,y)$.
\end{lemma}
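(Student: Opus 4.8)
The plan is to exploit the fact, established in Lemma \ref{introlem2}, that summing $\widehat{z}$ over $z\in\Gamma(x)$ yields $\theta_1\widehat{x}$, and to split that sum according to the distance from $y$. Concretely, for $z\in\Gamma(x)$ we have $|\partial(y,z)-\partial(x,y)|\le 1$, so $\Gamma(x)$ is partitioned into the three sets $\mathcal{B}_{xy}$ (distance $i+1$ from $y$), $\mathcal{A}_{xy}$ (distance $i$ from $y$), and $\mathcal{C}_{xy}$ (distance $i-1$ from $y$), where $i=\partial(x,y)$. First I would take the inner product of the identity $\sum_{z\in\Gamma(x)}\widehat{z}=\theta_1\widehat{x}$ with $\widehat{x}$, which gives $\theta_1\Vert\widehat{x}\Vert^2$ on the right; by Corollary \ref{cor2} this is $\theta_1 q^k[k][n-k]$. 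On the left, $\langle\widehat{z},\widehat{x}\rangle$ depends only on $\partial(x,z)=1$, so by Lemma \ref{cor1} every term equals $[n][k-1]-[k]^2$; this alone does not separate $B$ from $C$, so one inner product identity is not enough.

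The key additional input is a second linear relation obtained by taking the inner product with $\widehat{y}$ instead. From $\sum_{z\in\Gamma(x)}\langle\widehat{z},\widehat{y}\rangle=\theta_1\langle\widehat{x},\widehat{y}\rangle$, and using Lemma \ref{cor1} to evaluate $\langle\widehat{z},\widehat{y}\rangle$ as $[n][k-(i+1)]-[k]^2$ on $\mathcal{B}_{xy}$, as $[n][k-i]-[k]^2$ on $\mathcal{A}_{xy}$, and as $[n][k-(i-1)]-[k]^2$ on $\mathcal{C}_{xy}$, together with $|\mathcal{B}_{xy}|=b_i$, $|\mathcal{C}_{xy}|=c_i$, and $|\mathcal{A}_{xy}|=a_i=\kappa-b_i-c_i$ from (\ref{valency}), one gets a linear equation in the two unknowns $\langle B_{xy},\widehat{x}\rangle$ and $\langle C_{xy},\widehat{x}\rangle$ — wait, more precisely it is an equation in the same two quantities once we observe (from distance-transitivity, or directly from Lemmas \ref{orbitbxy}, \ref{orbitcxy}) that $\langle\widehat{z},\widehat{x}\rangle$ is constant on each of $\mathcal{B}_{xy}$ and $\mathcal{C}_{xy}$, hence $\langle B_{xy},\widehat{x}\rangle=|\mathcal{B}_{xy}|\cdot([n][k-1]-[k]^2)$ and $\langle C_{xy},\widehat{x}\rangle=|\mathcal{C}_{xy}|\cdot([n][k-1]-[k]^2)$ directly. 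So in fact the cleanest route is: since every $z$ in $\mathcal{B}_{xy}$ (resp. $\mathcal{C}_{xy}$) is at distance $1$ from $x$, Lemma \ref{cor1} gives $\langle\widehat{z},\widehat{x}\rangle=[n][k-1]-[k]^2$ for each such $z$, and summing over the $|\mathcal{B}_{xy}|=b_i=q^{2i+1}[k-i][n-k-i]$ terms (resp. $|\mathcal{C}_{xy}|=c_i=[i]^2$ terms) from (\ref{sizebc}) yields exactly (\ref{binnerx}) and (\ref{cinnerx}).

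So the proof is essentially immediate: expand $B_{xy}=\sum_{z\in\mathcal{B}_{xy}}\widehat{z}$, pair with $\widehat{x}$, use $\partial(x,z)=1$ and Lemma \ref{cor1}, then use the cardinality $|\mathcal{B}_{xy}|=b_i$ noted after Definition \ref{calbcdef} with the value of $b_i$ from (\ref{sizebc}); likewise for $C_{xy}$ with $|\mathcal{C}_{xy}|=c_i$ and $c_i=[i]^2$. I would write it as: $\langle B_{xy},\widehat{x}\rangle=\sum_{z\in\mathcal{B}_{xy}}\langle\widehat{z},\widehat{x}\rangle=|\mathcal{B}_{xy}|\bigl([n][k-1]-[k]^2\bigr)=q^{2i+1}[k-i][n-k-i]\bigl([n][k-1]-[k]^2\bigr)$, and similarly for $C_{xy}$. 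There is no real obstacle here; the only thing to be careful about is confirming that $\mathcal{B}_{xy}$ and $\mathcal{C}_{xy}$ are genuinely subsets of $\Gamma(x)$ (true by Definition \ref{calbcdef}) so that the distance-$1$ case of Lemma \ref{cor1} applies uniformly, and recording the standard identities $|\mathcal{B}_{xy}|=b_i$, $|\mathcal{C}_{xy}|=c_i$, which were already observed immediately after Definition \ref{calbcdef}.

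\begin{proof}
Write $i=\partial(x,y)$. By Definition \ref{bcdef}, $B_{xy}=\sum_{z\in\mathcal{B}_{xy}}\widehat{z}$ and $C_{xy}=\sum_{z\in\mathcal{C}_{xy}}\widehat{z}$. By Definition \ref{calbcdef}, every $z\in\mathcal{B}_{xy}$ satisfies $z\in\Gamma(x)$, so $\partial(x,z)=1$; likewise every $z\in\mathcal{C}_{xy}$ satisfies $\partial(x,z)=1$. By Lemma \ref{cor1}, for any $z\in X$ with $\partial(x,z)=1$ we have $\bigl<\widehat{z},\widehat{x}\bigr>=[n][k-1]-[k]^2$. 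Therefore
\begin{equation*}
    \bigl<B_{xy},\widehat{x}\bigr>=\sum_{z\in\mathcal{B}_{xy}}\bigl<\widehat{z},\widehat{x}\bigr>=\bigl|\mathcal{B}_{xy}\bigr|\Bigl([n][k-1]-[k]^2\Bigr),
\end{equation*}
\begin{equation*}
    \bigl<C_{xy},\widehat{x}\bigr>=\sum_{z\in\mathcal{C}_{xy}}\bigl<\widehat{z},\widehat{x}\bigr>=\bigl|\mathcal{C}_{xy}\bigr|\Bigl([n][k-1]-[k]^2\Bigr).
\end{equation*}
As observed after Definition \ref{calbcdef}, $\bigl|\mathcal{B}_{xy}\bigr|=b_i$ and $\bigl|\mathcal{C}_{xy}\bigr|=c_i$. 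By (\ref{sizebc}), $b_i=q^{2i+1}[k-i][n-k-i]$ and $c_i=[i]^2$. Substituting these values gives (\ref{binnerx}) and (\ref{cinnerx}).
\end{proof}
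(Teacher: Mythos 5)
Your proof is correct and matches the paper's argument: expand $B_{xy}$ and $C_{xy}$ as sums over $\mathcal{B}_{xy}$ and $\mathcal{C}_{xy}$, apply Lemma \ref{cor1} with $\partial(x,z)=1$ to each term, and multiply by the cardinalities $b_i$ and $c_i$ from (\ref{sizebc}). The exploratory detour through Lemma \ref{introlem2} in your preamble is unnecessary, as you yourself noted before settling on the direct computation.
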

\begin{proof}
We first prove (\ref{binnerx}). In the left equation of Definition \ref{bcdef}, take the inner product of each side with $\widehat{x}$. Evaluate the result using Lemma \ref{cor1} and (\ref{sizebc}) to get (\ref{binnerx}).

We have now verified (\ref{binnerx}). (\ref{cinnerx}) is obtained in a similar fashion.
\end{proof}

\begin{lemma}
\label{bcy}
For $x,y\in X$ such that $1<\partial(x,y)<k$, we have
\begin{align}
    \bigl<B_{xy},\widehat{y}\bigr>&=q^{2i+1}[k-i][n-k-i]\Bigl([n][k-i-1]-[k]^2\Bigr),\label{binnery}\\
    \bigl<C_{xy},\widehat{y}\bigr>&=[i]^2\Bigl([n][k-i+1]-[k]^2\Bigr),\label{cinnery}
\end{align}
where $i=\partial(x,y)$.
\end{lemma}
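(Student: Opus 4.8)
The plan is to mimic the proof of Lemma \ref{bcx}: in the left equation of Definition \ref{bcdef}, namely $B_{xy}=\sum_{z\in\mathcal{B}_{xy}}\widehat{z}$, take the inner product of each side with $\widehat{y}$ to obtain
\begin{equation*}
    \bigl<B_{xy},\widehat{y}\bigr>=\sum_{z\in\mathcal{B}_{xy}}\bigl<\widehat{z},\widehat{y}\bigr>.
\end{equation*}
For each $z\in\mathcal{B}_{xy}$ we have $\partial(z,y)=i+1$ by Definition \ref{calbcdef}, so Lemma \ref{cor1} gives $\bigl<\widehat{z},\widehat{y}\bigr>=[n][k-i-1]-[k]^2$, a quantity not depending on the particular $z$. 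Since $|\mathcal{B}_{xy}|=b_i=q^{2i+1}[k-i][n-k-i]$ by the remark following Definition \ref{calbcdef} together with (\ref{sizebc}), we get (\ref{binnery}) immediately. For (\ref{cinnery}) we do the same with $C_{xy}=\sum_{z\in\mathcal{C}_{xy}}\widehat{z}$: each $z\in\mathcal{C}_{xy}$ satisfies $\partial(z,y)=i-1$, so $\bigl<\widehat{z},\widehat{y}\bigr>=[n][k-i+1]-[k]^2$ by Lemma \ref{cor1}, and $|\mathcal{C}_{xy}|=c_i=[i]^2$ by (\ref{sizebc}); multiplying yields (\ref{cinnery}).

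The one point that needs a word of care is that Lemma \ref{cor1} is being applied with first argument a vertex $z$ at distance $i+1$ (resp. $i-1$) from $y$; since $1<i<k$, these distances satisfy $1\le i-1$ and $i+1\le k$, so they lie in the valid range $0\le\partial\le k$ for the Grassmann graph and $[k-i-1]$, $[k-i+1]$ are honest $q$-analogues of nonnegative integers. No step is a genuine obstacle here — the lemma is a direct bookkeeping consequence of Lemma \ref{cor1} and the intersection numbers (\ref{sizebc}), exactly parallel to Lemma \ref{bcx}, the only difference being that the relevant distance from $y$ is shifted by $\pm1$ relative to the distance from $x$.
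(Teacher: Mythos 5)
Your proposal is correct and follows exactly the paper's argument: expand $B_{xy}$ and $C_{xy}$ via Definition \ref{bcdef}, apply Lemma \ref{cor1} with $\partial(z,y)=i\pm 1$, and multiply by the cardinalities $b_i$ and $c_i$ from (\ref{sizebc}). The paper states this more tersely but the content is identical.
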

\begin{proof}
We first prove (\ref{binnery}). In the left equation of Definition \ref{bcdef}, take the inner product of each side with $\widehat{y}$. Evaluate the result using Lemma \ref{cor1} and (\ref{sizebc}) to get (\ref{binnery}).

We have now verified (\ref{binnery}). (\ref{cinnery}) is obtained in a similar fashion.
\end{proof}

\begin{lemma}
\label{bintsum}
For $x,y\in X$ such that $1<\partial(x,y)<k$, we have
\begin{align}
    \Bigl<B_{xy},\widehat{x\cap y}\Bigr>&=q^{2i+1}[k-i][n-k-i]\Bigl([n][k-i-1]-[k-i][k]\Bigr),\label{binnercap}\\
    \Bigl<B_{xy},\widehat{x+y}\Bigr>&=q^{2i+1}[k-i][n-k-i]\Bigl([n][k-1]-[k][k+i]\Bigr),\label{binnerplus}
\end{align}
where $i=\partial(x,y)$.
\end{lemma}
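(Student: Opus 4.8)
The plan is to compute each inner product by expanding $B_{xy}=\sum_{z\in\mathcal{B}_{xy}}\widehat{z}$ using Definition \ref{bcdef}, so that $\bigl<B_{xy},\widehat{x\cap y}\bigr>=\sum_{z\in\mathcal{B}_{xy}}\bigl<\widehat{z},\widehat{x\cap y}\bigr>$, and then to evaluate each term $\bigl<\widehat{z},\widehat{x\cap y}\bigr>$ via Lemma \ref{introlem1}. For this I need to know, for each $z\in\mathcal{B}_{xy}$, the dimension of $z\cap(x\cap y)$. Recall $z\in\Gamma(x)$ with $\partial(y,z)=\partial(x,y)+1=i+1$; write $i=\partial(x,y)$. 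Since $z$ is adjacent to $x$, we have $\dim(x\cap z)=k-1$, and by Lemma \ref{introlem0}, $\dim(z\cap y)=k-(i+1)=k-i-1$. I claim $z\cap(x\cap y)=z\cap y$: indeed $z\cap y\subseteq x$ is forced because $\dim(x\cap y)=k-i$ and $\dim(x\cap z)=k-1$ together with the modular law (Lemma \ref{modularity}) pin down $\dim((x\cap y)\cap z)$; more directly, $z\cap y$ has dimension $k-i-1$ and is contained in $y$, and one checks it lies in $x$ by a dimension count inside $z$. Hence $\dim(z\cap(x\cap y))=k-i-1$ for every $z\in\mathcal{B}_{xy}$, so by Lemma \ref{introlem1},
\begin{equation*}
\bigl<\widehat{z},\widehat{x\cap y}\bigr>=[n][k-i-1]-[k][k-i],
\end{equation*}
using $\dim z=k$ and $\dim(x\cap y)=k-i$. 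Since this value is the same for all $z\in\mathcal{B}_{xy}$ and $|\mathcal{B}_{xy}|=b_i=q^{2i+1}[k-i][n-k-i]$ by (\ref{sizebc}) and the remark after Definition \ref{calbcdef}, summing gives (\ref{binnercap}).

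For (\ref{binnerplus}) I proceed the same way: I need $\dim(z\cap(x+y))$ for $z\in\mathcal{B}_{xy}$. Here I expect $z\subseteq x+y$ to fail in general since $\partial(y,z)=i+1>\partial(x,y)$, so $z$ is "outside" the geodesic. Instead I compute $\dim(z\cap(x+y))$ via the modular law applied to $z$ and $x+y$: $\dim(z\cap(x+y))=\dim z+\dim(x+y)-\dim(z+x+y)$. Since $z$ is adjacent to $x$, $x+z$ has dimension $k+1$ and meets $y$; one argues that $z+x+y$ has dimension $k+i+1$ (it strictly contains $x+y$, whose dimension is $k+i$ by Lemma \ref{sumdim}, because $z\not\subseteq x+y$). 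Then $\dim(z\cap(x+y))=k+(k+i)-(k+i+1)=k-1$. Plugging $\dim z=k$, $\dim(x+y)=k+i$, $\dim(z\cap(x+y))=k-1$ into Lemma \ref{introlem1} yields
\begin{equation*}
\bigl<\widehat{z},\widehat{x+y}\bigr>=[n][k-1]-[k][k+i],
\end{equation*}
again independent of $z$, and multiplying by $|\mathcal{B}_{xy}|=q^{2i+1}[k-i][n-k-i]$ gives (\ref{binnerplus}).

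The one genuinely nontrivial point — the main obstacle — is justifying the two dimension claims, namely that $\dim(z\cap(x\cap y))=k-i-1$ and $\dim(z\cap(x+y))=k-1$ for every $z\in\mathcal{B}_{xy}$, since these are the facts that make the terms in the sum constant. Both follow from the modular identity (Lemma \ref{modularity}) together with Lemma \ref{introlem0}, Lemma \ref{sumdim}, and the adjacency condition $\dim(x\cap z)=k-1$; the key sub-claim is that $z\not\subseteq x+y$ and $z+(x+y)$ has dimension exactly $k+i+1$, which I would verify by choosing a basis adapted to the flag $x\cap y\subseteq x\cap z\subseteq x$ and extending, in the style of Lemma \ref{orbitcxy}. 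Equivalently, since by Lemma \ref{orbitbxy} the set $\mathcal{B}_{xy}$ is a single $\Stab(x,y)$-orbit and $\Stab(x,y)$ fixes both $x\cap y$ and $x+y$, the quantities $\dim(z\cap(x\cap y))$ and $\dim(z\cap(x+y))$ are automatically independent of $z\in\mathcal{B}_{xy}$; one then only needs to compute them for a single convenient representative $z$. Once these dimensions are in hand, the rest is the routine substitution into Lemma \ref{introlem1} and multiplication by $b_i$ described above.
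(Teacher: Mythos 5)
Your proposal is correct and follows essentially the same route as the paper: expand $B_{xy}$ as a sum over $\mathcal{B}_{xy}$, show the relevant dimensions $\dim\bigl(z\cap(x\cap y)\bigr)=k-i-1$ and $\dim\bigl(z\cap(x+y)\bigr)=k-1$ are constant on $\mathcal{B}_{xy}$, apply Lemma \ref{introlem1}, and multiply by $b_i$. The only difference is in how the ``nontrivial point'' you flag is settled: the paper gets both containments $z\cap y\subseteq x$ and $x\subseteq z+y$ in one stroke from Lemma \ref{contain} applied to $\partial(z,y)=\partial(z,x)+\partial(x,y)$, which is cleaner than your sketched dimension counts (though those, and your orbit-reduction alternative via Lemma \ref{orbitbxy}, can also be made to work).
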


\begin{proof} 
We first prove (\ref{binnercap}). Using the left equation of Definition \ref{bcdef}, we obtain
\begin{equation*}
    \Bigl<B_{xy},\widehat{x\cap y}\Bigr>=\sum_{z\in \mathcal{B}_{xy}}\Bigl<\widehat{z},\widehat{x\cap y}\Bigr>.
\end{equation*}

Let $z\in \mathcal{B}_{xy}$. By the definition of $\mathcal{B}_{xy}$, we have $\partial(z,y)=i+1$. By Lemma \ref{introlem0}, $\dim(z\cap y)=k-i-1$. Also, $\partial(z,y)=\partial(z,x)+\partial(x,y)$, so by Lemma \ref{contain}, $x\supseteq z\cap y$. Hence  
\begin{equation*}
    \dim(z\cap x\cap y)=\dim (z\cap y)=k-i-1.
\end{equation*}
Using Lemma \ref{introlem1} we obtain
\begin{equation*}
    \Bigl<\widehat{z},\widehat{x\cap y}\Bigr>=[n][k-i-1]-[k-i][k].
\end{equation*}

Use the value of $b_i$ in (\ref{sizebc}) to obtain (\ref{binnercap}).

Next we prove (\ref{binnerplus}). Using the left equation of Definition \ref{bcdef}, we obtain
\begin{equation*}
    \Bigl<B_{xy},\widehat{x+y}\Bigr>=\sum_{z\in \mathcal{B}_{xy}}\Bigl<\widehat{z},\widehat{x+y}\Bigr>.
\end{equation*}

Let $z\in \mathcal{B}_{xy}$. Recall that $\partial(z,y)=i+1$. By Lemma \ref{sumdim}, $\dim(z+y)=k+i+1$. Also recall that $\partial(z,y)=\partial(z,x)+\partial(x,y)$, so by Lemma \ref{contain}, $x\subseteq z+y$. Hence  
\begin{equation*}
    \dim(z+x+y)=\dim (z+y)=k+i+1.
\end{equation*}
By Lemma \ref{modularity},
\begin{equation*}
    \dim\bigl(z\cap (x+y)\bigr)=\dim z +\dim(x+y)-\dim (z+x+y)=k-1.
\end{equation*}

Using Lemma \ref{introlem1} we obtain
\begin{equation*}
    \Bigl<\widehat{z},\widehat{x+y}\Bigr>=[n][k-1]-[k][k+i].
\end{equation*}

Use the value of $b_i$ in (\ref{sizebc}) to obtain (\ref{binnerplus}).
\end{proof}

\begin{lemma}
\label{cintsum}
For $x,y\in X$ such that $1<\partial(x,y)<k$, we have
\begin{align}
    \Bigl<C_{xy},\widehat{x\cap y}\Bigr>&=q^k[i]^2[k-i][n-k],\label{cinnercap}\\
    \Bigl<C_{xy},\widehat{x+y}\Bigr>&=q^{k+i}[i]^2[k][n-k-i],\label{cinnerplus}
\end{align}
where $i=\partial(x,y)$.
\end{lemma}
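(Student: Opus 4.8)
The plan is to imitate the proof of Lemma~\ref{bintsum}, evaluating each inner product termwise over $\mathcal{C}_{xy}$ and exploiting the fact that every $z\in\mathcal{C}_{xy}$ lies on a geodesic from $x$ to $y$.

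First I would use the right-hand equation of Definition~\ref{bcdef} to write $\langle C_{xy},\widehat{x\cap y}\rangle=\sum_{z\in\mathcal{C}_{xy}}\langle\widehat{z},\widehat{x\cap y}\rangle$ and likewise $\langle C_{xy},\widehat{x+y}\rangle=\sum_{z\in\mathcal{C}_{xy}}\langle\widehat{z},\widehat{x+y}\rangle$. Fix $z\in\mathcal{C}_{xy}$ and set $i=\partial(x,y)$; then $\partial(x,z)=1$ and $\partial(z,y)=i-1$, so $\partial(x,y)=\partial(x,z)+\partial(z,y)$. Hence Lemma~\ref{contain} gives $x\cap y\subseteq z\subseteq x+y$. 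From $x\cap y\subseteq z$ we get $z\cap(x\cap y)=x\cap y$, which has dimension $k-i$ by Lemma~\ref{introlem0}; from $z\subseteq x+y$ we get $z\cap(x+y)=z$, of dimension $k$.

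Feeding these dimensions into Lemma~\ref{introlem1} (with $\dim z=k$, $\dim(x\cap y)=k-i$, $\dim(x+y)=k+i$ by Lemma~\ref{sumdim}) yields $\langle\widehat{z},\widehat{x\cap y}\rangle=[n][k-i]-[k][k-i]$ and $\langle\widehat{z},\widehat{x+y}\rangle=[n][k]-[k][k+i]$, both independent of the choice of $z$. Summing over the $c_i=[i]^2$ elements of $\mathcal{C}_{xy}$ (using (\ref{sizebc})) gives $\langle C_{xy},\widehat{x\cap y}\rangle=[i]^2[k-i]\bigl([n]-[k]\bigr)$ and $\langle C_{xy},\widehat{x+y}\rangle=[i]^2[k]\bigl([n]-[k+i]\bigr)$. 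Finally I would simplify these using the identity $[n]-[m]=q^{m}[n-m]$, immediate from (\ref{notation}), obtaining the claimed $q^{k}[i]^2[k-i][n-k]$ and $q^{k+i}[i]^2[k][n-k-i]$.

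There is essentially no obstacle here; the only point requiring care is getting the geodesic decomposition the right way round. For $z\in\mathcal{C}_{xy}$ the vertex $z$ sits \emph{between} $x$ and $y$, so both $x\cap y\subseteq z$ and $z\subseteq x+y$ hold and no appeal to modularity is needed, in contrast with Lemma~\ref{bintsum}, where it was $x$ that sat between $z$ and $y$ and the second containment had to be combined with Lemma~\ref{modularity}.
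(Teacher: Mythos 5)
Your proposal is correct and follows essentially the same route as the paper: decompose the inner product over $\mathcal{C}_{xy}$, use Lemma \ref{contain} to get $x\cap y\subseteq z\subseteq x+y$ from the geodesic condition, compute the relevant intersection dimensions, apply Lemma \ref{introlem1}, and multiply by $c_i=[i]^2$. Your closing remark correctly identifies the one structural difference from Lemma \ref{bintsum}, namely that no appeal to modularity is needed here because $z$ lies between $x$ and $y$.
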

\begin{proof}
We first prove (\ref{cinnercap}). Using the right equation of Definition \ref{bcdef}, we obtain
\begin{equation*}
    \Bigl<C_{xy},\widehat{x\cap y}\Bigr>=\sum_{z\in \mathcal{C}_{xy}}\Bigl<\widehat{z},\widehat{x\cap y}\Bigr>.
\end{equation*}

Let $z\in \mathcal{C}_{xy}$. By the definition of $\mathcal{C}_{xy}$, we have $\partial(z,y)=i-1$. Since $\partial(x,y)=\partial(x,z)+\partial(z,y)$, we have $z\supseteq x\cap y$ by Lemma \ref{contain}. Hence  
\begin{equation*}
    \dim(z\cap x\cap y)=\dim (x\cap y)=k-i.
\end{equation*}
Using Lemma \ref{introlem1} we obtain
\begin{equation*}
    \Bigl<\widehat{z},\widehat{x\cap y}\Bigr>=[n][k-i]-[k][k-i]=q^k[k-i][n-k].
\end{equation*}

Use the value of $c_i$ in (\ref{sizebc}) to obtain (\ref{cinnercap}).

Next we prove (\ref{cinnerplus}). Using the right equation of Definition \ref{bcdef}, we obtain
\begin{equation*}
    \Bigl<C_{xy},\widehat{x+y}\Bigr>=\sum_{z\in \mathcal{C}_{xy}}\Bigl<\widehat{z},\widehat{x+y}\Bigr>.
\end{equation*}

Let $z\in \mathcal{C}_{xy}$. Recall that $\partial(x,y)=\partial(x,z)+\partial(z,y)$, so by Lemma \ref{contain}, $z\subseteq x+y$. Hence  
\begin{equation*}
    z\cap (x+y)=z.
\end{equation*}
Therefore,
\begin{equation*}
    \dim\bigl(z\cap (x+y)\bigr)=\dim z=k.
\end{equation*}

Using Lemma \ref{introlem1} we obtain
\begin{equation*}
    \Bigl<\widehat{z},\widehat{x+y}\Bigr>=[n][k]-[k][k+i]=q^{k+i}[k][n-k-i].
\end{equation*}

Use the value of $c_i$ in (\ref{sizebc}) to obtain (\ref{cinnerplus}).
\end{proof}

\begin{theorem}
\label{innprodmat23}
Let $x,y\in X$ satisfy $1<\partial(x,y)<k$. In the following table, for each vector $u$ in the header column, and each vector $v$ in the header row, the $(u,v)$-entry of the table gives the inner product $\left<u,v\right>$. Write $i=\partial(x,y)$.

\begin{center}
    \begin{tabular}{c|c c c c}
        $\left<\;,\;\right>$ & $\widehat{x}$ & $\widehat{y}$ & $\widehat{x\cap y}$ & $\widehat{x+y}$\\ 
         \hline
         \\
        $\widehat{x}$ & $q^k[k][n-k]$ & $[n][k-i]-[k]^2$ & $q^k[k-i][n-k]$ & $q^{k+i}[k][n-k-i]$ \\
        \\
        $\widehat{y}$ & $[n][k-i]-[k]^2$ & $q^k[k][n-k]$ & $q^k[k-i][n-k]$ & $q^{k+i}[k][n-k-i]$ \\
        \\
        $B_{xy}$& $\substack{q^{2i+1}[k-i][n-k-i]\cdot\\ \left([n][k-1]-[k]^2\right)}$ & $\substack{q^{2i+1}[k-i][n-k-i]\cdot\\ \left([n][k-i-1]-[k]^2\right)}$ & $\substack{q^{2i+1}[k-i][n-k-i]\cdot\\ \left([n][k-i-1]-[k-i][k]\right)}$ & $\substack{q^{2i+1}[k-i][n-k-i]\cdot\\ \left([n][k-1]-[k][k+i]\right)}$ \\
        \\
        $C_{xy}$& $[i]^2\bigl([n][k-1]-[k]^2\bigr)$& $[i]^2\bigl([n][k-i+1]-[k]^2\bigr)$ & $q^k[i]^2[k-i][n-k]$ & $q^{k+i}[i]^2[k][n-k-i]$\\
        
    \end{tabular}
\end{center}
\end{theorem}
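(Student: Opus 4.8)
The plan is to build the table entry by entry out of inner products that have already been established, so that the proof is essentially an act of transcription. The two rows headed by $\widehat{x}$ and $\widehat{y}$ are literally the first two rows of Theorem \ref{innprodmat21}: the values $\bigl<\widehat{x},\widehat{x}\bigr>$ and $\bigl<\widehat{y},\widehat{y}\bigr>$ come from Corollary \ref{cor2}, the value $\bigl<\widehat{x},\widehat{y}\bigr>$ comes from Lemma \ref{cor1}, and the values $\bigl<\widehat{x},\widehat{x\cap y}\bigr>$, $\bigl<\widehat{y},\widehat{x\cap y}\bigr>$, $\bigl<\widehat{x},\widehat{x+y}\bigr>$, $\bigl<\widehat{y},\widehat{x+y}\bigr>$ come from Lemmas \ref{5inner1} and \ref{5inner2}. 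So I would simply copy these four columns into the first two rows.

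For the row headed $B_{xy}$, I would read off the four entries from the lemmas of the previous section: $\bigl<B_{xy},\widehat{x}\bigr>$ from (\ref{binnerx}) in Lemma \ref{bcx}, $\bigl<B_{xy},\widehat{y}\bigr>$ from (\ref{binnery}) in Lemma \ref{bcy}, and $\bigl<B_{xy},\widehat{x\cap y}\bigr>$, $\bigl<B_{xy},\widehat{x+y}\bigr>$ from (\ref{binnercap}) and (\ref{binnerplus}) in Lemma \ref{bintsum}. Similarly, for the row headed $C_{xy}$ I would use (\ref{cinnerx}) in Lemma \ref{bcx}, (\ref{cinnery}) in Lemma \ref{bcy}, and (\ref{cinnercap}), (\ref{cinnerplus}) in Lemma \ref{cintsum}. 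Assembling these eight values together with the eight from the first two rows yields the table.

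There is really no obstacle at the level of the theorem itself, since all the content has already been absorbed into the preceding lemmas. If I were proving those lemmas from scratch, the one place demanding care is the dimension bookkeeping for the individual summands $\widehat{z}$ with $z\in\mathcal{B}_{xy}$ or $z\in\mathcal{C}_{xy}$: one observes that $x$ lies on a geodesic between $z$ and $y$ (for $z\in\mathcal{B}_{xy}$) or that $z$ lies on a geodesic between $x$ and $y$ (for $z\in\mathcal{C}_{xy}$), invokes Lemma \ref{contain} to sandwich the relevant subspaces, computes $\dim\bigl(z\cap(x\cap y)\bigr)$ and $\dim\bigl(z\cap(x+y)\bigr)$ using Lemmas \ref{introlem0}, \ref{sumdim}, \ref{modularity}, feeds these into Lemma \ref{introlem1}, and finally multiplies by $\vert\mathcal{B}_{xy}\vert=b_i$ or $\vert\mathcal{C}_{xy}\vert=c_i$ from (\ref{sizebc}). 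Once this bookkeeping is in place, as it already is in Lemmas \ref{bcx}--\ref{cintsum}, the theorem follows at once by combining Corollary \ref{cor2} and Lemmas \ref{cor1}, \ref{5inner1}, \ref{5inner2}, \ref{bcx}, \ref{bcy}, \ref{bintsum}, \ref{cintsum}.
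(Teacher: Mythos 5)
Your proposal is correct and takes essentially the same approach as the paper, whose proof is the one-line instruction to combine Corollary \ref{cor2} with Lemmas \ref{cor1}, \ref{5inner1}, \ref{5inner2}, and \ref{bcx}--\ref{cintsum}. Your entry-by-entry attribution of each table cell to the corresponding prior result matches the paper exactly, and your remark about where the real work lives (the geodesic and dimension bookkeeping inside Lemmas \ref{bintsum} and \ref{cintsum}) accurately reflects how the paper distributes the content.
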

\begin{proof}
Combine Corollary \ref{cor2} and Lemmas \ref{cor1}, \ref{5inner1}, \ref{5inner2}, \ref{bcx}--\ref{cintsum}.
\end{proof}

Next we compute the inner products for case (iii). To do this, we first write the vectors $B_{xy}$ and $C_{xy}$ as linear combinations in the geometric basis for $\Fix(x,y)$. 

For $1<i<k$ let $M_i$ denote the matrix of inner products in Theorem \ref{innprodmat21}.

\begin{lemma}
\label{inverse}
For $1<i<k$ the inverse of the matrix $M_i$ is given by 
\begin{equation}
    \label{invmat}
    M_{i}^{-1}=\frac{1}{q^{k-i}(q-1)[i]^2[n]}\begin{pmatrix}
q^i&1&-q^i&-1\\
1 & q^i & -q^i & -1\\
-q^i & -q^i & \frac{q^i[k]-[i]}{[k-i]} & 1\\
-1 & -1 & 1 & \frac{q^i[n-k]-[i]}{q^{2i}[n-k-i]}
\end{pmatrix}.
\end{equation}
\end{lemma}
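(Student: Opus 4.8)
The plan is to verify the claimed formula by a direct matrix multiplication, after first noting that the inverse is well defined. Since $1<i<k$ we have $1\le\partial(x,y)<k$, so by Theorem \ref{1basis} the vectors $\widehat{x},\widehat{y},\widehat{x\cap y},\widehat{x+y}$ form a basis for $\Fix(x,y)$. Hence $M_i$, being the Gram matrix of these vectors with respect to the inner product $\left<\;,\;\right>$ on $E$, is invertible (indeed positive definite), so it suffices to exhibit a matrix $N$ with $M_iN=I_4$; such $N$ is automatically $M_i^{-1}$. Let $N$ denote the matrix on the right-hand side of (\ref{invmat}), write $\Delta=q^{k-i}(q-1)[i]^2[n]$ for its scalar prefactor, and set $N'=\Delta N$, so that $N'$ has entries that are polynomials (or short rational expressions) in $q$ and the $q$-integers $[k],[i],[n-k],[k-i],[n-k-i]$. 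Proving the lemma then amounts to showing $M_iN'=\Delta I_4$.

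The computation is driven by the elementary $q$-integer identities
\begin{equation*}
 [a+b]=[a]+q^a[b],\qquad [a]-[b]=q^b[a-b],\qquad q^a-1=(q-1)[a],
\end{equation*}
valid for integers $a,b$. These let one rewrite the off-diagonal entry of $M_i$ in whichever form is convenient; for instance, from $[k]=[k-i]+q^{k-i}[i]$ and $[n]=[k]+q^{k}[n-k]$ one gets $[n][k-i]-[k]^2=q^{k}[k-i][n-k]-q^{k-i}[i][n]$. I would then compute the sixteen entries of $M_iN'$, substituting the values from Theorem \ref{innprodmat21}, and check that the diagonal entries reduce to $\Delta$ and the off-diagonal entries to $0$. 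For example, the $(1,1)$-entry of $M_iN'$ is
\begin{equation*}
 q^{i}\,q^{k}[k][n-k]+\bigl([n][k-i]-[k]^2\bigr)-q^{i}\,q^{k}[k-i][n-k]-q^{k+i}[k][n-k-i],
\end{equation*}
and repeated application of the identities above collapses this to $q^{k-i}(q-1)[i]^2[n]=\Delta$; the entries in the third and fourth rows are where the denominators $[k-i]$ and $q^{2i}[n-k-i]$ appearing in the more exotic entries $\bigl(q^{i}[k]-[i]\bigr)/[k-i]$ and $\bigl(q^{i}[n-k]-[i]\bigr)/\bigl(q^{2i}[n-k-i]\bigr)$ of $N'$ must cancel cleanly.

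To keep the bookkeeping manageable I would exploit the symmetry under interchanging $x$ and $y$: both $M_i$ and $N'$ commute with the permutation matrix that swaps the first two coordinates (since that swap fixes $\widehat{x\cap y}$ and $\widehat{x+y}$ and interchanges $\widehat x$ and $\widehat y$), so it suffices to check the entries in rows $1$, $3$, $4$, the row‑$2$ entries then being forced. One may reduce the work further by diagonalizing this swap: in the basis $\widehat{x}+\widehat{y},\ \widehat{x}-\widehat{y},\ \widehat{x\cap y},\ \widehat{x+y}$ the vector $\widehat{x}-\widehat{y}$ is orthogonal to each of the other three (from the table in Theorem \ref{innprodmat21}, together with $\|\widehat x\|^2=\|\widehat y\|^2$), so $M_i$ becomes block diagonal with a $1\times1$ block $\left<\widehat x-\widehat y,\widehat x-\widehat y\right>=2q^{k-i}[i][n]$ and a $3\times3$ block; inverting the $3\times3$ block by its adjugate and transforming back to the original basis reproduces (\ref{invmat}).

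There is no conceptual difficulty here; the main obstacle is purely computational, namely controlling the several entries that are sums of four products of $q$-integers and only simplify after repeated, carefully ordered use of $[a+b]=[a]+q^a[b]$ and $[a]-[b]=q^b[a-b]$. Organizing the verification around the $x\leftrightarrow y$ symmetry (and optionally the block decomposition above) is what makes it tractable rather than unwieldy.
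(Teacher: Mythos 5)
Your proposal is correct and takes the same approach as the paper, whose proof consists of the single word ``Routine'': one verifies $M_iN=I_4$ by direct computation (your added observations that $M_i$ is an invertible Gram matrix and that the $x\leftrightarrow y$ swap symmetry cuts the work are sound but not needed). One small slip: the illustrative identity should read $[n][k-i]-[k]^2=q^{k}[k-i][n-k]-q^{k-i}[i][k]$, with $[k]$ rather than $[n]$ in the last factor; your displayed $(1,1)$-entry computation is nevertheless correct and does collapse to $q^{k-i}(q-1)[i]^2[n]$.
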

\begin{proof}
Routine.
\end{proof}

\begin{lemma}
\label{bc}
For $x,y\in X$ such that $1<\partial(x,y)<k$, we have
    \begin{align}
    B_{xy}&=q^{2i}[k-i][n-k-i]\widehat{x}-q^{2i}[n-k-i]\widehat{x\cap y}-q^i[k-i]\widehat{x+y},\label{blin}\\
    C_{xy}&=q[i-1]^2\widehat{x}+q^{i-1}\widehat{y}+q^{i}[i-1]\widehat{x\cap y}+[i-1]\widehat{x+y},\label{clin}
    \end{align}
where $i=\partial(x,y)$.
\end{lemma}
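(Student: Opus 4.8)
The plan is to express $B_{xy}$ and $C_{xy}$ in the geometric basis $\widehat{x},\widehat{y},\widehat{x\cap y},\widehat{x+y}$ for $\Fix(x,y)$. By Lemma \ref{fixcontain2}, both $B_{xy}$ and $C_{xy}$ lie in $\Fix(x,y)$, and by Theorem \ref{1basis} (Case $1\leq\partial(x,y)<k$) the four vectors $\widehat{x},\widehat{y},\widehat{x\cap y},\widehat{x+y}$ form a basis for $\Fix(x,y)$. Hence there are unique real scalars with
\begin{equation*}
    B_{xy}=\beta_1\widehat{x}+\beta_2\widehat{y}+\beta_3\widehat{x\cap y}+\beta_4\widehat{x+y},\qquad
    C_{xy}=\gamma_1\widehat{x}+\gamma_2\widehat{y}+\gamma_3\widehat{x\cap y}+\gamma_4\widehat{x+y}.
\end{equation*}
To find these scalars, I would take the inner product of each equation with each of the four geometric basis vectors in turn. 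This produces two linear systems $M_i(\beta_1,\beta_2,\beta_3,\beta_4)^{\mathsf T}=(\langle B_{xy},\widehat{x}\rangle,\langle B_{xy},\widehat{y}\rangle,\langle B_{xy},\widehat{x\cap y}\rangle,\langle B_{xy},\widehat{x+y}\rangle)^{\mathsf T}$ and similarly for $C_{xy}$, where $M_i$ is the Gram matrix of the geometric basis from Theorem \ref{innprodmat21}. The right-hand sides are exactly the $B_{xy}$- and $C_{xy}$-rows of the table in Theorem \ref{innprodmat23}.

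The next step is to solve these systems by applying $M_i^{-1}$ from Lemma \ref{inverse}. Concretely,
\begin{equation*}
    \begin{pmatrix}\beta_1\\\beta_2\\\beta_3\\\beta_4\end{pmatrix}
    =M_i^{-1}\begin{pmatrix}\langle B_{xy},\widehat{x}\rangle\\\langle B_{xy},\widehat{y}\rangle\\\langle B_{xy},\widehat{x\cap y}\rangle\\\langle B_{xy},\widehat{x+y}\rangle\end{pmatrix},
    \qquad
    \begin{pmatrix}\gamma_1\\\gamma_2\\\gamma_3\\\gamma_4\end{pmatrix}
    =M_i^{-1}\begin{pmatrix}\langle C_{xy},\widehat{x}\rangle\\\langle C_{xy},\widehat{y}\rangle\\\langle C_{xy},\widehat{x\cap y}\rangle\\\langle C_{xy},\widehat{x+y}\rangle\end{pmatrix}.
\end{equation*}
Carrying out the matrix multiplication and simplifying the resulting rational expressions in $q$ (using the identity $[a]-[b]=q^b[a-b]$ repeatedly, and $[n]=[k]+q^k[n-k]$ and its relatives to collapse the numerators) should yield the claimed coefficients: $q^{2i}[k-i][n-k-i]$, $0$, $-q^{2i}[n-k-i]$, $-q^i[k-i]$ for $B_{xy}$, and $q[i-1]^2$, $q^{i-1}$, $q^i[i-1]$, $[i-1]$ for $C_{xy}$. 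The fact that the $\widehat{y}$-coefficient of $B_{xy}$ vanishes is a useful internal consistency check, and it is plausible the authors note it or even derive it more conceptually (e.g. $B_{xy}$ being fixed by a larger group, or lying in a $\widehat{y}$-free subspace).

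The main obstacle is purely computational: the entries of $M_i^{-1}$ and of the right-hand-side vectors are unwieldy rational functions of $q^i$, $[i]$, $[k-i]$, $[n-k-i]$, $[k]$, $[n-k]$, $[n]$, and matching them against the compact target forms requires careful, systematic use of the $[m]$-arithmetic identities rather than brute expansion. A cleaner alternative that avoids inverting $M_i$ would be to verify the claimed identities directly: call the claimed right-hand side of (\ref{blin}) $B'$ and of (\ref{clin}) $C'$, observe that $B_{xy}-B'$ and $C_{xy}-C'$ lie in $\Fix(x,y)$, and show each is orthogonal to all four geometric basis vectors (hence zero, since the Gram matrix $M_i$ is nonsingular by Lemma \ref{inverse}); this reduces the problem to four scalar identities per equation, each checked from Theorems \ref{innprodmat21} and \ref{innprodmat23} plus $[m]$-arithmetic. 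I would present the proof via this orthogonality route, as it is the least error-prone, and relegate the algebraic simplifications to "routine."
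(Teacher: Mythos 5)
Your proposal is correct and follows essentially the same route as the paper: the paper also writes $B_{xy}$ and $C_{xy}$ as unknown linear combinations in the geometric basis, takes inner products with $\widehat{x},\widehat{y},\widehat{x\cap y},\widehat{x+y}$ to get the system $M_i(\text{coefficients})=(\text{entries of Theorem \ref{innprodmat23}})$, and applies $M_i^{-1}$ from Lemma \ref{inverse}. Your preferred ``orthogonality'' presentation (checking that $B_{xy}-B'$ and $C_{xy}-C'$ are orthogonal to the basis, hence zero since $M_i$ is nonsingular) is a trivially equivalent repackaging of the same computation, so there is no substantive difference.
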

\begin{proof}
Write 
    \begin{align}
    B_{xy}&=\alpha\widehat{x}+\beta\widehat{y}+\gamma\widehat{x\cap y}+\delta\widehat{x+y},\label{blinwrite}\\
        C_{xy}&=\alpha'\widehat{x}+\beta'\widehat{y}+\gamma'\widehat{x\cap y}+\delta'\widehat{x+y},\label{clinwrite}
    \end{align}
for $\alpha,\beta,\gamma,\delta,\alpha',\beta',\gamma',\delta'\in \mathbb{R}$. 

In each of (\ref{blinwrite}) and (\ref{clinwrite}), we take the inner product of either side with each of $\widehat{x}, \widehat{y}, \widehat{x\cap y}$, $\widehat{x+y}$ to obtain 
\begin{equation}
        \nonumber
        M_i\begin{pmatrix}
\alpha&\alpha'\\ \beta&\beta'\\ \gamma&\gamma'\\ \delta&\delta'
\end{pmatrix}=\begin{pmatrix}
\left<B_{xy},\widehat{x}\right>&\left<C_{xy},\widehat{x}\right>\\
\left<B_{xy},\widehat{y}\right>&\left<C_{xy},\widehat{y}\right>\\
\left<B_{xy},\widehat{x\cap y}\right>&\left<C_{xy},\widehat{x\cap y}\right>\\
\left<B_{xy},\widehat{x+y}\right>&\left<C_{xy},\widehat{x+y}\right>
\end{pmatrix}.
\end{equation}

Use the table in Theorem \ref{innprodmat23} and the matrix (\ref{invmat}) to obtain
\begin{equation}
\label{coeff}
        \begin{pmatrix}
\alpha&\alpha'\\ \beta&\beta'\\ \gamma&\gamma'\\ \delta&\delta'
\end{pmatrix}
=
\begin{pmatrix}
q^{2i}[k-i][n-k-i]&q[i-1]^2\\
0&q^{i-1}\\
-q^{2i}[n-k-i]&q^i[i-1]\\
-q^i[k-i]&[i-1]\\
\end{pmatrix}.
\end{equation}
The result follows.
\end{proof}

\begin{lemma}
\label{innbb}
For $x,y\in X$ such that $1<\partial(x,y)<k$, we have
\begin{equation}
    \begin{aligned}
    \nonumber
        \bigl<B_{xy},B_{xy}\bigr>=&\;q^{4i+2}[k-i][n-k-i]\biggl(q^{k-i-2}[n]\Bigl([k-i]+[n-k-i]\Bigr)\\
        &\qquad \qquad+[k-i][n-k-i]\Bigl([n][k-2]-[k]^2\Bigr)\biggr),
    \end{aligned}
\end{equation}
where $i=\partial(x,y)$.
\end{lemma}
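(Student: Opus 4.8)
The plan is to exploit the expression for $B_{xy}$ in the geometric basis for $\Fix(x,y)$ obtained in Lemma \ref{bc}. Taking the inner product of equation (\ref{blin}) with $B_{xy}$ gives
\begin{equation*}
    \bigl<B_{xy},B_{xy}\bigr>=q^{2i}[k-i][n-k-i]\bigl<B_{xy},\widehat{x}\bigr>-q^{2i}[n-k-i]\bigl<B_{xy},\widehat{x\cap y}\bigr>-q^{i}[k-i]\bigl<B_{xy},\widehat{x+y}\bigr>,
\end{equation*}
where $i=\partial(x,y)$. Each of the three inner products on the right has already been computed: $\bigl<B_{xy},\widehat{x}\bigr>$ in (\ref{binnerx}), $\bigl<B_{xy},\widehat{x\cap y}\bigr>$ in (\ref{binnercap}), and $\bigl<B_{xy},\widehat{x+y}\bigr>$ in (\ref{binnerplus}); equivalently they are the $(B_{xy},\,\cdot\,)$ entries of the table in Theorem \ref{innprodmat23}. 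So no new geometry is needed — the computation reduces to substitution and simplification.

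After substitution, the factor $q^{2i+1}[k-i][n-k-i]$ common to all three inner products can be pulled out, and one checks that in fact $q^{3i+1}[k-i][n-k-i]$ divides $\bigl<B_{xy},B_{xy}\bigr>$, leaving the scalar
\begin{equation*}
    q^{i}[k-i][n-k-i]\bigl([n][k-1]-[k]^2\bigr)-q^{i}[n-k-i]\bigl([n][k-i-1]-[k-i][k]\bigr)-[k-i]\bigl([n][k-1]-[k][k+i]\bigr).
\end{equation*}
The statement to be proved is then equivalent to the assertion that this expression equals
\begin{equation*}
    q^{k-1}[n]\bigl([k-i]+[n-k-i]\bigr)+q^{i+1}[k-i][n-k-i]\bigl([n][k-2]-[k]^2\bigr),
\end{equation*}
since multiplying the latter back by $q^{3i+1}[k-i][n-k-i]$ reproduces exactly the claimed formula.

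The remaining identity is purely a matter of manipulating the symbols $[m]=(q^m-1)/(q-1)$, and this is where the real work lies. The standard tools are $[a+b]=[a]+q^{a}[b]$ and its special case $[n]-[k]=q^{k}[n-k]$; I would apply these to rewrite $[k-1]$, $[k-i-1]$, $[k+i]$, $[k-2]$ in terms of $[k]$, $[k-i]$, $[n-k]$, $[n-k-i]$, then either clear the denominator $q-1$ and match coefficients in powers of $q$, or reduce both sides to a common combination of the products $[n][k-i]$, $[n]$, $[k]^2$, and $q^{k}[k][n-k]$. The bookkeeping is somewhat lengthy because the three products each split into several terms, but no step is conceptually difficult; the only genuine obstacle is organizing the cancellation so that the cross terms involving $[n][k-i-1]$, $[k-i][k]$, and $[k][k+i]$ combine correctly into the compact right-hand side. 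In the write-up I would display the reduction to the scalar identity above in full, and then verify that identity by the coefficient-matching method.
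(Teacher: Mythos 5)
Your proposal is correct and follows exactly the paper's proof: expand $\bigl<B_{xy},B_{xy}\bigr>$ via the expression (\ref{blin}) for $B_{xy}$ in the geometric basis, substitute the inner products (\ref{binnerx}), (\ref{binnercap}), (\ref{binnerplus}), and simplify. Your reduction to the scalar identity is accurate (it checks out numerically, e.g.\ at $q=2$, $n=7$, $k=3$, $i=2$ both sides equal $3904$), so the only difference from the paper is that you spell out the routine simplification in more detail.
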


\begin{proof}
Using (\ref{blin}), 
\begin{equation*}
    \bigl<B_{xy},B_{xy}\bigr>=q^{2i}[k-i][n-k-i]\bigl<B_{xy},\widehat{x}\bigr>-q^{2i}[n-k-i]\Bigl<B_{xy},\widehat{x\cap y}\Bigr>-q^{i}[k-i]\Bigl<B_{xy},\widehat{x+y}\Bigr>.
\end{equation*}
Evaluate the above equation using (\ref{binnerx}), (\ref{binnercap}), (\ref{binnerplus}) to obtain the result.
\end{proof}

\begin{lemma}
\label{innbc}
For $x,y\in X$ such that $1<\partial(x,y)<k$, we have 
\begin{equation*}
    \bigl<B_{xy},C_{xy}\bigr>=q^{2i+1}[k-i][n-k-i][i]^2\Bigl([n][k-2]-[k]^2\Bigr),
\end{equation*}
where $i=\partial(x,y)$.
\end{lemma}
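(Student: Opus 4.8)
The plan is to expand one of the two vectors in the geometric basis for $\Fix(x,y)$ and then use the inner products already computed in Section~\ref{innerproducts}. Concretely, I would start from the expansion of $B_{xy}$ given in (\ref{blin}),
\[
B_{xy}=q^{2i}[k-i][n-k-i]\,\widehat{x}-q^{2i}[n-k-i]\,\widehat{x\cap y}-q^i[k-i]\,\widehat{x+y},
\]
and take the inner product of each side with $C_{xy}$. This is the cleaner of the two symmetric options because the coefficient of $\widehat{y}$ in (\ref{blin}) vanishes, so only three inner products are needed, namely $\bigl<C_{xy},\widehat{x}\bigr>$, $\bigl<C_{xy},\widehat{x\cap y}\bigr>$, and $\bigl<C_{xy},\widehat{x+y}\bigr>$. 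These are supplied by (\ref{cinnerx}), (\ref{cinnercap}), (\ref{cinnerplus}): respectively $[i]^2\bigl([n][k-1]-[k]^2\bigr)$, $q^k[i]^2[k-i][n-k]$, and $q^{k+i}[i]^2[k][n-k-i]$.

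Substituting these three values and factoring out the common factor $q^{2i}[k-i][n-k-i][i]^2$ reduces the claim to the single scalar identity
\[
[n][k-1]-[k]^2-q^k[n-k]-q^k[k]=q\bigl([n][k-2]-[k]^2\bigr).
\]
I would prove this using the elementary telescoping identities for $q$-integers that follow from $[m]=(q^m-1)/(q-1)$: first $q^k[n-k]+[k]=[n]$ (so the left side becomes $[n]\bigl([k-1]-1\bigr)+[k]-[k]^2-q^k[k]$), then $[k-1]-1=q[k-2]$ and $1-[k]-q^k=-q[k]$. After these substitutions the left side collapses to $q[n][k-2]-q[k]^2$, which is exactly the right side. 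Multiplying back by the common factor and absorbing the leading $q$ gives $q^{2i+1}[k-i][n-k-i][i]^2\bigl([n][k-2]-[k]^2\bigr)$, as desired.

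The entire argument is routine algebra, so there is no genuine conceptual obstacle; the only place requiring care is the final scalar identity, where one must recognize the right grouping of $q$-integer terms (in particular the identity $q^k[n-k]+[k]=[n]$, which is the same simplification already used implicitly in the proof of Lemma~\ref{cintsum}). As a consistency check one could instead expand $C_{xy}$ via (\ref{clin}) and pair it against $B_{xy}$, using (\ref{binnerx}), (\ref{binnery}), (\ref{binnercap}), (\ref{binnerplus}); this gives a four-term sum that must simplify to the same answer, which provides a useful verification of the bookkeeping.
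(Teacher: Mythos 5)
Your proposal is correct and is essentially the paper's approach viewed in mirror image: the paper expands $C_{xy}$ via (\ref{clin}) and evaluates against $B_{xy}$ using (\ref{binnerx})--(\ref{binnerplus}), while you expand $B_{xy}$ via (\ref{blin}) and evaluate against $C_{xy}$ using (\ref{cinnerx}), (\ref{cinnercap}), (\ref{cinnerplus}); both rest on Lemma \ref{bc} and the inner products of Section \ref{innerproducts}. Your version needs one fewer term (since the $\widehat{y}$-coefficient in (\ref{blin}) vanishes), and your $q$-integer simplification checks out.
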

\begin{proof}
Using (\ref{clin}),
\begin{equation*}
    \bigl<B_{xy},C_{xy}\bigr>=q[i-1]^2\bigl<B_{xy},\widehat{x}\bigr>+q^{i-1}\bigl<B_{xy},\widehat{y}\bigr>+q^{i}[i-1]\Bigl<B_{xy},\widehat{x\cap y}\Bigr>+[i-1]\Bigl<B_{xy},\widehat{x+y}\Bigr>.
\end{equation*}
Evaluate the above equation using (\ref{binnerx}), (\ref{binnery}), (\ref{binnercap}), (\ref{binnerplus}) to obtain the result.
\end{proof} 

\begin{lemma}
\label{inncc}
For $x,y\in X$ such that $1<\partial(x,y)<k$, we have 
\begin{equation*}
    \bigl<C_{xy},C_{xy}\bigr>=[i]^2\biggl(q^{k-2}[n]\Bigl(2q[i-1]+q+1\Bigr)+[i]^2\Bigl([n][k-2]-[k]^2\Bigr)\biggr),
\end{equation*} where $i=\partial(x,y)$.
\end{lemma}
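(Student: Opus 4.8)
The plan is to follow the same route as the proofs of Lemmas \ref{innbb} and \ref{innbc}. Starting from the expression (\ref{clin}) for $C_{xy}$ in the geometric basis, I would expand just one of the two factors of $\bigl<C_{xy},C_{xy}\bigr>$ to obtain
\begin{equation*}
    \bigl<C_{xy},C_{xy}\bigr>=q[i-1]^2\bigl<C_{xy},\widehat{x}\bigr>+q^{i-1}\bigl<C_{xy},\widehat{y}\bigr>+q^{i}[i-1]\Bigl<C_{xy},\widehat{x\cap y}\Bigr>+[i-1]\Bigl<C_{xy},\widehat{x+y}\Bigr>,
\end{equation*}
where $i=\partial(x,y)$. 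Each of the four inner products on the right is already available: $\bigl<C_{xy},\widehat{x}\bigr>$ and $\bigl<C_{xy},\widehat{y}\bigr>$ are (\ref{cinnerx}) and (\ref{cinnery}) from Lemmas \ref{bcx} and \ref{bcy}, while $\bigl<C_{xy},\widehat{x\cap y}\bigr>$ and $\bigl<C_{xy},\widehat{x+y}\bigr>$ are (\ref{cinnercap}) and (\ref{cinnerplus}) from Lemma \ref{cintsum}.

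Substituting these four values, a factor of $[i]^2$ appears in every summand, so I would pull it out front; it then remains to show that
\begin{equation*}
    q[i-1]^2\bigl([n][k-1]-[k]^2\bigr)+q^{i-1}\bigl([n][k-i+1]-[k]^2\bigr)+q^{i+k}[i-1][k-i][n-k]+q^{k+i}[i-1][k][n-k-i]
\end{equation*}
equals $q^{k-2}[n]\bigl(2q[i-1]+q+1\bigr)+[i]^2\bigl([n][k-2]-[k]^2\bigr)$. To do this I would rewrite each bracket with the standard $q$-number identities $[a]+q^{a}[b]=[a+b]$ and $[a-b]=q^{-b}\bigl([a]-[b]\bigr)$; concretely $[k-1]=[k]-q^{k-1}$, $[k-i+1]=[k]-q^{k-i+1}[i-1]$, $[k-i]=[k]-q^{k-i}[i]$, $[n-k]=q^{-k}\bigl([n]-[k]\bigr)$, and $[n-k-i]=q^{-k-i}\bigl([n]-[k]\bigr)-q^{-i}[i]$. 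After these substitutions the left-hand side separates into a part proportional to $[k][n]$, a part proportional to $[k][i]$, a part proportional to $[k]^2$, and a part equal to $[n]$ times an expression free of $[k]$. The $[k][i]$ terms cancel in pairs; the coefficient of $[k][n]$ collapses to $[i]^2$ and that of $[k]^2$ to $-[i]^2$ via $q[i-1]^2+[i-1]+q^{i-1}+q^{i}[i-1]=[i]\bigl([i-1]+q^{i-1}\bigr)=[i]^2$, and (after writing $[k-2]=[k]-q^{k-2}[2]$) this already reproduces the $[i]^2[k][n]$ and $-[i]^2[k]^2$ contributions on the right.

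What is left is to match the remaining $[n]$-term, namely the multiple of $[n]$ that is free of $[k]$: on the left it works out to $-q^{k}[i-1]\bigl([i]+[i-1]+1\bigr)$, and on the right it is $q^{k-2}\bigl(2q[i-1]+q+1\bigr)-q^{k-2}[2][i]^2$. Equating these and cancelling $q^{k-2}$ reduces the whole lemma to the single identity $[2][i]^2=2q[i-1]+q+1+q^{2}[i-1]\bigl([i]+[i-1]+1\bigr)$, which I would verify by substituting $[i]=q[i-1]+1$ and expanding both sides as polynomials in $q$. This last $q$-algebra identity, together with keeping the various powers of $q$ correctly aligned through the substitutions, is the only real obstacle; everything else is bookkeeping entirely parallel to Lemmas \ref{innbb} and \ref{innbc}.
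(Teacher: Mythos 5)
Your proposal is correct and follows exactly the paper's own route: expand one factor of $\bigl<C_{xy},C_{xy}\bigr>$ via the expression (\ref{clin}) for $C_{xy}$ in the geometric basis, substitute the four inner products from Lemmas \ref{bcx}, \ref{bcy}, \ref{cintsum}, and simplify. The paper leaves the final $q$-number simplification as "routine," whereas you carry it out explicitly (and your reduction to the identity $[2][i]^2=2q[i-1]+q+1+q^{2}[i-1]\bigl([i]+[i-1]+1\bigr)$ checks out); this is just a more detailed account of the same argument.
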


\begin{proof}
Using (\ref{clin}),
\begin{equation*}
    \bigl<C_{xy},C_{xy}\bigr>=q[i-1]^2\bigl<C_{xy},\widehat{x}\bigr>+q^{i-1}\bigl<C_{xy},\widehat{y}\bigr>+q^{i}[i-1]\Bigl<C_{xy},\widehat{x\cap y}\Bigr>+[i-1]\Bigl<C_{xy},\widehat{x+y}\Bigr>.
\end{equation*}
Evaluate the above equation using (\ref{cinnerx}), (\ref{cinnery}), (\ref{cinnercap}), (\ref{cinnerplus}) to obtain the result.
\end{proof}

\begin{theorem}
\label{innprodmat22}
Let $x,y\in X$ satisfy $1<\partial(x,y)<k$. In the following table, for each vector $u$ in the header column, and each vector $v$ in the header row, the $(u,v)$-entry of the table gives the inner product $\left<u,v\right>$. Write $i=\partial(x,y)$.
\begin{center}
    \begin{tabular}{c|c c c c}
        $\left<\;,\;\right>$ & $\widehat{x}$ & $\widehat{y}$ & $B_{xy}$ & $C_{xy}$\\ 
         \hline
         \\
         $\widehat{x}$ & $q^k[k][n-k]$ & $[n][k-i]-[k]^2$ & $\substack{q^{2i+1}[k-i][n-k-i]\cdot\\ \left([n][k-1]-[k]^2\right)}$ & $\substack{[i]^2\left([n][k-1]-[k]^2\right)}$\\
         \\
        $\widehat{y}$ & $[n][k-i]-[k]^2$ & $q^{k}[k][n-k]$ & $\substack{q^{2i+1}[k-i][n-k-i]\cdot\\ \left([n][k-i-1]-[k]^2\right)}$ & $\substack{[i]^2\left([n][k-i+1]-[k]^2\right)}$\\
        \\
        $B_{xy}$&$\substack{q^{2i+1}[k-i][n-k-i]\cdot\\\left([n][k-1]-[k]^2\right)}$ & $\substack{q^{2i+1}[k-i][n-k-i]\cdot\\\left([n][k-i-1]-[k]^2\right)}$ & $\substack{q^{4i+2}[k-i][n-k-i]\cdot\\ \bigl(q^{k-i-2}[n]\left([k-i]+[n-k-i]\right)+\\
        [k-i][n-k-i]\left([n][k-2]-[k]^2\right)\bigr)}$ & $\substack{q^{2i+1}[k-i][n-k-i]\cdot\\ [i]^2\left([n][k-2]-[k]^2\right)}$\\
        \\
        $C_{xy}$&$\substack{[i]^2\left([n][k-1]-[k]^2\right)}$ & $\substack{[i]^2\left([n][k-i+1]-[k]^2\right)}$ & $\substack{q^{2i+1}[k-i][n-k-i]\cdot\\ [i]^2\left([n][k-2]-[k]^2\right)}$ & $\substack{[i]^2\bigl(q^{k-2}[n]\left(2q[i-1]+q+1\right)+\\
    [i]^2\left([n][k-2]-[k]^2\right)\bigr)}$\\
    \end{tabular}
\end{center}
\end{theorem}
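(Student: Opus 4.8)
The plan is to assemble the $4\times 4$ table entry by entry, treating it purely as a bookkeeping exercise that gathers previously established formulas. The four header vectors are $\widehat{x}$, $\widehat{y}$, $B_{xy}$, $C_{xy}$; by symmetry of the inner product the table is symmetric, so it suffices to verify the ten entries on or above the diagonal. First I would read off the $(\widehat{x},\widehat{x})$-entry from Corollary \ref{cor2} and the $(\widehat{y},\widehat{y})$-entry likewise, and the $(\widehat{x},\widehat{y})$-entry from Lemma \ref{cor1} with $i=\partial(x,y)$. Next I would fill in the $(\widehat{x},B_{xy})$, $(\widehat{y},B_{xy})$, $(\widehat{x},C_{xy})$, $(\widehat{y},C_{xy})$ entries directly from Lemmas \ref{bcx} and \ref{bcy}, which is exactly the content of equations (\ref{binnerx}), (\ref{cinnerx}), (\ref{binnery}), (\ref{cinnery}). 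That accounts for seven of the ten entries with no computation at all.

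The three remaining entries are $\langle B_{xy},B_{xy}\rangle$, $\langle B_{xy},C_{xy}\rangle$, and $\langle C_{xy},C_{xy}\rangle$. These are precisely the statements of Lemmas \ref{innbb}, \ref{innbc}, and \ref{inncc}, respectively, so the proof of the theorem consists of citing those three lemmas. In turn, each of those lemmas was proved by writing $B_{xy}$ and $C_{xy}$ in the geometric basis via Lemma \ref{bc} (equations (\ref{blin}), (\ref{clin})) and then expanding the inner product using bilinearity, substituting the values of $\langle B_{xy},\widehat{x}\rangle$, $\langle B_{xy},\widehat{y}\rangle$, $\langle B_{xy},\widehat{x\cap y}\rangle$, $\langle B_{xy},\widehat{x+y}\rangle$ from Lemmas \ref{bcx}--\ref{cintsum}, and similarly for $C_{xy}$. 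So the whole theorem reduces to combining Corollary \ref{cor2}, Lemma \ref{cor1}, Lemmas \ref{bcx}, \ref{bcy}, and Lemmas \ref{innbb}, \ref{innbc}, \ref{inncc}.

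The only real work, already carried out in the cited lemmas, is the algebraic simplification of the $[m]$-expressions: for instance in $\langle B_{xy},B_{xy}\rangle$ one must collect $q^{2i}[k-i][n-k-i]\langle B_{xy},\widehat{x}\rangle - q^{2i}[n-k-i]\langle B_{xy},\widehat{x\cap y}\rangle - q^{i}[k-i]\langle B_{xy},\widehat{x+y}\rangle$ and verify that the terms not involving $[n]$ cancel enough to produce the displayed factor $q^{k-i-2}[n]([k-i]+[n-k-i])$. The identity $[n][k-i]-[k][k-i]=q^{k}[k-i][n-k]$ (used already in Lemma \ref{cintsum}) and its relatives such as $[n][a]-[b][a]=q^{b}[a][n-b]$ are the workhorses here; applying them repeatedly is the main obstacle, though it is entirely routine and was dispatched in the individual lemma proofs.

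Therefore the proof of Theorem \ref{innprodmat22} is simply: \emph{Combine Corollary \ref{cor2} and Lemmas \ref{cor1}, \ref{bcx}, \ref{bcy}, \ref{innbb}, \ref{innbc}, \ref{inncc}.} I would expect the actual text of the proof in the paper to be exactly one such sentence, mirroring the style of the proofs of Theorems \ref{innprodmat21} and \ref{innprodmat23}.
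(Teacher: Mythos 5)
Your proposal is correct and matches the paper's proof, which reads verbatim: ``Combine Corollary \ref{cor2} and Lemmas \ref{cor1}, \ref{bcx}, \ref{bcy}, \ref{innbb}--\ref{inncc}.'' Your account of how the three diagonal-block lemmas \ref{innbb}--\ref{inncc} were themselves obtained (expanding $B_{xy}$, $C_{xy}$ in the geometric basis via Lemma \ref{bc} and substituting the inner products from Lemmas \ref{bcx}--\ref{cintsum}) is also exactly what the paper does.
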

\begin{proof}
Combine Corollary \ref{cor2} and Lemmas \ref{cor1}, \ref{bcx}, \ref{bcy}, \ref{innbb}--\ref{inncc}.
\end{proof}

\section{The combinatorial basis for $\Fix(x,y)$}
\label{more}

Pick $x,y\in X$ such that $1<\partial(x,y)<k$. In this section we prove that the vectors in (\ref{basis2}) form a basis for the subspace $\Fix(x,y)$. We formally define the combinatorial basis for $\Fix(x,y)$. We also present the transition matrices between the geometric basis and the combinatorial basis for $\Fix(x,y)$.

\begin{theorem}
\label{2basis}
For $x,y\in X$ such that $1<\partial(x,y)<k$, the vectors in {\rm{(\ref{basis2})}} form a basis for $\Fix(x,y)$.
\end{theorem}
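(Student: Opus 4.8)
The plan is to reduce the statement to a linear-independence check and then to an explicit determinant computation. By Theorem \ref{1basis} applied in the range $1\leq \partial(x,y)<k$, the subspace $\Fix(x,y)$ is $4$-dimensional, with geometric basis $\widehat{x},\widehat{y},\widehat{x\cap y},\widehat{x+y}$. By Lemma \ref{fixcontain2} the vectors $B_{xy}$ and $C_{xy}$ lie in $\Fix(x,y)$, and trivially $\widehat{x},\widehat{y}\in\Fix(x,y)$. Hence it suffices to show that the four vectors in {\rm(\ref{basis2})} are linearly independent; since there are four of them and $\dim\Fix(x,y)=4$, linear independence forces them to be a basis.

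Next I would invoke Lemma \ref{bc}, which expresses $B_{xy}$ and $C_{xy}$ as explicit linear combinations of the geometric basis. This yields the transition matrix $T$ whose columns are the coordinate vectors of $\widehat{x},\widehat{y},B_{xy},C_{xy}$ relative to the geometric basis $(\widehat{x},\widehat{y},\widehat{x\cap y},\widehat{x+y})$. Since $\widehat{x}$ and $\widehat{y}$ contribute the first two standard basis vectors, $T$ is block upper triangular with a $2\times 2$ identity block in the top left and the block
\begin{equation*}
    \begin{pmatrix}
        -q^{2i}[n-k-i] & q^{i}[i-1]\\[2pt]
        -q^{i}[k-i] & [i-1]
    \end{pmatrix}
\end{equation*}
in the bottom right, where $i=\partial(x,y)$. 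Therefore $\det T$ equals the determinant of this $2\times 2$ block, which simplifies to $q^{2i}[i-1]\bigl([k-i]-[n-k-i]\bigr)$.

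It then remains to verify that $\det T\neq 0$. Here I would use the identity $[k-i]-[n-k-i]=-q^{k-i}[n-2k]$, which is immediate from the definition {\rm(\ref{notation})} together with $q^{n-k-i}=q^{k-i}q^{n-2k}$; this gives $\det T=-q^{k+i}[i-1][n-2k]$. Since $1<\partial(x,y)$ we have $i-1\geq 1$, so $[i-1]\geq 1$; since $n>2k$ we have $[n-2k]\geq 1$; and $q^{k+i}\neq 0$. Hence $\det T\neq 0$, so $T$ is invertible and the vectors in {\rm(\ref{basis2})} form a basis for $\Fix(x,y)$.

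As for the main obstacle: there is essentially no conceptual difficulty, since the substantive work was already done in Theorem \ref{1basis} (the dimension of $\Fix(x,y)$) and in Lemma \ref{bc} (the change-of-basis coefficients). The only points to watch are the bookkeeping in the $2\times 2$ determinant and the careful use of the hypotheses $\partial(x,y)>1$ and $n>2k$, which are exactly what guarantee $[i-1]$ and $[n-2k]$ are nonzero. An alternative would be to compute the Gram determinant of {\rm(\ref{basis2})} directly from Theorem \ref{innprodmat22} and check it is nonzero, but that is computationally heavier and gives no extra insight.
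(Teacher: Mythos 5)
Your proposal is correct and follows essentially the same route as the paper: both reduce the claim to the invertibility of the coefficient matrix obtained from Lemma \ref{bc}, whose determinant is $-q^{k+i}[i-1][n-2k]$, nonzero because $1<i<k$ and $n>2k$. Your explicit block-triangular reduction and the identity $[k-i]-[n-k-i]=-q^{k-i}[n-2k]$ simply fill in the computation the paper labels as routine.
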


\begin{proof}
    We first find the matrix of coefficients when we write the vectors in (\ref{basis2}) as linear combinations in the geometric basis for $\Fix(x,y)$. From Lemma \ref{bc}, we routinely obtain the following matrix of coefficients: 
    \begin{equation}
        \label{tran2}
        \begin{pmatrix}
        1 & 0 & q^{2i}[k-i][n-k-i] & q[i-1]^2\\
        0 & 1 & 0 & q^{i-1}\\
        0 & 0 & -q^{2i}[n-k-i] & q^i[i-1]\\
        0 & 0 &-q^i[k-i]&[i-1]
        \end{pmatrix},
    \end{equation}
    where $i=\partial(x,y)$.
    
    It suffices to show that the determinant of this matrix is nonzero.

    The determinant is equal to $-q^{k+i}[i-1][n-2k]$. We have $[i-1]\neq 0$ since $1<i<k$. Also, $[n-2k]\neq 0$ since $n>2k$. Hence the determinant of the matrix (\ref{tran2}) is nonzero. The result follows.
\end{proof}

\begin{definition}
    Let $x,y\in X$ satisfy $1<\partial(x,y)<k$. By the \emph{combinatorial basis for $\Fix(x,y)$}, we mean the basis formed by the vectors in (\ref{basis2}).
\end{definition}

Next we give the transition matrices between the geometric basis and the combinatorial basis for $\Fix(x,y)$. Throughout this paper we will use the convention described in \cite[p.~352]{Ter1} for transition matrices.

\begin{theorem}
\label{trans2}
    Let $x,y\in X$ satisfy $1<\partial(x,y)<k$. Write $i=\partial(x,y)$. The transition matrix from the geometric basis to the combinatorial basis for $\Fix(x,y)$ is equal to the matrix {\rm{(\ref{tran2})}}.

    The transition matrix from the combinatorial basis to the geometric basis for $\Fix(x,y)$ is equal to
    \begin{equation}
        \label{inversetran2}
        \begin{pmatrix}
        1 & 0 & \frac{[k-i][n-k-1]}{q^{k-1}[n-2k]} & \frac{-[k-1][n-k-i]}{q^{k-i-1}[n-2k]}\\
        0 & 1 & \frac{[k-i]}{q^{k-i+1}[i-1][n-2k]} & \frac{-[n-k-i]}{q^{k-2i+1}[i-1][n-2k]}\\
        0 & 0 & \frac{-1}{q^{k+i}[n-2k]} & \frac{1}{q^k[n-2k]}\\
        0 & 0 &\frac{-[k-i]}{q^k[i-1][n-2k]}&\frac{[n-k-i]}{q^{k-i}[i-1][n-2k]}\\
        \end{pmatrix}.
    \end{equation}
\end{theorem}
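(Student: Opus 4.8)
The plan is to derive both assertions from Lemma \ref{bc} by elementary linear algebra together with a couple of $q$-binomial identities; no new geometric input is needed.

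First I would settle the transition matrix from the geometric basis $\widehat{x},\widehat{y},\widehat{x\cap y},\widehat{x+y}$ to the combinatorial basis $\widehat{x},\widehat{y},B_{xy},C_{xy}$. Under the convention for transition matrices fixed above, the $j$-th column of this matrix is the coordinate vector, with respect to the geometric basis, of the $j$-th combinatorial basis vector. The first two combinatorial basis vectors are $\widehat{x}$ and $\widehat{y}$, contributing the first two standard basis vectors as columns; the last two columns are read off from the expansions of $B_{xy}$ and $C_{xy}$ in Lemma \ref{bc}. This is exactly the matrix (\ref{tran2})---the same matrix of coefficients already assembled at the start of the proof of Theorem \ref{2basis}.

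For the second assertion, note that, again by the chosen convention, the transition matrix from the combinatorial basis back to the geometric basis is the inverse of (\ref{tran2}); so it suffices to check that (\ref{inversetran2}) is that inverse. I would use the block form $\left(\begin{smallmatrix} I & P\\ 0 & Q\end{smallmatrix}\right)$ of (\ref{tran2}), where $I$ is the $2\times 2$ identity matrix, $P=\left(\begin{smallmatrix} q^{2i}[k-i][n-k-i] & q[i-1]^2\\ 0 & q^{i-1}\end{smallmatrix}\right)$, and $Q=\left(\begin{smallmatrix} -q^{2i}[n-k-i] & q^i[i-1]\\ -q^i[k-i] & [i-1]\end{smallmatrix}\right)$; then the inverse is $\left(\begin{smallmatrix} I & -PQ^{-1}\\ 0 & Q^{-1}\end{smallmatrix}\right)$. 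The determinant of $Q$ is $q^{2i}[i-1]\bigl([k-i]-[n-k-i]\bigr)=-q^{k+i}[i-1][n-2k]$, which is the nonzero value already recorded in the proof of Theorem \ref{2basis}; so $Q^{-1}$ comes from the adjugate, and the lower-right $2\times 2$ block of (\ref{inversetran2}) follows at once. The upper-right block of the inverse is $-PQ^{-1}$, and matching it with the upper-right block of (\ref{inversetran2}) is the last thing to do.

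The main obstacle is the routine but slightly delicate simplification of the entries of $-PQ^{-1}$ into the compact rational expressions in (\ref{inversetran2}). After expansion, each of these entries is a sum of two terms of the form monomial times $[\,\cdot\,]$, which collapses through an identity of the shape $q^a[b]+q^c[d]=q^e[f]$; concretely one needs $[k-i]-[n-k-i]=-q^{k-i}[n-2k]$, $q^i[n-k-i]+q[i-1]=q[n-k-1]$, and $q^i[k-i]+q[i-1]=q[k-1]$, all immediate from $[m]=(q^m-1)/(q-1)$. With these identities in hand, each entry of (\ref{inversetran2}) is verified by a direct computation. An equivalent (and arguably cleaner) route is to multiply (\ref{tran2}) by (\ref{inversetran2}) and check that the product is the identity, which again reduces to exactly these $q$-identities.
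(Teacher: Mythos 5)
Your proposal is correct and follows the same route as the paper: the first assertion is read off from the matrix of coefficients (\ref{tran2}) built from Lemma \ref{bc}, and the second is obtained by inverting (\ref{tran2}), which the paper leaves as a routine computation. Your block-triangular inversion and the three $q$-identities all check out, so you have simply supplied the details the paper omits.
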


\begin{proof}
The first assertion is immediate from the construction of the matrix (\ref{tran2}). For the second assertion, take the inverse of the matrix (\ref{tran2}) to obtain the matrix (\ref{inversetran2}).
\end{proof}

\begin{theorem}
\label{maintheorem}
    For $x,y\in X$ such that $1<\partial(x,y)<k$, we have
    \begin{equation*}
        \begin{aligned}
            \widehat{x\cap y}&=\frac{[k-i][n-k-1]}{q^{k-1}[n-2k]}\widehat{x}+\frac{[k-i]}{q^{k-i+1}[i-1][n-2k]}\widehat{y}+\frac{-1}{q^{k+i}[n-2k]}B_{xy}+\frac{-[k-i]}{q^k[i-1][n-2k]}C_{xy},\\
            \widehat{x+y}&=\frac{-[k-1][n-k-i]}{q^{k-i-1}[n-2k]}\widehat{x}+\frac{-[n-k-i]}{q^{k-2i+1}[i-1][n-2k]}\widehat{y}+\frac{1}{q^k[n-2k]}B_{xy}+\frac{[n-k-i]}{q^{k-i}[i-1][n-2k]}C_{xy},
        \end{aligned}
    \end{equation*}
    where $i=\partial(x,y)$.
\end{theorem}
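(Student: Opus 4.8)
The plan is to read off Theorem \ref{maintheorem} directly from Theorem \ref{trans2}, since the two statements are essentially a restatement of each other under the transition-matrix convention. First I would recall from Theorem \ref{1basis} that for $1\le\partial(x,y)<k$ the geometric basis for $\Fix(x,y)$ is $\widehat{x},\widehat{y},\widehat{x\cap y},\widehat{x+y}$, listed in that order, and from Theorem \ref{2basis} and the definition immediately following it that the combinatorial basis is $\widehat{x},\widehat{y},B_{xy},C_{xy}$, also in that order. By Theorem \ref{trans2}, the transition matrix from the combinatorial basis to the geometric basis for $\Fix(x,y)$ is the matrix (\ref{inversetran2}).

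Next I would spell out what the transition-matrix convention of \cite[p.~352]{Ter1} says in this situation: if $P$ is the transition matrix from an ordered basis $(f_1,f_2,f_3,f_4)$ to an ordered basis $(e_1,e_2,e_3,e_4)$, then each $e_j$ is expressed in terms of the $f_i$ using the $j$-th column of $P$, i.e. $e_j=\sum_{i=1}^4 P_{ij}f_i$. Applying this with $(f_1,\dots,f_4)=(\widehat{x},\widehat{y},B_{xy},C_{xy})$, $(e_1,\dots,e_4)=(\widehat{x},\widehat{y},\widehat{x\cap y},\widehat{x+y})$, and $P$ equal to the matrix (\ref{inversetran2}): the third column of (\ref{inversetran2}) gives the expansion of $e_3=\widehat{x\cap y}$, and the fourth column gives the expansion of $e_4=\widehat{x+y}$. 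Reading the third column from top to bottom yields
\begin{equation*}
\widehat{x\cap y}=\frac{[k-i][n-k-1]}{q^{k-1}[n-2k]}\widehat{x}+\frac{[k-i]}{q^{k-i+1}[i-1][n-2k]}\widehat{y}+\frac{-1}{q^{k+i}[n-2k]}B_{xy}+\frac{-[k-i]}{q^k[i-1][n-2k]}C_{xy},
\end{equation*}
and reading the fourth column yields the stated formula for $\widehat{x+y}$. Both expansions make sense because $\Fix(x,y)$ is $4$-dimensional with the two bases in question (Theorems \ref{1basis}, \ref{2basis}), and the denominators are nonzero since $1<i<k$ forces $[i-1]\ne0$ and $n>2k$ forces $[n-2k]\ne0$ — exactly the conditions already verified in the proof of Theorem \ref{2basis}.

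I do not expect any genuine obstacle here: the content was already done in establishing Theorem \ref{trans2} (and behind it, Lemma \ref{bc} and Lemma \ref{inverse}), so the only thing to be careful about is bookkeeping — making sure the rows of column three and column four of (\ref{inversetran2}) are matched to $\widehat{x},\widehat{y},B_{xy},C_{xy}$ in the right order, and that the convention is being applied in the correct direction (columns of the transition matrix, not rows). The mildest potential pitfall is a transpose error in interpreting \cite[p.~352]{Ter1}; I would guard against it by noting that the first two columns of (\ref{inversetran2}) are $(1,0,0,0)^{t}$ and $(0,1,0,0)^{t}$, which correctly reproduce $\widehat{x}=\widehat{x}$ and $\widehat{y}=\widehat{y}$ under the column reading, confirming the orientation. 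Hence the proof is simply: apply Theorem \ref{trans2} and read off columns three and four of (\ref{inversetran2}).
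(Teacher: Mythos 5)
Your proposal is correct and is exactly the paper's argument: the paper's proof of Theorem \ref{maintheorem} is simply ``Routine from the matrix (\ref{inversetran2})'', i.e.\ reading columns three and four of the transition matrix from the combinatorial basis to the geometric basis, which is what you do. Your sanity check via the first two columns confirming $\widehat{x}=\widehat{x}$, $\widehat{y}=\widehat{y}$ is a sensible way to verify the orientation of the convention, but otherwise nothing is missing.
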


\begin{proof}
    Routine from the matrix (\ref{inversetran2}).
\end{proof}

\section{The subspace $\overline{\Fix}(x,y)$; swapping $x$ and $y$}
Pick $x,y\in X$ such that $1<\partial(x,y)<k$. In this section we consider an element $\sigma\in GL(V)$ that swaps $x$ and $y$. We show that the action of $\sigma$ on $\Fix(x,y)$ has eigenvalues $-1$ and $1$. We describe the corresponding eigenspaces. The eigenspace that corresponds to the eigenvalue $1$ will be denoted by $\overline{\Fix}(x,y)$. We construct a basis for $\overline{\Fix}(x,y)$, called the geometric basis; this basis is derived from the geometric basis for $\Fix(x,y)$. We construct another basis for $\overline{\Fix}(x,y)$, called the combinatorial basis; this basis is derived from the combinatorial basis for $\Fix(x,y)$. We give the transition matrices between the geometric basis and the combinatorial basis for $\overline{\Fix}(x,y)$.
\begin{lemma}
    \label{exist}
    For $x,y\in X$ such that $1<\partial(x,y)<k$, there exists $\sigma\in GL(V)$ that swaps $x$ and $y$.
\end{lemma}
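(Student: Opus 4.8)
Write $i=\partial(x,y)$, so that by Lemma \ref{introlem0} we have $\dim(x\cap y)=k-i$, and by Lemma \ref{sumdim} we have $\dim(x+y)=k+i$. The plan is to build $\sigma$ explicitly from a compatible choice of bases, exactly in the style of the arguments in Lemmas \ref{staborbit} and \ref{main1}. First I would let $\mathcal{R}\subseteq V$ be a basis for $x\cap y$, extend it to a basis $\mathcal{R}\cup\mathcal{S}$ for $x$, and extend it to a basis $\mathcal{R}\cup\mathcal{T}$ for $y$. By Lemma \ref{linin}, $\mathcal{R}\cup\mathcal{S}\cup\mathcal{T}$ is then a basis for $x+y$, and I would extend this to a basis $\mathcal{R}\cup\mathcal{S}\cup\mathcal{T}\cup\mathcal{Q}$ for $V$.

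The key numerical observation is that $|\mathcal{S}|=\dim x-\dim(x\cap y)=i$ and likewise $|\mathcal{T}|=\dim y-\dim(x\cap y)=i$, so $|\mathcal{S}|=|\mathcal{T}|$ and we may fix a bijection $\mathcal{S}\to\mathcal{T}$. By linear algebra there is a (unique) $\sigma\in GL(V)$ that acts as the identity on each vector of $\mathcal{R}$ and of $\mathcal{Q}$, sends each vector of $\mathcal{S}$ to its image under this bijection, and sends each vector of $\mathcal{T}$ back to the corresponding vector of $\mathcal{S}$; that is, $\sigma$ swaps $\mathcal{S}$ and $\mathcal{T}$ and is the identity elsewhere on the chosen basis of $V$. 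This $\sigma$ is invertible since it permutes a basis of $V$.

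Finally I would check that $\sigma$ swaps $x$ and $y$: since $x=\mathrm{Span}(\mathcal{R}\cup\mathcal{S})$, we get $\sigma(x)=\mathrm{Span}(\mathcal{R}\cup\mathcal{T})=y$, and since $y=\mathrm{Span}(\mathcal{R}\cup\mathcal{T})$, we get $\sigma(y)=\mathrm{Span}(\mathcal{R}\cup\mathcal{S})=x$. Hence $\sigma\in GL(V)$ swaps $x$ and $y$, as required.

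There is no real obstacle here; the argument is routine linear algebra. The only point that requires the hypothesis $1<\partial(x,y)<k$ (beyond $x\neq y$) is implicitly that $i\geq 1$, so that $\mathcal{S}$ and $\mathcal{T}$ are nonempty and the construction is nontrivial — but even for $i=0$ the statement would hold trivially with $\sigma$ the identity. The genuine content of the hypothesis is reserved for the later results that build on this lemma.
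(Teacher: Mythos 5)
Your proof is correct, but it takes a different route from the paper. The paper disposes of this lemma in one line by citing the fact (recorded at the end of Section \ref{euclideanrepresentation}, from \cite[Theorem~9.3.3]{BCN}) that $GL(V)$ acts distance-transitively on $\Gamma$: applying distance-transitivity to the pairs $(x,y)$ and $(y,x)$, which are at the same distance, immediately yields a $\sigma\in GL(V)$ swapping $x$ and $y$. You instead give an explicit construction: choose a basis $\mathcal{R}$ of $x\cap y$, extend to bases $\mathcal{R}\cup\mathcal{S}$ of $x$ and $\mathcal{R}\cup\mathcal{T}$ of $y$, invoke Lemma \ref{linin} to get a basis of $x+y$, complete to a basis of $V$, and let $\sigma$ swap $\mathcal{S}$ with $\mathcal{T}$ (both of size $i$) while fixing the rest of the basis pointwise. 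This is exactly the style of argument the paper uses elsewhere (Lemmas \ref{staborbit}, \ref{main1}, \ref{orbitcxy}), and it is sound: $\sigma$ permutes a basis of $V$, hence lies in $GL(V)$, and visibly sends $x\mapsto y$ and $y\mapsto x$. Your version is more self-contained (it does not rely on the external citation for distance-transitivity) and, as you observe, actually proves the statement for any distinct $x,y\in X$, the hypothesis $1<\partial(x,y)<k$ playing no essential role; the paper's version buys brevity at the cost of leaning on a quoted fact.
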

\begin{proof}
    Immediate from the fact that the graph $\Gamma$ is distance-transitive.
\end{proof}

\begin{lemma}
\label{notelem}
    Let $x,y\in X$ satisfy $1<\partial(x,y)<k$. Pick $\sigma \in GL(V)$ that swaps $x$ and $y$. The following {\rm{(i)--(iii)}} hold:
    \begin{enumerate}[label={\rm{(\roman*)}}]
        \item $\Fix(x,y)$ is invariant under $\sigma$;

        \item on $\Fix(x,y)$, we have $\sigma^2={\rm{id}}$ and $\sigma\neq \pm {\rm{id}}$, where ${\rm{id}}$ is the identity element of $GL(V)$;

        \item the restriction of $\sigma$ to $\Fix(x,y)$ is diagonalizable with eigenvalues $-1$ and $1$.
    \end{enumerate}
\end{lemma}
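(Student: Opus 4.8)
The plan is to establish (i)--(iii) in sequence. The only inputs needed are that $\sigma$ induces the transposition of the set $\{x,y\}$, Lemma~\ref{sigmau} (which says $\sigma(\widehat{u})=\widehat{\sigma(u)}$ for all $u\in P$), and Theorem~\ref{1basis}, which already exhibits four linearly independent vectors in $\Fix(x,y)$.

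For (i), the key observation is that $\sigma$ normalizes $\Stab(x,y)$. Since $\sigma$ swaps $x$ and $y$, so does $\sigma^{-1}$; hence for any $\tau\in\Stab(x,y)$ the conjugate $\sigma^{-1}\tau\sigma$ sends $x\mapsto y\mapsto y\mapsto x$ and likewise fixes $y$, so $\sigma^{-1}\tau\sigma\in\Stab(x,y)$. Now for $\mu\in\Fix(x,y)$ and $\tau\in\Stab(x,y)$ I would write $\tau\sigma(\mu)=\sigma\bigl(\sigma^{-1}\tau\sigma\bigr)(\mu)=\sigma(\mu)$, using that $\sigma^{-1}\tau\sigma\in\Stab(x,y)$ fixes $\mu$. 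Thus $\sigma(\mu)\in\Fix(x,y)$, proving (i). For the first half of (ii), note that $\sigma^2$ fixes both $x$ and $y$, so $\sigma^2\in\Stab(x,y)$ and therefore acts as the identity on $\Fix(x,y)$. For the second half, I would invoke the geometric basis $\widehat{x},\widehat{y},\widehat{x\cap y},\widehat{x+y}$ of Theorem~\ref{1basis}: since these vectors are linearly independent, $\widehat{x}\neq\widehat{y}$ and $\widehat{x\cap y}\neq 0$. By Lemma~\ref{sigmau}, $\sigma(\widehat{x})=\widehat{\sigma(x)}=\widehat{y}\neq\widehat{x}$, so $\sigma\neq\mathrm{id}$ on $\Fix(x,y)$; and $\sigma(\widehat{x\cap y})=\widehat{\sigma(x)\cap\sigma(y)}=\widehat{x\cap y}\neq 0$, so $\sigma\neq-\mathrm{id}$ on $\Fix(x,y)$.

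Part (iii) is then immediate linear algebra: by (ii) the operator $\sigma|_{\Fix(x,y)}$ is annihilated by the squarefree polynomial $t^2-1=(t-1)(t+1)$, hence it is diagonalizable with all eigenvalues in $\{1,-1\}$; and since it is neither $\mathrm{id}$ nor $-\mathrm{id}$, each of $1$ and $-1$ actually occurs as an eigenvalue.

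There is no serious obstacle in this argument. The only point that requires care is showing that $\sigma$ genuinely moves some vector of $\Fix(x,y)$ and genuinely fixes a nonzero one; this is handled cleanly by the linear independence of the geometric basis (Theorem~\ref{1basis}), which is where the hypotheses $1<\partial(x,y)<k$ (ensuring $x\cap y\neq 0$) and $n>2k$ enter.
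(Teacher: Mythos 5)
Your proof is correct and follows essentially the same route as the paper: the paper treats (i) and the identity $\sigma^2=\mathrm{id}$ on $\Fix(x,y)$ as immediate from the construction (which your conjugation and $\sigma^2\in\Stab(x,y)$ arguments merely spell out), witnesses $\sigma\neq\pm\mathrm{id}$ by exhibiting vectors of $\Fix(x,y)$ that are moved and fixed respectively, and deduces (iii) from (ii). The only cosmetic difference is the choice of witnesses — the paper uses $\widehat{x}-\widehat{y}$ and $\widehat{x}+\widehat{y}$ where you use $\widehat{x}$ and $\widehat{x\cap y}$ — both justified by the linear independence of the geometric basis from Theorem \ref{1basis}.
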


\begin{proof}
(i) Immediate from the construction of $\sigma$.

(ii) The first assertion is immediate from the construction of $\sigma$. We now prove the second assertion. Note that $\widehat{x}-\widehat{y}$ is a nonzero vector in $\Fix(x,y)$ such that 
\begin{equation*}
    \sigma\bigl(\widehat{x}-\widehat{y}\bigr)=\widehat{y}-\widehat{x}.
\end{equation*}
Hence, $\sigma\neq {\rm{id}}$ on $\Fix(x,y)$. 
Note that $\widehat{x}+\widehat{y}$ is a nonzero vector in $\Fix(x,y)$ such that
\begin{equation*}
    \sigma\bigl(\widehat{x}+\widehat{y}\bigr)=\widehat{x}+\widehat{y}.
\end{equation*}
Hence, $\sigma\neq -{\rm{id}}$ on $\Fix(x,y)$. 
The result follows.

(iii) Immediate from (ii) of this lemma.
\end{proof}

We now find bases for the eigenspaces corresponding to the eigenvalues listed in Lemma \ref{notelem}(iii).

\begin{lemma}
\label{sigmalem}
Let $x,y\in X$ satisfy $1<\partial(x,y)<k$. Pick $\sigma\in GL(V)$ that swaps $x$ and $y$. For the action of $\sigma$ on $\Fix(x,y)$ the following hold:
\begin{enumerate}[label={\rm{(\roman*)}}]
    \item the eigenspace with eigenvalue $-1$ has a basis $\widehat{x}-\widehat{y}$;
    \item the following vectors form a basis for the eigenspace with eigenvalue $1$.
\end{enumerate} 
\begin{equation}
\label{barbasis1}
    \widehat{x}+\widehat{y},\qquad \qquad \widehat{x\cap y},\qquad \qquad \widehat{x+y}
\end{equation}
\end{lemma}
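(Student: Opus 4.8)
The plan is to exploit the general principle that for an order-$2$ diagonalizable operator, the $\pm 1$ eigenspaces are exactly the images of the symmetrizer $\frac{1}{2}(\mathrm{id}+\sigma)$ and antisymmetrizer $\frac{1}{2}(\mathrm{id}-\sigma)$. So first I would recall from Theorem \ref{1basis} that $\{\widehat{x},\widehat{y},\widehat{x\cap y},\widehat{x+y}\}$ is the geometric basis for $\Fix(x,y)$, and then work out how $\sigma$ permutes these four basis vectors. Using Lemma \ref{sigmau} we have $\sigma(\widehat{u})=\widehat{\sigma(u)}$ for all $u\in P$; since $\sigma$ swaps $x$ and $y$, it sends $\widehat{x}\leftrightarrow\widehat{y}$, and since $\sigma(x\cap y)=\sigma(x)\cap\sigma(y)=y\cap x=x\cap y$ and likewise $\sigma(x+y)=x+y$, it fixes $\widehat{x\cap y}$ and $\widehat{x+y}$. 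Thus in the geometric basis $\sigma$ acts by the permutation matrix swapping the first two coordinates and fixing the last two.

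From this explicit description the eigenspace decomposition is immediate. The vector $\widehat{x}-\widehat{y}$ is negated by $\sigma$, giving part (i); and one checks directly that $\widehat{x}+\widehat{y}$, $\widehat{x\cap y}$, $\widehat{x+y}$ are each fixed by $\sigma$, so they all lie in the eigenspace with eigenvalue $1$. To see that these three vectors actually form a basis for that eigenspace (rather than just spanning a subspace of it), I would note that they are linearly independent — they are obtained from the geometric basis of $\Fix(x,y)$ by the invertible change of coordinates replacing $\widehat{x},\widehat{y}$ with $\widehat{x}-\widehat{y},\widehat{x}+\widehat{y}$ and keeping $\widehat{x\cap y},\widehat{x+y}$ — and that by Lemma \ref{notelem}(iii) the eigenvalue-$1$ eigenspace has dimension $\dim\Fix(x,y)-1=4-1=3$ (the eigenvalue-$(-1)$ space being $1$-dimensional, spanned by $\widehat{x}-\widehat{y}$, since $\sigma$ is the transposition of a two-element basis subset). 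A $3$-dimensional subspace containing $3$ linearly independent vectors equals their span.

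There is essentially no hard part here; the only thing requiring a moment's care is the dimension count in the last step, i.e.\ confirming that the eigenvalue-$(-1)$ eigenspace is exactly one-dimensional so that the eigenvalue-$1$ eigenspace is three-dimensional. This follows because in the chosen basis $\sigma$ is conjugate to $\mathrm{diag}(-1,1,1,1)$: it is the permutation matrix of a single transposition, whose eigenvalues are $-1$ (once) and $1$ (three times). Alternatively one can argue directly that any eigenvector with eigenvalue $-1$, written as $a\widehat{x}+b\widehat{y}+c\,\widehat{x\cap y}+d\,\widehat{x+y}$ and compared with its image $b\widehat{x}+a\widehat{y}+c\,\widehat{x\cap y}+d\,\widehat{x+y}$ under $\sigma$, must satisfy $a=-b$, $c=d=0$ by linear independence of the geometric basis, hence is a scalar multiple of $\widehat{x}-\widehat{y}$. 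Either way the proof is short and the computation is routine.
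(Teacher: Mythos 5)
Your proposal is correct and follows essentially the same route as the paper: both arguments observe that $\sigma$ swaps $\widehat{x},\widehat{y}$ and fixes $\widehat{x\cap y},\widehat{x+y}$ (so that $\widehat{x}-\widehat{y}$ is a $(-1)$-eigenvector and the three vectors in (\ref{barbasis1}) are $1$-eigenvectors), and then conclude by noting that these four eigenvectors are obtained from the geometric basis of $\Fix(x,y)$ by an invertible change of basis, which pins down both eigenspace dimensions. Your version merely spells out the permutation-matrix computation and the dimension count that the paper leaves implicit.
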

\begin{proof}
For the action of $\sigma$ on $\Fix(x,y)$, the eigenspace with eigenvalue $-1$ contains the vector $\widehat{x}-\widehat{y}$, and the eigenspace with eigenvalue $1$ contains the three vectors in (\ref{barbasis1}).

Recall the geometric basis for $\Fix(x,y)$ given in Theorem \ref{1basis}. Adjusting this basis, we find that the following vectors form a basis for $\Fix(x,y)$:
\begin{equation*}
    \widehat{x}-\widehat{y},\qquad \qquad \widehat{x}+\widehat{y}, \qquad \qquad \widehat{x\cap y}, \qquad \qquad \widehat{x+y}.
\end{equation*}

The result follows.
\end{proof}

For $x,y\in X$ such that $1<\partial(x,y)<k$, the two eigenspaces described in Lemma \ref{sigmalem} are independent of the choice of $\sigma\in GL(V)$ that swaps $x$ and $y$.

\begin{definition}
\label{defbarfix}
For $x,y\in X$ such that $1<\partial(x,y)<k$, define $\overline{\Fix}(x,y)$ to be the eigenspace for the eigenvalue $1$ described in Lemma \ref{sigmalem}(ii). 
\end{definition}
Note that $\overline{\Fix}(x,y)$ has a basis (\ref{barbasis1}). 

\begin{definition}
    Let $x,y\in X$ satisfy $1<\partial(x,y)<k$. By the \emph{geometric basis for $\overline{\Fix}(x,y)$}, we mean the basis (\ref{barbasis1}).
\end{definition}

Pick $x,y\in X$ such that $1<\partial(x,y)<k$. We just introduced the geometric basis for $\overline{\Fix}(x,y)$. We now use the combinatorial basis for $\Fix(x,y)$ to find another basis for $\overline{\Fix}(x,y)$. To find this basis, we recall the balanced set condition. 

\begin{lemma}{\rm{\cite[Theorem~3.3]{Ter2}}}
\label{balancebc}
For $x,y\in X$ such that $1<\partial(x,y)<k$, 
\begin{equation}
    \nonumber
    B_{xy}-B_{yx}=\zeta\bigl(\widehat{x}-\widehat{y}\bigr),\qquad \qquad C_{xy}-C_{yx}=\xi\bigl(\widehat{x}-\widehat{y}\bigr), 
\end{equation}
where 
\begin{equation}
\begin{aligned}
    \nonumber
    \zeta&=q^{2i}[k-i][n-k-i],\qquad \qquad \xi=q[i][i-2],\qquad \qquad i=\partial(x,y).
\end{aligned}
\end{equation}
\end{lemma}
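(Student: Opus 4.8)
The plan is to deduce both identities directly from the closed-form expansions of $B_{xy}$ and $C_{xy}$ in the geometric basis for $\Fix(x,y)$ that were established in Lemma \ref{bc}. First I would record the three symmetries that make the argument work: $\partial(y,x)=\partial(x,y)=i$; the subspaces satisfy $y\cap x=x\cap y$ and $y+x=x+y$; and $1<\partial(y,x)<k$, so Lemma \ref{bc} applies verbatim to the ordered pair $(y,x)$. Applying it to $(y,x)$ gives
\[
B_{yx}=q^{2i}[k-i][n-k-i]\,\widehat{y}-q^{2i}[n-k-i]\,\widehat{x\cap y}-q^{i}[k-i]\,\widehat{x+y},
\]
\[
C_{yx}=q[i-1]^{2}\,\widehat{y}+q^{i-1}\,\widehat{x}+q^{i}[i-1]\,\widehat{x\cap y}+[i-1]\,\widehat{x+y}.
\]

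Next I would subtract these from the corresponding expansions of $B_{xy}$ and $C_{xy}$ in Lemma \ref{bc}. In both differences the $\widehat{x\cap y}$- and $\widehat{x+y}$-terms cancel. The $B$ difference then collapses to
\[
B_{xy}-B_{yx}=q^{2i}[k-i][n-k-i]\bigl(\widehat{x}-\widehat{y}\bigr),
\]
which is the first identity with $\zeta=q^{2i}[k-i][n-k-i]$. For the $C$ difference both the $\widehat{x}$- and the $\widehat{y}$-coefficient contribute, and collecting terms yields
\[
C_{xy}-C_{yx}=\bigl(q[i-1]^{2}-q^{i-1}\bigr)\bigl(\widehat{x}-\widehat{y}\bigr).
\]
Finally I would verify the elementary $q$-identity $q[i-1]^{2}-q^{i-1}=q[i][i-2]$: after clearing denominators by $(q-1)^{2}$, both sides expand to $q^{2i-1}-q^{i+1}-q^{i-1}+q$, so $\xi=q[i][i-2]$.

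All of the substance here is already packaged in Lemma \ref{bc}, and the remaining work is a one-line $q$-binomial identity, so I do not expect any genuine obstacle. (One could instead invoke the balanced-set condition for $Q$-polynomial distance-regular graphs from the cited reference, but the route through Lemma \ref{bc} is the most direct given the machinery developed above.)
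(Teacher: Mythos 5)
Your proposal is correct, but it takes a different route from the paper: the paper gives no internal proof of Lemma \ref{balancebc} at all, instead citing it as an instance of the balanced set condition from \cite[Theorem~3.3]{Ter2}. Your derivation stays entirely inside the paper's own machinery: Lemma \ref{bc} is proved in Section \ref{innerproducts} without reference to Lemma \ref{balancebc}, so applying it to the ordered pair $(y,x)$ (legitimate, since $\partial(y,x)=\partial(x,y)$, $y\cap x=x\cap y$, $y+x=x+y$) and subtracting involves no circularity. The cancellation of the $\widehat{x\cap y}$ and $\widehat{x+y}$ terms immediately gives $\zeta=q^{2i}[k-i][n-k-i]$, and the remaining identity $q[i-1]^2-q^{i-1}=q[i][i-2]$ checks out: both sides times $(q-1)^2$ equal $q^{2i-1}-q^{i+1}-q^{i-1}+q$. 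What the citation buys the paper is generality (the balanced set condition holds for any $Q$-polynomial distance-regular graph and any representation associated with $\theta_1$, independent of the Grassmann-specific computations); what your route buys is self-containment and an independent consistency check on the coefficients in Lemma \ref{bc}. Both are sound.
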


\begin{definition}
\label{barbcdef}
For $x,y\in X$ such that $1<\partial(x,y)<k$, define 
\begin{equation}
    \nonumber
    \overline{B_{xy}}=B_{xy}-\zeta\widehat{x},\qquad \qquad  \overline{C_{xy}}=C_{xy}-\xi\widehat{x},
\end{equation}
where $\zeta$, $\xi$ are from Lemma \ref{balancebc}.
\end{definition}

By Lemma \ref{balancebc}, 
\begin{equation}
\label{barbca}
\overline{B_{xy}}=\overline{B_{yx}},\qquad \qquad \overline{C_{xy}}=\overline{C_{yx}}.
\end{equation}

\begin{lemma}
\label{barcontain}
For $x,y\in X$ such that $1<\partial(x,y)<k$, the subspace $\overline{\Fix}(x,y)$ contains the vectors $\overline{B_{xy}}$,  $\overline{C_{xy}}$.
\end{lemma}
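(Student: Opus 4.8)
The plan is to verify two things: that $\overline{B_{xy}}$ and $\overline{C_{xy}}$ lie in $\Fix(x,y)$, and that they are fixed by an element $\sigma\in GL(V)$ that swaps $x$ and $y$. By Definition \ref{defbarfix}, these two facts together place both vectors in $\overline{\Fix}(x,y)$. For the first point, Lemma \ref{fixcontain2} gives $B_{xy},C_{xy}\in\Fix(x,y)$, and $\widehat{x}\in\Fix(x,y)$ since it is one of the geometric basis vectors of Theorem \ref{1basis}; as $\Fix(x,y)$ is a subspace, $\overline{B_{xy}}=B_{xy}-\zeta\widehat{x}$ and $\overline{C_{xy}}=C_{xy}-\xi\widehat{x}$ lie in it.

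For the second point, pick $\sigma\in GL(V)$ swapping $x$ and $y$ (Lemma \ref{exist}), and recall from Lemma \ref{sigmau} that $\sigma$ acts on $E$ with $\sigma(\widehat{z})=\widehat{\sigma(z)}$ for all $z\in P$; in particular $\sigma(\widehat{x})=\widehat{y}$. I would first show that $\sigma$ maps $\mathcal{B}_{xy}$ onto $\mathcal{B}_{yx}$: if $z\in\mathcal{B}_{xy}$ then $z\in\Gamma(x)$ and $\partial(y,z)=\partial(x,y)+1$, so by Lemma \ref{presdist} we get $\sigma(z)\in\Gamma(y)$ and $\partial(x,\sigma(z))=\partial(y,x)+1$, i.e. $\sigma(z)\in\mathcal{B}_{yx}$; since $|\mathcal{B}_{xy}|=b_i=|\mathcal{B}_{yx}|$ and $\sigma$ is injective, this inclusion is a bijection. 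Hence $\sigma(B_{xy})=\sum_{z\in\mathcal{B}_{xy}}\widehat{\sigma(z)}=B_{yx}$, and likewise $\sigma(C_{xy})=C_{yx}$. Therefore
\begin{equation*}
\sigma\bigl(\overline{B_{xy}}\bigr)=\sigma(B_{xy})-\zeta\,\sigma(\widehat{x})=B_{yx}-\zeta\widehat{y}=\overline{B_{yx}}=\overline{B_{xy}},
\end{equation*}
where the last step uses (\ref{barbca}); the same computation gives $\sigma(\overline{C_{xy}})=\overline{C_{xy}}$. Thus $\overline{B_{xy}}$ and $\overline{C_{xy}}$ lie in the eigenspace of the restriction of $\sigma$ to $\Fix(x,y)$ for the eigenvalue $1$, which is $\overline{\Fix}(x,y)$.

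There is no real obstacle here; the only delicate point is the identity $\sigma(\mathcal{B}_{xy})=\mathcal{B}_{yx}$, which rests on the distance-invariance of Lemma \ref{presdist}, together with the bookkeeping needed to read off the symmetry (\ref{barbca}) from Lemma \ref{balancebc}. Alternatively, one could bypass $\sigma$ entirely: substituting the linear combinations of Lemma \ref{bc} into Definition \ref{barbcdef} and using $\zeta=q^{2i}[k-i][n-k-i]$ and $\xi=q[i][i-2]$ (together with the elementary identity $q[i-1]^2-q[i][i-2]=q^{i-1}$) shows directly that $\overline{B_{xy}}$ and $\overline{C_{xy}}$ lie in the span of $\widehat{x}+\widehat{y}$, $\widehat{x\cap y}$, $\widehat{x+y}$, which is the geometric basis (\ref{barbasis1}) of $\overline{\Fix}(x,y)$.
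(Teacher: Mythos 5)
Your proposal is correct and follows essentially the same route as the paper: show the two vectors lie in $\Fix(x,y)$ and are fixed by a swap $\sigma$ of $x$ and $y$, then invoke (\ref{barbca}) to conclude they sit in the $+1$ eigenspace $\overline{\Fix}(x,y)$. The only differences are cosmetic — you cite Lemma \ref{fixcontain2} plus closure under linear combinations where the paper cites Theorem \ref{2basis}, and you spell out the step $\sigma(B_{xy})=B_{yx}$ that the paper dismisses as "by construction."
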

\begin{proof}
By Theorem \ref{2basis} the subspace $\Fix(x,y)$ contains the vectors $\overline{B_{xy}}$ and $\overline{C_{xy}}$. Let $\sigma\in GL(V)$ swap $x$ and $y$. By construction, $\sigma\Bigl(\overline{B_{xy}}\Bigr)=\overline{B_{yx}}$ and $\sigma\Bigl(\overline{C_{xy}}\Bigr)=\overline{C_{yx}}$. By (\ref{barbca}), the vectors $\overline{B_{xy}}$ and $\overline{C_{xy}}$ are fixed by $\sigma$. The result follows.
\end{proof}

\begin{lemma}
\label{barbc}
For $x,y\in X$ such that $1<\partial(x,y)<k$, 
\begin{equation}
\begin{aligned}
    \nonumber
    \overline{B_{xy}}&=-q^{2i}[n-k-i]\widehat{x\cap y}-q^i[k-i]\widehat{x+y},\\
    \overline{C_{xy}}&=q^{i-1}(\widehat{x}+\widehat{y})+q^{i}[i-1]\widehat{x\cap y}+[i-1]\widehat{x+y},
\end{aligned}
\end{equation}
where $i=\partial(x,y)$.
\begin{proof}
Routine using Lemma \ref{bc} and Definition \ref{barbcdef}. 
\end{proof}
\end{lemma}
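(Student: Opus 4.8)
The plan is to substitute the closed forms for $B_{xy}$ and $C_{xy}$ from Lemma \ref{bc} into Definition \ref{barbcdef}, using the values of $\zeta$ and $\xi$ from Lemma \ref{balancebc}, and then to collect terms in the geometric basis for $\Fix(x,y)$.

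First I would treat $\overline{B_{xy}}$. By Definition \ref{barbcdef} we have $\overline{B_{xy}}=B_{xy}-\zeta\widehat{x}$ with $\zeta=q^{2i}[k-i][n-k-i]$. Inserting the expression for $B_{xy}$ from (\ref{blin}), the leading term $q^{2i}[k-i][n-k-i]\widehat{x}$ is precisely $\zeta\widehat{x}$ and cancels, leaving $\overline{B_{xy}}=-q^{2i}[n-k-i]\widehat{x\cap y}-q^i[k-i]\widehat{x+y}$. This is the asserted formula, and the step is a single cancellation.

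Next I would treat $\overline{C_{xy}}=C_{xy}-\xi\widehat{x}$ with $\xi=q[i][i-2]$. Inserting (\ref{clin}), the coefficient of $\widehat{x}$ becomes $q[i-1]^2-q[i][i-2]$, while the coefficients of $\widehat{y}$, $\widehat{x\cap y}$, $\widehat{x+y}$ are carried over unchanged, namely $q^{i-1}$, $q^i[i-1]$, $[i-1]$. The only computation needed is the $q$-identity $[i-1]^2-[i][i-2]=q^{i-2}$; using $[m]=(q^m-1)/(q-1)$ this amounts to $(q^{i-1}-1)^2-(q^i-1)(q^{i-2}-1)=q^{i-2}(q-1)^2$, which I would verify by expanding both sides. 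Hence the coefficient of $\widehat{x}$ equals $q^{i-1}$, matching that of $\widehat{y}$; combining them gives $q^{i-1}(\widehat{x}+\widehat{y})$, and I would then read off $\overline{C_{xy}}=q^{i-1}(\widehat{x}+\widehat{y})+q^i[i-1]\widehat{x\cap y}+[i-1]\widehat{x+y}$, as claimed.

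There is no real obstacle here; the only point requiring any care is the elementary identity $[i-1]^2-[i][i-2]=q^{i-2}$, together with the observation that the constant $\xi=q[i][i-2]$ supplied by Lemma \ref{balancebc} is exactly what is needed to make the $\widehat{x}$- and $\widehat{y}$-coefficients of $C_{xy}-\xi\widehat{x}$ coincide — which is precisely why $\overline{B_{xy}}$ and $\overline{C_{xy}}$ lie in $\overline{\Fix}(x,y)$, as already recorded in Lemma \ref{barcontain}.
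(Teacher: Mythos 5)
Your proposal is correct and follows exactly the route the paper intends: substitute the expansions of $B_{xy}$, $C_{xy}$ from Lemma \ref{bc} into Definition \ref{barbcdef} and simplify, the only nontrivial step being the identity $[i-1]^2-[i][i-2]=q^{i-2}$, which you verify correctly. The paper's proof is just the word "routine," so your write-up supplies precisely the computation that was left implicit.
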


\begin{theorem}
\label{2barbasis}
    For $x,y\in X$ such that $1<\partial(x,y)<k$, the following vectors form a basis for $\overline{\Fix}(x,y)$:
    \begin{equation}
        \label{barbasis2}
        \widehat{x}+\widehat{y},\qquad \qquad \overline{B_{xy}},\qquad \qquad \overline{C_{xy}}.
    \end{equation}
\end{theorem}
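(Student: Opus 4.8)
The plan is to show that the three vectors in (\ref{barbasis2}) lie in $\overline{\Fix}(x,y)$, and then argue that they are linearly independent; since $\overline{\Fix}(x,y)$ has dimension $3$ by the geometric basis (\ref{barbasis1}), this will establish that (\ref{barbasis2}) is a basis. The membership is already done: $\widehat{x}+\widehat{y}$ is one of the vectors in the geometric basis (\ref{barbasis1}), and Lemma \ref{barcontain} gives that $\overline{B_{xy}}$ and $\overline{C_{xy}}$ both lie in $\overline{\Fix}(x,y)$. So the whole content is linear independence, equivalently the invertibility of the transition matrix expressing (\ref{barbasis2}) in terms of the geometric basis (\ref{barbasis1}).

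First I would read off from Lemma \ref{barbc} the matrix of coefficients that writes the vectors $\widehat{x}+\widehat{y}$, $\overline{B_{xy}}$, $\overline{C_{xy}}$ in the geometric basis $\widehat{x}+\widehat{y}$, $\widehat{x\cap y}$, $\widehat{x+y}$. Using Lemma \ref{barbc} this matrix is
\begin{equation*}
    \begin{pmatrix}
    1 & 0 & q^{i-1}\\
    0 & -q^{2i}[n-k-i] & q^i[i-1]\\
    0 & -q^i[k-i] & [i-1]
    \end{pmatrix}.
\end{equation*}
It then suffices to check that the determinant of this matrix is nonzero. Expanding along the first column, the determinant equals the determinant of the lower-right $2\times 2$ block, namely $-q^{2i}[n-k-i][i-1]+q^{2i}[k-i][i-1]=q^{2i}[i-1]\bigl([k-i]-[n-k-i]\bigr)$. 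Since $[k-i]-[n-k-i]=-q^{k-i}[n-2k]$ (a routine identity with the $[\,\cdot\,]$ notation), the determinant is $-q^{3i-(\,\cdot\,)}\cdot\text{(nonzero)}$; more precisely it is a nonzero scalar multiple of $[i-1][n-2k]$. As in the proof of Theorem \ref{2basis}, $[i-1]\neq 0$ because $1<i<k$, and $[n-2k]\neq 0$ because $n>2k$, so the determinant is nonzero.

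Therefore the transition matrix is invertible, the three vectors in (\ref{barbasis2}) are linearly independent, and since they lie in the $3$-dimensional space $\overline{\Fix}(x,y)$ they form a basis. The one point requiring a little care — and the only place a computation is hidden — is the determinant evaluation, i.e.\ simplifying $[k-i]-[n-k-i]$ to confirm it is a nonzero multiple of $[n-2k]$; once that is in hand the argument is immediate and parallels Theorem \ref{2basis} exactly. I do not anticipate any real obstacle beyond bookkeeping with the $q$-powers.
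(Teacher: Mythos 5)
Your proposal is correct and follows the paper's proof essentially verbatim: membership via Lemma \ref{barcontain}, then invertibility of the coefficient matrix from Lemma \ref{barbc}, whose determinant simplifies (via $[k-i]-[n-k-i]=-q^{k-i}[n-2k]$) to $-q^{k+i}[i-1][n-2k]\neq 0$. No issues.
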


\begin{proof}
   We first find the matrix of coefficients when we write the vectors in (\ref{barbasis2}) as linear combinations in the geometric basis for $\overline{\Fix}(x,y)$. From Lemma \ref{barbc}, we routinely obtain the following matrix of coefficients:
    \begin{equation}
    \label{bartran2}
        \begin{pmatrix}
        1 & 0 & q^{i-1}\\
        0 & -q^{2i}[n-k-i] & q^i[i-1]\\
        0 & -q^{i}[k-i] & [i-1]\\
        \end{pmatrix},
    \end{equation}
    where $i=\partial(x,y)$.
    
    It suffices to show that the determinant of this matrix is nonzero.
    
    The determinant is equal to $-q^{k+i}[i-1][n-2k]$. We have $[i-1]\neq 0$ since $1<i<k$. Also, $[n-2k]\neq 0$ since $n>2k$. Hence the determinant of the matrix (\ref{bartran2}) is nonzero. The result follows.
\end{proof}

\begin{definition}
    Let $x,y\in X$ satisfy $1<\partial(x,y)<k$. By the \emph{combinatorial basis for $\overline{\Fix}(x,y)$}, we mean the basis formed by the vectors in (\ref{barbasis2}).
\end{definition}

Next we give the transition matrices between the geometric basis and the combinatorial basis for $\overline{\Fix}(x,y)$. 

\begin{theorem}
\label{bartrans2}
Let $x,y\in X$ satisfy $1<\partial(x,y)<k$. Write $i=\partial(x,y)$. The transition matrix from the geometric basis to the combinatorial basis for $\overline{\Fix}(x,y)$ is equal to the matrix {\rm{(\ref{bartran2})}}.

The transition matrix from the combinatorial basis to the geometric basis for $\overline{\Fix}(x,y)$ is equal to
\begin{equation}
    \label{inversebartran2}
    \mathlarger{\begin{pmatrix}
    1 & \frac{[k-i]}{q^{k-i+1}[i-1][n-2k]} & \frac{-[n-k-i]}{q^{k-2i+1}[i-1][n-2k]} \\
    0 & \frac{-1}{q^{k+i}[n-2k]} & \frac{1}{q^{k}[n-2k]} \\
    0 & \frac{-[k-i]}{q^{k}[i-1][n-2k]} & \frac{[n-k-i]}{q^{k-i}[i-1][n-2k]}
    \end{pmatrix}}.
\end{equation}
\end{theorem}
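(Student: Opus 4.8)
The plan is to treat the two assertions separately, and to reduce everything to the facts already established for the combinatorial basis in Theorem \ref{2barbasis}. For the first assertion, recall that Lemma \ref{barbc} expresses each vector of the combinatorial basis $(\ref{barbasis2})$ as a linear combination of the geometric basis $(\ref{barbasis1})$ for $\overline{\Fix}(x,y)$, and that the matrix of these coefficients is exactly $(\ref{bartran2})$; its columns are the coordinate vectors of $\widehat{x}+\widehat{y}$, $\overline{B_{xy}}$, $\overline{C_{xy}}$ with respect to the geometric basis. Under the transition-matrix convention of \cite[p.~352]{Ter1}, the matrix whose columns are the coordinates of the new basis relative to the old basis \emph{is} the transition matrix from the old (geometric) basis to the new (combinatorial) basis. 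Hence the first assertion needs no further argument beyond citing Theorem \ref{2barbasis} and this convention.

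For the second assertion, the transition matrix in the reverse direction is the inverse of $(\ref{bartran2})$. The determinant of $(\ref{bartran2})$ was computed in the proof of Theorem \ref{2barbasis} to be $-q^{k+i}[i-1][n-2k]$, which is nonzero under the standing hypotheses $1<i<k$ and $n>2k$, so the inverse exists. Since the first column of $(\ref{bartran2})$ is $(1,0,0)^{T}$, the matrix is block upper triangular (with respect to the row/column partition $\{1\}\cup\{2,3\}$), so its inverse is block upper triangular as well, with a $1$ in the top-left corner; this already matches the shape of $(\ref{inversebartran2})$. The substantive computation is therefore: (a) invert the lower-right $2\times 2$ block with rows $(-q^{2i}[n-k-i],\,q^i[i-1])$ and $(-q^i[k-i],\,[i-1])$ via the adjugate formula, and (b) back-substitute through the entry $q^{i-1}$ in position $(1,3)$ of $(\ref{bartran2})$ to obtain the two remaining top-row entries of the inverse.

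The main obstacle will be the $q$-integer bookkeeping needed to put the raw rational expressions into the tidy form displayed in $(\ref{inversebartran2})$. The determinant of the lower-right $2\times 2$ block equals $q^{2i}[i-1]\bigl([k-i]-[n-k-i]\bigr)$, and the key simplification is the identity $[n-k-i]-[k-i]=q^{k-i}[n-2k]$, which is the standard relation $[a+b]=[a]+q^{a}[b]$ with $a=k-i$ and $b=n-2k$; this shows the block determinant is $-q^{k+i}[i-1][n-2k]$, consistent with the determinant of the whole matrix. Each entry of $(\ref{inversebartran2})$ is then a signed cofactor of $(\ref{bartran2})$ divided by this determinant, after cancelling the common powers of $q$ (for instance $q^{2i}/q^{k+i}=q^{-(k-i)}$ produces the factor $q^{k-i}$ in the $(3,3)$ entry, and $q^{i-1}/q^{k}=q^{-(k-i+1)}$ the factor $q^{k-i+1}$ in the $(1,2)$ entry); the only delicate points are these power-of-$q$ cancellations and the sign placements. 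As a final check I would multiply $(\ref{bartran2})$ by $(\ref{inversebartran2})$ and verify the product is the $3\times 3$ identity, once more invoking $[n-k-i]-[k-i]=q^{k-i}[n-2k]$.
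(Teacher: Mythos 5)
Your proposal is correct and takes essentially the same route as the paper, whose proof simply notes that the first assertion follows from the construction of the matrix (\ref{bartran2}) and that the second follows by inverting it; your block-triangular inversion, the identity $[n-k-i]-[k-i]=q^{k-i}[n-2k]$, and the resulting determinant $-q^{k+i}[i-1][n-2k]$ all check out. You have merely spelled out the "routine" computation that the paper leaves implicit.
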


\begin{proof}
The first assertion is immediate from the construction of the matrix (\ref{bartran2}). For the second assertion, take the inverse of the matrix (\ref{bartran2}) to obtain the matrix (\ref{inversebartran2}).
\end{proof}

\begin{theorem}
\label{barmaintheorem}
    For $x,y\in X$ such that $1<\partial(x,y)<k$, we have
    \begin{equation*}
        \begin{aligned}
            \widehat{x\cap y}&=\frac{[k-i]}{q^{k-i+1}[i-1][n-2k]}\bigl(\widehat{x}+\widehat{y}\bigr)+\frac{-1}{q^{k+i}[n-2k]}\overline{B_{xy}}+\frac{-[k-i]}{q^{k}[i-1][n-2k]}\overline{C_{xy}},\\
            \widehat{x+y}&=\frac{-[n-k-i]}{q^{k-2i+1}[i-1][n-2k]}\bigl(\widehat{x}+\widehat{y}\bigr)+\frac{1}{q^{k}[n-2k]}\overline{B_{xy}}+\frac{[n-k-i]}{q^{k-i}[i-1][n-2k]}\overline{C_{xy}},
        \end{aligned}
    \end{equation*}
    where $i=\partial(x,y)$.
\end{theorem}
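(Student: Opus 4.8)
The plan is to deduce Theorem~\ref{barmaintheorem} directly from Theorem~\ref{bartrans2}, in exactly the way Theorem~\ref{maintheorem} follows from the matrix (\ref{inversetran2}). By Theorem~\ref{bartrans2}, the matrix (\ref{inversebartran2}) is the transition matrix from the combinatorial basis $\widehat{x}+\widehat{y},\ \overline{B_{xy}},\ \overline{C_{xy}}$ to the geometric basis $\widehat{x}+\widehat{y},\ \widehat{x\cap y},\ \widehat{x+y}$ for $\overline{\Fix}(x,y)$. Under the convention of \cite[p.~352]{Ter1}, the $j$-th column of the transition matrix from a basis $\mathcal{A}$ to a basis $\mathcal{B}$ records the coordinates of the $j$-th vector of $\mathcal{B}$ with respect to $\mathcal{A}$. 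So the proof amounts to reading off the second and third columns of (\ref{inversebartran2}).

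Concretely, I would first line up the orderings: the first, second, third columns of (\ref{inversebartran2}) correspond to $\widehat{x}+\widehat{y}$, $\widehat{x\cap y}$, $\widehat{x+y}$, and its first, second, third rows correspond to $\widehat{x}+\widehat{y}$, $\overline{B_{xy}}$, $\overline{C_{xy}}$. Then the second column expresses $\widehat{x\cap y}$ as the stated combination of $\widehat{x}+\widehat{y}$, $\overline{B_{xy}}$, $\overline{C_{xy}}$, and the third column does the same for $\widehat{x+y}$. That is all that is required; the statement is just a transcription of those two columns.

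As a cross-check (and an alternative that avoids invoking the transition-matrix convention), I would instead start from Lemma~\ref{barbc}, which gives $\overline{B_{xy}}$ and $\overline{C_{xy}}$ as explicit linear combinations of $\widehat{x}+\widehat{y}$, $\widehat{x\cap y}$, $\widehat{x+y}$, and solve the resulting $2\times 2$ linear system for $\widehat{x\cap y}$ and $\widehat{x+y}$ (with $\widehat{x}+\widehat{y}$ treated as a known vector) by Cramer's rule. Its determinant is $-q^{k+i}[i-1][n-2k]$ — the same quantity computed in the proof of Theorem~\ref{2barbasis}, using the identity $[k-i]-[n-k-i]=-q^{k-i}[n-2k]$ — which is nonzero since $1<i<k$ and $n>2k$; hence the solution is unique, and simplifying the numerators with the usual $[m]$-arithmetic recovers the claimed formulas.

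There is essentially no obstacle: the substantive work was already done in Lemma~\ref{barbc} and Theorem~\ref{bartrans2}. The only care needed is bookkeeping — applying the transition matrix in the correct direction (combinatorial $\to$ geometric, not the reverse), keeping the basis orderings straight, and performing the routine simplifications of powers of $q$ and of $[m]$-symbols (for instance $q^{2i-1}/q^{k+i}=q^{i-1}/q^{k}$) when matching the computation to the entries of (\ref{inversebartran2}).
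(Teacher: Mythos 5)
Your proposal is correct and matches the paper's proof, which is exactly "routine from the matrix (\ref{inversebartran2})": one reads off the second and third columns of the transition matrix from the combinatorial basis to the geometric basis for $\overline{\Fix}(x,y)$. The cross-check via Lemma \ref{barbc} and the $2\times 2$ system with determinant $-q^{k+i}[i-1][n-2k]$ is a sound (if redundant) alternative.
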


\begin{proof}
    Routine from the matrix (\ref{inversebartran2}).
\end{proof}

\section{The subspace $\Fix(x\cap y,x+y)$}
Pick distinct $x,y\in X$ such that $1<\partial(x,y)<k$. In this section we describe the subspace $\Fix(x\cap y,x+y)$. We show that the subspace $\Fix(x\cap y,x+y)$ is contained in $\overline{\Fix}(x,y)$. We construct a basis for $\Fix(x\cap y,x+y)$, called the combinatorial basis; this basis is derived from the combinatorial basis for $\overline{\Fix}(x,y)$. We give the transition matrices between the geometric basis and the combinatorial basis for $\Fix(x\cap y,x+y)$. At the end of the section, we describe the orthogonal complement of $\Fix(x\cap y,x+y)$ in $\overline{\Fix}(x,y)$.

\begin{theorem}
\label{1checkbasis}
For $x,y\in X$ such that $1<\partial(x,y)<k$, the following vectors form a basis for $\Fix(x\cap y, x+y)$:
\begin{equation}
        \label{checkbasis1}
        \widehat{x\cap y},\qquad \qquad \widehat{x+y}. 
    \end{equation}
\end{theorem}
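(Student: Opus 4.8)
The plan is to observe that $\widehat{x\cap y}$ and $\widehat{x+y}$ are exactly the geometric basis for $\Fix(u,v)$ in the sense of Definition \ref{geometricdef}, specialized to $u=x\cap y$ and $v=x+y$. So the proof reduces to checking that the pair $(x\cap y,\, x+y)$ meets the standing hypotheses of Section \ref{stab2} (distinct, proper, nonzero subspaces of $V$) and then determining which of the six cases of Lemma \ref{main1}/Lemma \ref{main2} it belongs to.

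First I would set $i=\partial(x,y)$ and record the relevant dimensions: by Lemma \ref{introlem0}, $\dim(x\cap y)=k-i$, and by Lemma \ref{sumdim}, $\dim(x+y)=k+i$. Since $1<i<k$, we get $1\le k-i$ and $k+i<2k<n$ (using $n>2k$), so both $x\cap y$ and $x+y$ are nonzero proper subspaces of $V$; and since $k-i\ne k+i$, the two subspaces are distinct. Hence Lemma \ref{main2} is applicable to the ordered pair $(x\cap y,\, x+y)$.

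Next I would identify the case. Since $x\cap y\subseteq x+y$ holds for any two subspaces, the pair $(x\cap y,\, x+y)$ falls into Case 5 of the classification (the first subspace is contained in the second), and not any of the generic Cases 1--4 or Case 6. Reading off Case 5 of Lemma \ref{main2}, a basis for $\Fix(x\cap y,\, x+y)$ is given by $\widehat{x\cap y}$ and $\widehat{x+y}$, which is precisely the assertion.

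I expect no genuine obstacle here. The only points requiring a little care are the dimension bookkeeping needed to guarantee that $x\cap y$ and $x+y$ are honestly proper and nonzero (so that Lemma \ref{main2} applies at all), and the observation that the inclusion $x\cap y\subseteq x+y$ pins us down to Case 5; once these are in place the result is immediate from Lemma \ref{main2}.
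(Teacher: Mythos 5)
Your proposal is correct and follows essentially the same route as the paper: verify that $x\cap y$ and $x+y$ are distinct, nonzero, proper subspaces, note the inclusion $x\cap y\subseteq x+y$, and read off the basis from Case 5 of Lemma \ref{main2}. Your dimension bookkeeping via Lemmas \ref{introlem0} and \ref{sumdim} is a slightly more explicit version of the paper's brief verification, but the argument is the same.
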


\begin{proof}
    Since $\partial(x,y)<k$, we have $x\cap y\neq 0$. Note that $x\cap y\subseteq x+y$. Since $n>2k$, we have $x+y\neq V$. The result follows from Case 5 of Lemma \ref{main2}.
\end{proof}

In view of Definition \ref{geometricdef}, the basis (\ref{checkbasis1}) is the geometric basis for $\Fix(x\cap y,x+y)$.

\begin{corollary}
    \label{13.1cor}
    For $x,y\in X$ such that $1<\partial(x,y)<k$, the subspace $\Fix(x\cap y,x+y)$ is contained in $\overline{\Fix}(x,y)$. 
\end{corollary}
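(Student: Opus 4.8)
The plan is to read the containment straight off the two bases that have just been constructed, so essentially no computation is required. First I would invoke Theorem \ref{1checkbasis}, which gives that $\Fix(x\cap y,x+y)$ is spanned by $\widehat{x\cap y}$ and $\widehat{x+y}$. Next I would recall the geometric basis for $\overline{\Fix}(x,y)$ from Lemma \ref{sigmalem}(ii), namely the vectors in (\ref{barbasis1}): $\widehat{x}+\widehat{y}$, $\widehat{x\cap y}$, $\widehat{x+y}$. Since $\widehat{x\cap y}$ and $\widehat{x+y}$ both occur among these basis vectors, each of them lies in $\overline{\Fix}(x,y)$; as $\overline{\Fix}(x,y)$ is a subspace of $E$, it therefore contains every linear combination of $\widehat{x\cap y}$ and $\widehat{x+y}$, i.e.\ it contains $\Fix(x\cap y,x+y)$.

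A slightly more conceptual route, which I would also note, goes through the swapping element. Pick $\sigma\in GL(V)$ that swaps $x$ and $y$, which exists by Lemma \ref{exist}. Since $\sigma$ is bijective and linear, $\sigma(x\cap y)=\sigma(x)\cap\sigma(y)=y\cap x=x\cap y$ and $\sigma(x+y)=\sigma(x)+\sigma(y)=x+y$, so $\sigma\in\Stab(x\cap y,x+y)$. Moreover, any element of $\Stab(x,y)$ fixes $x\cap y$ and $x+y$, so $\Stab(x,y)\subseteq\Stab(x\cap y,x+y)$ and hence $\Fix(x\cap y,x+y)\subseteq\Fix(x,y)$. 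By Definition \ref{defbarfix} together with Lemma \ref{sigmalem}(ii), the subspace $\overline{\Fix}(x,y)$ is exactly the eigenspace of $\sigma|_{\Fix(x,y)}$ for the eigenvalue $1$. Every vector of $\Fix(x\cap y,x+y)$ lies in $\Fix(x,y)$ and is fixed by $\sigma$ (as $\sigma\in\Stab(x\cap y,x+y)$), hence lies in $\overline{\Fix}(x,y)$, which is the claim.

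I do not foresee any real obstacle: both arguments are immediate from results established earlier in the section. The only point requiring a moment's care is to keep the hypothesis $1<\partial(x,y)<k$ in force, so that Theorem \ref{1checkbasis} applies (we need $x\cap y\neq0$ and $x+y\neq V$, both of which follow from $\partial(x,y)<k$ and $n>2k$ respectively) and so that a swapping element $\sigma$ exists (Lemma \ref{exist}); all of this has already been checked under this hypothesis.
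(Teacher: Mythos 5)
Your first argument is exactly the paper's proof: the geometric basis $\widehat{x\cap y}, \widehat{x+y}$ of $\Fix(x\cap y,x+y)$ from Theorem \ref{1checkbasis} is a subset of the basis (\ref{barbasis1}) for $\overline{\Fix}(x,y)$, so the containment is immediate. Your second, group-theoretic route (via $\Stab(x,y)\subseteq\Stab(x\cap y,x+y)$ and the swapping element $\sigma$ fixing $x\cap y$ and $x+y$) is also correct, but the paper does not need it.
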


\begin{proof}
    The geometric basis for $\Fix(x\cap y,x+y)$ is a subset of (\ref{barbasis1}). The vectors in (\ref{barbasis1}) form a basis for $\overline{\Fix}(x,y)$. The result follows.
\end{proof}

In Theorem \ref{1checkbasis}, we gave a basis for $\Fix(x\cap y,x+y)$. Our next goal is to use the combinatorial basis for $\overline{\Fix}(x,y)$ to find another basis for $\Fix(x\cap y, x+y)$.

\begin{definition}
\label{checkbcdef}
For $x,y\in X$ such that $1<\partial(x,y)<k$, define 
\begin{equation}
\nonumber
    \widecheck{B_{xy}}=\overline{B_{xy}},\qquad \qquad \widecheck{C_{xy}}=\overline{C_{xy}}-q^{i-1}\bigl(\widehat{x}+\widehat{y}\bigr),
\end{equation}
where $i=\partial(x,y)$.
\end{definition}

\begin{lemma}
\label{checkbc}
For $x,y\in X$ such that $1<\partial(x,y)<k$, 
\begin{equation}
\begin{aligned}
    \nonumber
    \widecheck{B_{xy}}&=-q^{2i}[n-k-i]\widehat{x\cap y}-q^i[k-i]\widehat{x+y},\\
    \widecheck{C_{xy}}&=q^{i}[i-1]\widehat{x\cap y}+[i-1]\widehat{x+y},
\end{aligned}
\end{equation}
where $i=\partial(x,y)$.

\begin{proof}
Routine using Lemma \ref{barbc} and Definition \ref{checkbcdef}.
\end{proof}
\end{lemma}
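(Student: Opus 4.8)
The plan is to obtain both equations by a direct substitution of the closed forms from Lemma~\ref{barbc} into Definition~\ref{checkbcdef}, followed by collecting terms; no new ideas are required, which is why the lemma is labelled routine. First I would treat $\widecheck{B_{xy}}$. By Definition~\ref{checkbcdef} we have $\widecheck{B_{xy}}=\overline{B_{xy}}$, so the first displayed equation is literally the first line of Lemma~\ref{barbc} and there is nothing further to check.

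Next I would treat $\widecheck{C_{xy}}$. By Definition~\ref{checkbcdef}, $\widecheck{C_{xy}}=\overline{C_{xy}}-q^{i-1}\bigl(\widehat{x}+\widehat{y}\bigr)$. Substituting the second line of Lemma~\ref{barbc}, namely $\overline{C_{xy}}=q^{i-1}\bigl(\widehat{x}+\widehat{y}\bigr)+q^{i}[i-1]\widehat{x\cap y}+[i-1]\widehat{x+y}$, the two copies of $q^{i-1}\bigl(\widehat{x}+\widehat{y}\bigr)$ cancel and what remains is exactly $q^{i}[i-1]\widehat{x\cap y}+[i-1]\widehat{x+y}$, the second displayed equation. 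This cancellation is precisely the purpose of the definition of $\widecheck{C_{xy}}$: it kills the $\widehat{x}+\widehat{y}$-component so that $\widecheck{C_{xy}}$ lands in $\mathrm{Span}\bigl\{\widehat{x\cap y},\widehat{x+y}\bigr\}$, which by Theorem~\ref{1checkbasis} is $\Fix(x\cap y,x+y)$.

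There is no genuine obstacle here; the only things worth a moment's care are bookkeeping points that are already in place. We use $i=\partial(x,y)$ with $1<i<k$, so that $\widehat{x\cap y}$ and $\widehat{x+y}$ are the well-defined images under the map~(\ref{hardhat}) of the subspaces $x\cap y\in P_{k-i}$ and $x+y\in P_{k+i}$ (Lemmas~\ref{introlem0} and~\ref{sumdim}), and so that the scalars appearing are meaningful. Granting this, the lemma reduces to the single substitution described above.
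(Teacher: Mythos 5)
Your proposal is correct and matches the paper's proof, which simply cites Lemma \ref{barbc} and Definition \ref{checkbcdef} as you do: the first equation is immediate from $\widecheck{B_{xy}}=\overline{B_{xy}}$, and the second follows from the cancellation of the $q^{i-1}\bigl(\widehat{x}+\widehat{y}\bigr)$ terms. Nothing further is needed.
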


\begin{lemma}
    \label{checkcontain}
    For $x,y\in X$ such that $1<\partial(x,y)<k$, the subspace $\Fix(x\cap y,x+y)$ contains the following vectors:
\begin{equation}
\label{checkbasis2}
    \widecheck{B_{xy}},\qquad \widecheck{C_{xy}}.
\end{equation}
\end{lemma}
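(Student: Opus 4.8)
The plan is to obtain the conclusion directly from the explicit expansions already in hand, so the proof is essentially a two-line assembly. First I would invoke Lemma \ref{checkbc}, which expresses each of $\widecheck{B_{xy}}$ and $\widecheck{C_{xy}}$ as a linear combination of $\widehat{x\cap y}$ and $\widehat{x+y}$. Next I would recall Theorem \ref{1checkbasis}, which asserts that the vectors $\widehat{x\cap y}$ and $\widehat{x+y}$ form a basis for $\Fix(x\cap y,x+y)$; in particular these two vectors span $\Fix(x\cap y,x+y)$. Combining the two facts, both $\widecheck{B_{xy}}$ and $\widecheck{C_{xy}}$ lie in the span of $\bigl\{\widehat{x\cap y},\widehat{x+y}\bigr\}$, which is $\Fix(x\cap y,x+y)$, and the result follows.

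Since the substantive computation has already been carried out upstream (in Definition \ref{checkbcdef}, Lemma \ref{barbc}, and Lemma \ref{checkbc}), there is no real obstacle here. The only points that need a moment's attention are bookkeeping: the hypotheses $1<\partial(x,y)<k$ and $n>2k$ invoked by Theorem \ref{1checkbasis} are exactly the standing hypotheses of the lemma, so Theorem \ref{1checkbasis} applies, and the coefficients appearing in Lemma \ref{checkbc} are irrelevant to membership in the span — all that matters is that no $\widehat{x}$, $\widehat{y}$, $\widehat{x}+\widehat{y}$ terms survive, which is precisely the point of the correction term subtracted in Definition \ref{checkbcdef}.

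A more conceptual alternative would be to show that $\widecheck{B_{xy}}$ and $\widecheck{C_{xy}}$ are fixed by every element of $\Stab(x\cap y,x+y)$ and then apply the appropriate analogue of Lemma \ref{fixspan}; but because an element of $GL(V)$ fixing $x\cap y$ and $x+y$ need not fix $x$, this route would require extra symmetrization arguments beyond those in Definitions \ref{barbcdef} and \ref{checkbcdef}. The computational route via Lemma \ref{checkbc} is shorter and self-contained, so that is the one I would present.
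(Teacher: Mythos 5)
Your proposal is correct and is essentially identical to the paper's proof: both cite Lemma \ref{checkbc} to write $\widecheck{B_{xy}}$ and $\widecheck{C_{xy}}$ as linear combinations of $\widehat{x\cap y}$ and $\widehat{x+y}$, and then invoke Theorem \ref{1checkbasis} to conclude membership in $\Fix(x\cap y,x+y)$. No gaps.
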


\begin{proof}
    Referring to Lemma \ref{checkbc}, the vectors $\widecheck{B_{xy}}$ and $\widecheck{C_{xy}}$ are linear combinations in (\ref{checkbasis1}). The result follows from Theorem \ref{1checkbasis}.
\end{proof}

\begin{theorem}
\label{2checkbasis}
For $x,y\in X$ such that $1<\partial(x,y)<k$, the two vectors in {\rm{(\ref{checkbasis2})}} form a basis for $\Fix(x\cap y, x+y)$.
\end{theorem}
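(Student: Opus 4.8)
The plan is to mirror the arguments used for Theorem \ref{2basis} and Theorem \ref{2barbasis}: since we already know the target subspace and a geometric basis for it, the whole task reduces to checking that a $2\times 2$ determinant is nonzero. First I would recall from Theorem \ref{1checkbasis} that $\Fix(x\cap y,x+y)$ has dimension $2$, with geometric basis $\widehat{x\cap y},\widehat{x+y}$. By Lemma \ref{checkcontain}, the two vectors $\widecheck{B_{xy}},\widecheck{C_{xy}}$ in (\ref{checkbasis2}) already lie in $\Fix(x\cap y,x+y)$. Hence it suffices to show that $\widecheck{B_{xy}}$ and $\widecheck{C_{xy}}$ are linearly independent, equivalently that the matrix of coefficients obtained by writing them in terms of the geometric basis is invertible.

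Next I would read off that matrix from Lemma \ref{checkbc}, namely
\begin{equation*}
    \begin{pmatrix}
        -q^{2i}[n-k-i] & q^{i}[i-1]\\
        -q^{i}[k-i] & [i-1]
    \end{pmatrix},
\end{equation*}
where $i=\partial(x,y)$, and compute its determinant, which equals $q^{2i}[i-1]\bigl([k-i]-[n-k-i]\bigr)$. The only genuinely computational point is the identity $[k-i]-[n-k-i]=-q^{k-i}[n-2k]$, which follows directly from the definition $[m]=(q^m-1)/(q-1)$ by factoring $q^{k-i}-q^{n-k-i}=q^{k-i}(1-q^{n-2k})$. Substituting, the determinant is $-q^{k+i}[i-1][n-2k]$.

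Finally I would argue that this determinant is nonzero: $[i-1]\neq 0$ because $1<i<k$ forces $i-1\geq 1$, and $[n-2k]\neq 0$ because $n>2k$; also $q^{k+i}\neq 0$. Therefore the coefficient matrix is invertible, so $\widecheck{B_{xy}},\widecheck{C_{xy}}$ are linearly independent, and being two linearly independent vectors in the $2$-dimensional space $\Fix(x\cap y,x+y)$, they form a basis, as claimed. I do not anticipate any real obstacle here; the argument is entirely routine once the determinant identity is in hand, and it parallels the proofs of Theorems \ref{2basis} and \ref{2barbasis} almost verbatim.
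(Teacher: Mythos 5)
Your proposal is correct and follows essentially the same route as the paper: write $\widecheck{B_{xy}},\widecheck{C_{xy}}$ in the geometric basis via Lemma \ref{checkbc}, and show the resulting $2\times 2$ coefficient matrix has nonzero determinant $-q^{k+i}[i-1][n-2k]$. Your explicit verification of the identity $[k-i]-[n-k-i]=-q^{k-i}[n-2k]$ just fills in a step the paper labels routine.
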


\begin{proof}    
    We first find the matrix of coefficients when we write the vectors in (\ref{checkbasis2}) as linear combinations in the geometric basis for $\Fix(x\cap y,x+y)$. From Lemma \ref{checkbc}, we routinely obtain the following matrix of coefficients:
    \begin{equation}
    \label{checktran2}
    \begin{pmatrix}
    -q^{2i}[n-k-i] & q^i[i-1]\\
    -q^i[k-i] & [i-1]\\
    \end{pmatrix},
    \end{equation}
    where $i=\partial(x,y)$.
    
    It suffices to show that the determinant of this matrix is nonzero.
    
    The determinant is equal to $-q^{k+i}[i-1][n-2k]$. We have $[i-1]\neq 0$ since $1<i<k$. Also, $[n-2k]\neq 0$ since $n>2k$. Hence the determinant of the matrix (\ref{checktran2}) is nonzero. The result follows.
\end{proof}

\begin{definition}
    Let $x,y\in X$ satisfy $1<\partial(x,y)<k$. By the \emph{combinatorial basis for $\Fix(x\cap y,x+y)$}, we mean the basis formed by the vectors in $(\ref{checkbasis2})$.
\end{definition}

Next we display the transition matrices between the geometric basis and the combinatorial basis for $\Fix(x\cap y,x+y)$.

\begin{theorem}
\label{checktrans2}
Let $x,y\in X$ satisfy $1<\partial(x,y)<k$. Write $i=\partial(x,y)$. The transition matrix from the geometric basis to the combinatorial basis for $\Fix(x\cap y,x+y)$ is equal to the matrix {\rm{(\ref{checktran2})}}.

The transition matrix from the combinatorial basis to the geometric basis for $\Fix(x\cap y,x+y)$ is equal to
\begin{equation}
    \label{inversechecktran2}
    \mathlarger{\begin{pmatrix}
    \frac{-1}{q^{k+i}[n-2k]} & \frac{1}{q^k[n-2k]}\\
    \frac{-[k-i]}{q^k[i-1][n-2k]} & \frac{[n-k-i]}{q^{k-i}[i-1][n-2k]} 
    \end{pmatrix}}.
\end{equation}
\end{theorem}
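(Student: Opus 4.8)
The plan is to mirror the two-step argument already used for Theorems \ref{trans2} and \ref{bartrans2}. For the first assertion I would simply observe that, in the proof of Theorem \ref{2checkbasis}, the matrix (\ref{checktran2}) was constructed column-by-column as the matrix of coefficients expressing the combinatorial basis vectors $\widecheck{B_{xy}}, \widecheck{C_{xy}}$ in terms of the geometric basis $\widehat{x\cap y}, \widehat{x+y}$ (these expansions are precisely Lemma \ref{checkbc}). Under the transition-matrix convention of \cite[p.~352]{Ter1} used throughout the paper, this is by definition the transition matrix from the geometric basis to the combinatorial basis for $\Fix(x\cap y, x+y)$, so the first assertion needs no further argument.

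For the second assertion I would invoke the standard fact that the transition matrix from the combinatorial basis to the geometric basis is the inverse of the one going the other way; thus it remains only to invert the $2\times 2$ matrix (\ref{checktran2}). Its determinant was already shown in the proof of Theorem \ref{2checkbasis} to equal $-q^{k+i}[i-1][n-2k]$, which is nonzero since $1<i<k$ gives $[i-1]\neq 0$ and $n>2k$ gives $[n-2k]\neq 0$. So the inverse of (\ref{checktran2}) equals $\frac{1}{-q^{k+i}[i-1][n-2k]}$ times the adjugate, whose first row is $([i-1],\; -q^i[i-1])$ and whose second row is $(q^i[k-i],\; -q^{2i}[n-k-i])$; performing the four scalar divisions and cancelling powers of $q$ produces exactly the matrix (\ref{inversechecktran2}).

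Neither step is a genuine obstacle; the only things to watch are bookkeeping conventions rather than mathematics. First, one must be careful about which of the two matrices is labelled ``from geometric to combinatorial,'' but this is pinned down the moment one recalls that (\ref{checktran2}) was literally assembled from Lemma \ref{checkbc}. Second, the final simplification is routine $q$-integer arithmetic --- for instance $[i-1]/(q^{k+i}[i-1][n-2k]) = 1/(q^{k+i}[n-2k])$, $q^i[i-1]/(q^{k+i}[i-1][n-2k]) = 1/(q^k[n-2k])$, and $q^{2i}[n-k-i]/(q^{k+i}[i-1][n-2k]) = [n-k-i]/(q^{k-i}[i-1][n-2k])$. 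As a sanity check I would verify that the product of (\ref{checktran2}) and (\ref{inversechecktran2}) is the identity, or equivalently that the determinant of (\ref{inversechecktran2}) is the reciprocal of $-q^{k+i}[i-1][n-2k]$, using the identity $[k-i]-[n-k-i] = -q^{k-i}[n-2k]$ that already underlies the determinant computation for (\ref{checktran2}).
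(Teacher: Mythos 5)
Your proposal is correct and follows essentially the same (very terse) route as the paper: the first assertion is immediate from how (\ref{checktran2}) was assembled in the proof of Theorem \ref{2checkbasis}, and the second assertion is just the inversion of that $2\times 2$ matrix, whose determinant $-q^{k+i}[i-1][n-2k]$ was already computed there. Your explicit adjugate and the simplifications of the four entries all check out.
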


\begin{proof}
The first assertion is immediate from the construction of the matrix (\ref{checktran2}). For the second assertion, take the inverse of the matrix (\ref{checktran2}) to obtain the matrix (\ref{inversechecktran2}).
\end{proof}

\begin{theorem}
\label{checkmaintheorem}
    For $x,y\in X$ such that $1<\partial(x,y)<k$, we have
    \begin{equation*}
        \begin{aligned}
            \widehat{x\cap y}&=\frac{-1}{q^{k+i}[n-2k]}\widecheck{B_{xy}}+\frac{-[k-i]}{q^k[i-1][n-2k]}\widecheck{C_{xy}},\\
            \widehat{x+y}&=\frac{1}{q^k[n-2k]}\widecheck{B_{xy}}+\frac{[n-k-i]}{q^{k-i}[i-1][n-2k]}\widecheck{C_{xy}},
        \end{aligned}
    \end{equation*}
    where $i=\partial(x,y)$.
\end{theorem}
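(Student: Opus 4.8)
The plan is to read the two claimed identities directly off the transition matrix (\ref{inversechecktran2}) of Theorem \ref{checktrans2}, exactly as was done for Theorems \ref{maintheorem} and \ref{barmaintheorem}. By Theorem \ref{checktrans2}, the matrix (\ref{inversechecktran2}) is the transition matrix from the combinatorial basis $\widecheck{B_{xy}},\ \widecheck{C_{xy}}$ to the geometric basis $\widehat{x\cap y},\ \widehat{x+y}$ for $\Fix(x\cap y,x+y)$. Under the convention of \cite[p.~352]{Ter1} for transition matrices, the $j$-th column of this matrix lists the coordinates of the $j$-th geometric basis vector with respect to the combinatorial basis, so the theorem is just an expansion of those columns.

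Concretely, I would first invoke Theorems \ref{1checkbasis}, \ref{2checkbasis}, and \ref{checktrans2} to record that $\{\widehat{x\cap y},\widehat{x+y}\}$ and $\{\widecheck{B_{xy}},\widecheck{C_{xy}}\}$ are both bases of $\Fix(x\cap y,x+y)$ and that (\ref{inversechecktran2}) is the transition matrix between them in the stated direction. Then I would simply expand columns: the first column of (\ref{inversechecktran2}) yields
\[
\widehat{x\cap y}=\frac{-1}{q^{k+i}[n-2k]}\widecheck{B_{xy}}+\frac{-[k-i]}{q^k[i-1][n-2k]}\widecheck{C_{xy}},
\]
and the second column yields
\[
\widehat{x+y}=\frac{1}{q^k[n-2k]}\widecheck{B_{xy}}+\frac{[n-k-i]}{q^{k-i}[i-1][n-2k]}\widecheck{C_{xy}},
\]
which are precisely the asserted equations.

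I expect no real obstacle, since the substantive work is already done: Lemma \ref{checkbc} produced the coefficient matrix (\ref{checktran2}), Theorem \ref{2checkbasis} checked that its determinant $-q^{k+i}[i-1][n-2k]$ is nonzero (using $1<i<k$ and $n>2k$), and Theorem \ref{checktrans2} recorded its inverse (\ref{inversechecktran2}). The only point that needs a moment's care is orienting the row/column convention so that the direction ``combinatorial $\to$ geometric'' is read off as columns rather than rows; an alternative that sidesteps this entirely is to bypass Theorem \ref{checktrans2} and invert the $2\times2$ matrix (\ref{checktran2}) of Lemma \ref{checkbc} by hand, which is a one-line determinant computation. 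Either route completes the proof.
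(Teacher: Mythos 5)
Your proposal is correct and matches the paper's own proof, which simply reads the two identities off the columns of the transition matrix (\ref{inversechecktran2}) from Theorem \ref{checktrans2}. No issues.
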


\begin{proof}
    Routine from the matrix (\ref{inversechecktran2}).
\end{proof}

Recall from Corollary \ref{13.1cor} that the subspace $\Fix(x\cap y,x+y)$ is contained in $\overline{\Fix}(x,y)$. Let $\Fix(x\cap y,x+y)^{\perp}$ denote the orthogonal complement of $\Fix(x\cap y, x+y)$ in $\overline{\Fix}(x,y)$. Our next goal is to find a basis for $\Fix(x\cap y,x+y)^{\perp}$. We will express this basis in terms of the geometric basis for $\overline{\Fix}(x,y)$ and also the combinatorial basis for $\overline{\Fix}(x,y)$.

\begin{lemma}
\label{dim1}
    For $x,y\in X$ such that $1<\partial(x,y)<k$, the subspace $\Fix(x\cap y,x+y)^{\perp}$ has dimension $1$.
\end{lemma}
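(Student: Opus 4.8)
The plan is to compute both dimensions directly. By Corollary \ref{13.1cor}, $\Fix(x\cap y, x+y)\subseteq \overline{\Fix}(x,y)$, and the dimension of the orthogonal complement of $\Fix(x\cap y,x+y)$ in $\overline{\Fix}(x,y)$ is $\dim \overline{\Fix}(x,y)-\dim \Fix(x\cap y,x+y)$. So it suffices to show that $\dim \overline{\Fix}(x,y)=3$ and $\dim \Fix(x\cap y,x+y)=2$.

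First I would recall that by Lemma \ref{sigmalem}(ii) (equivalently Definition \ref{defbarfix} and the remark after it), the vectors in (\ref{barbasis1}), namely $\widehat{x}+\widehat{y}$, $\widehat{x\cap y}$, $\widehat{x+y}$, form a basis for $\overline{\Fix}(x,y)$, so $\dim \overline{\Fix}(x,y)=3$. Next, by Theorem \ref{1checkbasis}, the vectors $\widehat{x\cap y}$, $\widehat{x+y}$ in (\ref{checkbasis1}) form a basis for $\Fix(x\cap y,x+y)$, so $\dim \Fix(x\cap y,x+y)=2$. Subtracting gives $\dim \Fix(x\cap y,x+y)^{\perp}=3-2=1$, as claimed.

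There is essentially no obstacle here: the statement is a bookkeeping consequence of the two explicit bases already established. The only point requiring a word of care is that we are taking the orthogonal complement inside $\overline{\Fix}(x,y)$ rather than inside all of $E$, so the dimension count uses $\dim \overline{\Fix}(x,y)$ and not $\dim E$; since $\Fix(x\cap y,x+y)$ is a subspace of $\overline{\Fix}(x,y)$ by Corollary \ref{13.1cor}, the orthogonal complement is well defined and the rank-nullity count applies. The proof is therefore short: cite Lemma \ref{sigmalem}(ii) for the first dimension, Theorem \ref{1checkbasis} for the second, and conclude.

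\begin{proof}
By Lemma \ref{sigmalem}(ii) the vectors in (\ref{barbasis1}) form a basis for $\overline{\Fix}(x,y)$, so $\dim \overline{\Fix}(x,y)=3$. By Theorem \ref{1checkbasis} the vectors in (\ref{checkbasis1}) form a basis for $\Fix(x\cap y,x+y)$, so $\dim \Fix(x\cap y,x+y)=2$. By Corollary \ref{13.1cor}, $\Fix(x\cap y,x+y)$ is a subspace of $\overline{\Fix}(x,y)$, so
\begin{equation*}
    \dim \Fix(x\cap y,x+y)^{\perp}=\dim \overline{\Fix}(x,y)-\dim \Fix(x\cap y,x+y)=3-2=1.
\end{equation*}
The result follows.
\end{proof}
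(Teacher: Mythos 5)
Your proof is correct and matches the paper's own argument exactly: both cite Lemma \ref{sigmalem}(ii) for $\dim\overline{\Fix}(x,y)=3$ and Theorem \ref{1checkbasis} for $\dim\Fix(x\cap y,x+y)=2$, then subtract. The extra remark about taking the complement inside $\overline{\Fix}(x,y)$ rather than $E$ is a reasonable clarification but changes nothing substantive.
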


\begin{proof}
    By Lemma \ref{sigmalem}(ii), the subspace $\overline{\Fix}(x,y)$ has dimension $3$. By Theorem \ref{1checkbasis}, the subspace $\Fix(x\cap y,x+y)$ has dimension $2$. The result follows.
\end{proof}

\begin{lemma}
\label{orth}
For $x,y\in X$ such that $1<\partial(x,y)<k$, the subspace $\Fix(x\cap y,x+y)^{\perp}$ has a basis
\begin{equation}
\label{perp}
    \bigl(q^i+1\bigr)\bigl(\widehat{x}+\widehat{y}\bigr)-2q^i\widehat{x\cap y}-2\widehat{x+y},
\end{equation}
where $i=\partial(x,y)$.
\end{lemma}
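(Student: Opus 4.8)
The plan is to exhibit the displayed vector, call it $w$, verify directly that $w\in\Fix(x\cap y,x+y)^{\perp}$, and then finish by a dimension count. Write
$$w=\bigl(q^i+1\bigr)\bigl(\widehat{x}+\widehat{y}\bigr)-2q^i\widehat{x\cap y}-2\widehat{x+y},\qquad i=\partial(x,y).$$

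First I would note that $w$ is a linear combination of the geometric basis (\ref{barbasis1}) for $\overline{\Fix}(x,y)$, so $w\in\overline{\Fix}(x,y)$; and $w\neq 0$, because the coefficient $q^i+1$ of $\widehat{x}+\widehat{y}$ is nonzero while the vectors in (\ref{barbasis1}) are linearly independent. By Theorem \ref{1checkbasis}, $\Fix(x\cap y,x+y)$ is spanned by $\widehat{x\cap y}$ and $\widehat{x+y}$, so it remains only to check that $w$ is orthogonal to each of these two vectors.

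For this I would read off the needed inner products from Theorem \ref{innprodmat21}. Using $\langle\widehat{x},\widehat{x\cap y}\rangle=\langle\widehat{y},\widehat{x\cap y}\rangle=q^k[k-i][n-k]$ together with $\Vert\widehat{x\cap y}\Vert^2=q^{k-i}[k-i][n-k+i]$ and $\langle\widehat{x\cap y},\widehat{x+y}\rangle=q^{k+i}[k-i][n-k-i]$, the equation $\langle w,\widehat{x\cap y}\rangle=0$ becomes, after dividing through by the nonzero factor $q^{k-i}[k-i]$ (here $[k-i]\neq 0$ since $i<k$),
$$\bigl(q^i+1\bigr)[n-k]=[n-k+i]+q^i[n-k-i].$$
Likewise, using $\langle\widehat{x},\widehat{x+y}\rangle=\langle\widehat{y},\widehat{x+y}\rangle=q^{k+i}[k][n-k-i]$ and $\Vert\widehat{x+y}\Vert^2=q^{k+i}[k+i][n-k-i]$, the equation $\langle w,\widehat{x+y}\rangle=0$ becomes, after dividing through by the nonzero factor $q^{k+i}[n-k-i]$ (here $n-k-i>n-2k>0$),
$$\bigl(q^i+1\bigr)[k]=[k+i]+q^i[k-i].$$
Both of these are instances of the elementary identity $(q^a+1)[b]=[a+b]+q^a[b-a]$, which follows immediately from $[m]=(q^m-1)/(q-1)$ upon expanding $(q^a+1)(q^b-1)=(q^{a+b}-1)+q^a(q^{b-a}-1)$; take $a=i$ with $b=n-k$ and $b=k$ respectively. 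Hence $w$ is orthogonal to $\widehat{x\cap y}$ and to $\widehat{x+y}$, so $w\in\Fix(x\cap y,x+y)^{\perp}$.

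Finally, by Lemma \ref{dim1} the subspace $\Fix(x\cap y,x+y)^{\perp}$ has dimension $1$, and $w$ is a nonzero element of it, so $\{w\}$ is a basis, as claimed. There is no real obstacle in this proof; the only point needing a little care is checking that the quantities one divides by are nonzero, which uses just $1<i<k<n-k$. One could equally well derive $w$ from scratch, by writing an unknown element of $\overline{\Fix}(x,y)$ in the geometric basis (\ref{barbasis1}) and solving the $2\times 3$ homogeneous linear system imposed by orthogonality to $\widehat{x\cap y}$ and $\widehat{x+y}$; the verification above is simply the cleaner packaging of that computation.
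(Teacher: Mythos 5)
Your proof is correct and follows the same route as the paper: verify that the displayed vector lies in $\overline{\Fix}(x,y)$, is nonzero by the linear independence of the geometric basis (\ref{barbasis1}), is orthogonal to $\widehat{x\cap y}$ and $\widehat{x+y}$ via the inner products in Theorem \ref{innprodmat21}, and then conclude by the dimension count of Lemma \ref{dim1}; you have merely written out the computation the paper calls routine. (One trivial bookkeeping slip: the common factor you should divide out in the first orthogonality check is $2q^{k}[k-i]$ rather than $q^{k-i}[k-i]$, but the resulting identity and the argument are unaffected.)
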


\begin{proof}
Using the table in Theorem \ref{innprodmat21} one routinely verifies that the vector (\ref{perp}) is contained in $\Fix(x\cap y,x+y)^{\perp}$. The three vectors in (\ref{barbasis1}) are linearly independent, so the vector (\ref{perp}) is nonzero. The result follows from Lemma \ref{dim1}.
\end{proof}

\begin{lemma}
    \label{orthcom}
    Referring to Lemma \ref{orth}, the vector {\rm{(\ref{perp})}} is equal to
\begin{equation*}
        \frac{1}{[i-1]}\Bigl(\bigl(q^{i-1}+1\bigr)[i]\bigl(\widehat{x}+\widehat{y}\bigr)-2\overline{C_{xy}}\Bigr),
    \end{equation*}
where $i=\partial(x,y)$.
\end{lemma}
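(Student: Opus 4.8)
The plan is to verify the claimed identity by direct substitution, using the explicit expression for $\overline{C_{xy}}$ in the geometric basis for $\overline{\Fix}(x,y)$ provided by Lemma \ref{barbc}. Recall from that lemma that
\begin{equation*}
    \overline{C_{xy}}=q^{i-1}\bigl(\widehat{x}+\widehat{y}\bigr)+q^{i}[i-1]\widehat{x\cap y}+[i-1]\widehat{x+y},
\end{equation*}
where $i=\partial(x,y)$. First I would substitute this into the expression $\tfrac{1}{[i-1]}\bigl((q^{i-1}+1)[i](\widehat{x}+\widehat{y})-2\overline{C_{xy}}\bigr)$ and collect the coefficients of the three basis vectors $\widehat{x}+\widehat{y}$, $\widehat{x\cap y}$, $\widehat{x+y}$. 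The coefficients of $\widehat{x\cap y}$ and $\widehat{x+y}$ come out immediately as $-2q^i$ and $-2$ respectively, matching the corresponding coefficients in (\ref{perp}). The coefficient of $\widehat{x}+\widehat{y}$ is $\tfrac{(q^{i-1}+1)[i]-2q^{i-1}}{[i-1]}$.

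The remaining step is the scalar identity
\begin{equation*}
    \bigl(q^{i-1}+1\bigr)[i]-2q^{i-1}=\bigl(q^{i}+1\bigr)[i-1],
\end{equation*}
which would complete the proof since its right-hand side divided by $[i-1]$ gives exactly the coefficient $q^i+1$ appearing in (\ref{perp}). I would check this by clearing denominators via $[m]=(q^m-1)/(q-1)$: multiplying both sides by $q-1$ reduces it to the polynomial identity $(q^{i-1}+1)(q^i-1)-2q^{i-1}(q-1)=(q^i+1)(q^{i-1}-1)$, and expanding both sides shows each equals $q^{2i-1}+q^{i-1}-q^i-1$. Note $[i-1]\neq 0$ here because $1<i<k$, so the division is legitimate.

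The entire argument is routine algebra; there is no real obstacle. The only point requiring a moment of care is making sure the coefficient of $\widehat{x}+\widehat{y}$ simplifies correctly, which is precisely the scalar identity above. Since Lemma \ref{barbc} already expresses $\overline{C_{xy}}$ in the geometric basis, and Lemma \ref{orth} expresses (\ref{perp}) in that same basis, one could alternatively phrase the proof as a two-line comparison of coefficients followed by the one-line verification of the scalar identity.

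\begin{proof}
Substitute the expression for $\overline{C_{xy}}$ from Lemma \ref{barbc} into
\begin{equation*}
    \frac{1}{[i-1]}\Bigl(\bigl(q^{i-1}+1\bigr)[i]\bigl(\widehat{x}+\widehat{y}\bigr)-2\overline{C_{xy}}\Bigr)
\end{equation*}
and collect coefficients in the geometric basis $\widehat{x}+\widehat{y}$, $\widehat{x\cap y}$, $\widehat{x+y}$ for $\overline{\Fix}(x,y)$. The coefficients of $\widehat{x\cap y}$ and $\widehat{x+y}$ are $-2q^i$ and $-2$, as in (\ref{perp}). The coefficient of $\widehat{x}+\widehat{y}$ is $\bigl((q^{i-1}+1)[i]-2q^{i-1}\bigr)/[i-1]$; here $[i-1]\neq 0$ since $1<i<k$. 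Using $[m]=(q^m-1)/(q-1)$, one checks that
\begin{equation*}
    \bigl(q^{i-1}+1\bigr)[i]-2q^{i-1}=\bigl(q^{i}+1\bigr)[i-1],
\end{equation*}
since multiplying both sides by $q-1$ gives $q^{2i-1}+q^{i-1}-q^i-1$ on each side. Hence the coefficient of $\widehat{x}+\widehat{y}$ equals $q^i+1$, and the displayed vector equals (\ref{perp}).
\end{proof}
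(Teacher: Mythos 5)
Your proof is correct and is essentially the paper's argument run in the opposite direction: the paper applies the transition matrix (\ref{inversebartran2}) to convert (\ref{perp}) into the combinatorial basis, while you substitute the geometric-basis expansion of $\overline{C_{xy}}$ from Lemma \ref{barbc} (equivalently, the matrix (\ref{bartran2})) and verify that the claimed expression reduces to (\ref{perp}); the two computations are inverse to each other and rest on the same scalar identity $(q^{i-1}+1)[i]-2q^{i-1}=(q^i+1)[i-1]$, which you check correctly.
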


\begin{proof}
    Apply the transition matrix (\ref{inversebartran2}) to the vector (\ref{perp}).
\end{proof}

\section{More comments about the uniqueness problem for $\Gamma$}
In the introduction, we mentioned the uniqueness problem for the Grassmann graph. In this section, we comment on how
this problem relates to our main results.

Consider the Grassmann graph $\Gamma=J_q(n,k)$ with $n>2k\geq 6$. Let $\Gamma'$ denote a distance-regular graph that has the same intersection numbers as $\Gamma$. Note that $\Gamma'$ and $\Gamma$ have the same eigenvalues. Recall the vertex set $X$ for $\Gamma$, and let $X'$ denote the vertex set for $\Gamma'$. 

Let $E'$ denote a Euclidean space of dimension $[n]-1$. By \cite[Lecture~13]{TerCoursenote}, there exists a Euclidean
representation $(E',\rho)$ of $\Gamma'$ associated with the eigenvalue $\theta_1$. We normalize this representation such that 
\begin{equation*}
    \bigl\Vert \rho(x')\bigr\Vert^2 = q^k[k][n-k]
\end{equation*}
for all $x'\in X'$.

Let us attempt to recover the projective geometry $P$
from $\Gamma'$. To motivate things, we first consider $\Gamma$. For $x,y\in X$ such that $1<\partial(x,y)<k$, we defined the sets $\mathcal{B}_{xy},\mathcal{C}_{xy}$ in Definition \ref{calbcdef}. In Definition \ref{bcdef}, we used $\mathcal{B}_{xy}, \mathcal{C}_{xy}$ to define the vectors $B_{xy}, C_{xy}$ in $E$. In Theorem \ref{maintheorem}, we wrote each vector $\widehat{x\cap y}, \widehat{x+y}$ as a linear combination of
\begin{equation*}
    \widehat{x},\qquad \qquad \widehat{y},\qquad \qquad B_{xy},\qquad \qquad  C_{xy}.
\end{equation*}

We now talk about the graph $\Gamma'$; in the Euclidean space $E'$, let us attempt to mimic the vectors $\widehat{x\cap y}, \widehat{x+y}$. Let $x',y'\in X'$ satisfy $\partial\bigl(x',y'\bigr)=\partial(x,y)$. Define the sets $\mathcal{B}_{x'y'}, \mathcal{C}_{x'y'}$ similarly to Definition \ref{calbcdef}. Define the vectors $B_{x'y'}, C_{x'y'}$ similarly to Definition \ref{bcdef}. For notational convenience, write $i= \partial(x',y')$.

Motivated by the first equation in Theorem \ref{maintheorem},
we bring in the vector
\begin{equation}
\label{rhocap}
    \frac{[k-i][n-k-1]}{q^{k-1}[n-2k]}\rho\bigl(x'\bigr)+\frac{[k-i]}{q^{k-i+1}[i-1][n-2k]}\rho\bigl(y'\bigr)+\frac{-1}{q^{k+i}[n-2k]}B_{x'y'}+\frac{-[k-i]}{q^k[i-1][n-2k]}C_{x'y'}.
\end{equation}

This is the vector in $E'$ that will mimic $\widehat{x\cap y}$. For motivational purposes, we denote the vector (\ref{rhocap}) by $\rho\bigl(x'\cap y'\bigr)$.

Motivated by the second equation in Theorem \ref{maintheorem}, we bring in the vector 
\begin{equation}
\label{rhoplus}
    \frac{-[k-1][n-k-i]}{q^{k-i-1}[n-2k]}\rho\bigl(x'\bigr)+\frac{-[n-k-i]}{q^{k-2i+1}[i-1][n-2k]}\rho\bigl(y'\bigr)+\frac{1}{q^k[n-2k]}B_{x'y'}+\frac{[n-k-i]}{q^{k-i}[i-1][n-2k]}C_{x'y'}.
\end{equation}
This is the vector in $E'$ that will mimic $\widehat{x+y}$. For motivational purposes, we denote the vector (\ref{rhoplus}) by $\rho\bigl(x'+y'\bigr)$. 

\begin{problem}
\label{problem1}
    Try to show that for all $z'\in X'$, the inner product
    \begin{equation}
    \label{innxyz}
        \Bigl<\rho\bigl(x'\cap y'\bigr),\rho\bigl(z'\bigr)\Bigr> 
    \end{equation}
    is equal to one of the values 
    \begin{equation}
    \label{kil}
        [n][k-i-\ell]-[k-i][k]\qquad \qquad (0\leq \ell\leq k-i).
    \end{equation}
    By Lemma $\ref{introlem1}$, this will happen if $\Gamma'$ is isomorphic to $\Gamma$.
\end{problem}

\begin{problem}
    We define a binary relation on $X'$ called partner. For $z',w'\in X'$, we say that $z',w'$ are partners whenever
    \begin{equation*}
    \Bigl<\rho\bigl(x'\cap y'\bigr),\rho\bigl(z'\bigr)\Bigr>=\Bigl<\rho\bigl(x'\cap y'\bigr),\rho\bigl(w'\bigr)\Bigr>.
    \end{equation*}
    By construction, partner is an equivalence relation. Try to show that the partner equivalence classes form an equitable partition of $X'$. If $\Gamma'$ is isomorphic to $\Gamma$ then this will happen because the partner equivalence classes are the orbits of $\Stab\bigl(x'\cap y'\bigr)$.
\end{problem}

\begin{problem}
    Consider the set of vertices $z'\in X'$ such that the inner product (\ref{innxyz}) is equal to (\ref{kil}) with $\ell=0$. Try to show that the subgraph of $\Gamma'$ induced on this set, is geodesically closed and has diameter $i$. If $\Gamma'$ is isomorphic to $\Gamma$ then this will happen because the set will consist of the vertices in $X'$ that contain $x'\cap y'$.
\end{problem}

\section{Appendix}
In this section we derive some linear algebra facts that are used in the main body of the paper.

\begin{lemma}
\label{sum0basis}
Let $W$ denote a vector space of finite positive dimension $D$. Suppose the vectors $\{\mu_i\}_{i=0}^{D}$ span $W$ and sum to $0$. Then any $D$ vectors from $\{\mu_i\}_{i=0}^{D}$ form a basis for $W$.
\end{lemma}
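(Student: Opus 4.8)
The plan is to reduce the statement to the standard fact that, in a vector space of finite dimension $D$, any spanning set of cardinality $D$ is automatically a basis. So the only real content is to check that deleting any single one of the $D+1$ vectors $\{\mu_i\}_{i=0}^{D}$ preserves the spanning property, and for this the relation $\sum_{i=0}^{D}\mu_i=0$ is exactly what is needed.

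Concretely, I would first fix an index $j$ with $0\le j\le D$ and argue that $\{\mu_i\}_{i=0,\,i\neq j}^{D}$ spans $W$. Rearranging the hypothesis $\sum_{i=0}^{D}\mu_i=0$ gives $\mu_j=-\sum_{i\neq j}\mu_i$, which exhibits $\mu_j$ as a linear combination of the remaining $D$ vectors; hence $\mathrm{Span}\{\mu_i\}_{i\neq j}=\mathrm{Span}\{\mu_i\}_{i=0}^{D}$, and the latter equals $W$ by assumption. Next I would invoke the elementary linear algebra fact (valid because $W$ is finite-dimensional) that a spanning subset of size equal to $\dim W$ must be linearly independent, hence a basis. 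Applying this to the set $\{\mu_i\}_{i\neq j}$, which has $D$ elements and spans $W$, shows it is a basis for $W$. Since $j$ was arbitrary, any $D$ of the vectors $\{\mu_i\}_{i=0}^{D}$ form a basis.

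There is no genuine obstacle here: the argument is a two-line observation plus a citation of a textbook fact. The only point requiring a little care is that the step "spanning set of cardinality $\dim W$ is a basis" really does use finiteness of $D$, which is part of the hypothesis, and that $D>0$ guarantees the set $\{\mu_i\}_{i\neq j}$ is nonempty so the statement is not vacuous.
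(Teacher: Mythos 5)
Your proposal is correct and follows essentially the same route as the paper: remove one vector, use the relation $\sum_{i=0}^{D}\mu_i=0$ to show the remaining $D$ vectors still span, and then invoke the standard fact that a spanning set of cardinality $\dim W$ is a basis. The only cosmetic difference is that you treat an arbitrary index $j$ explicitly, whereas the paper reduces to the case $j=0$ by symmetry.
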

\begin{proof}
It suffices to show that the vectors $\{\mu_i\}_{i=1}^{D}$ form a basis for $W$.

Since the vectors $\{\mu_i\}_{i=0}^{D}$ sum to $0$, $\mu_0$ is in the span of $\{\mu_i\}_{i=1}^{D}$. Hence, 
\begin{equation*}
    \text{Span}\{\mu_i\}_{i=1}^{D}=\text{Span}\{\mu_i\}_{i=0}^{D}=W.
\end{equation*}

The vector space $W$ has dimension $D$, so by linear algebra, the vectors $\{\mu_i\}_{i=1}^{D}$ form a basis for $W$. The result follows.
\end{proof}

\begin{lemma}
\label{sum0}
Let $W$ denote a vector space of finite positive dimension $D$. Suppose the vectors $\{\mu_i\}_{i=0}^{D}$ span $W$ and satisfy
\begin{equation}
\label{sum0v}
    \sum_{i=0}^{D}\mu_{i}=0.
\end{equation}
Suppose the vectors $\{\nu_i\}_{i=0}^{D}$ span $W$. Then the following are equivalent.
\begin{enumerate}[label={\rm{(\roman*)}}]
    \item There exists $\sigma\in GL(W)$ such that 
\begin{equation}
\label{vtow}
    \sigma(\mu_i)=\nu_i, \qquad \qquad 0\leq i\leq D.
\end{equation}

    \item The following sum holds:
\begin{equation}
\label{sum0w}
\sum_{i=0}^{D}\nu_i=0.
\end{equation}
\end{enumerate}

Moreover, if {\rm{(i)}} and {\rm{(ii)}} hold, then the map $\sigma$ is unique.
\end{lemma}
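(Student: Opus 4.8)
The plan is to use Lemma \ref{sum0basis} to convert the problem into a statement about bases, which makes both implications essentially immediate. Since the vectors $\{\mu_i\}_{i=0}^{D}$ span $W$ and sum to $0$, Lemma \ref{sum0basis} gives that $\{\mu_i\}_{i=1}^{D}$ is a basis for $W$; this basis is the engine of the argument. (The same reasoning, applied once hypothesis (ii) is available, will show $\{\nu_i\}_{i=1}^{D}$ spans $W$.)

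For (i)$\Rightarrow$(ii): apply $\sigma$ to each side of (\ref{sum0v}) and use linearity, so that $\sum_{i=0}^{D}\nu_i=\sum_{i=0}^{D}\sigma(\mu_i)=\sigma\bigl(\sum_{i=0}^{D}\mu_i\bigr)=\sigma(0)=0$, which is (\ref{sum0w}).

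For (ii)$\Rightarrow$(i): since $\{\mu_i\}_{i=1}^{D}$ is a basis, there is a unique linear map $\sigma\colon W\to W$ with $\sigma(\mu_i)=\nu_i$ for $1\leq i\leq D$. I would then check the two remaining requirements. First, $\sigma(\mu_0)=\nu_0$: from (\ref{sum0v}) we have $\mu_0=-\sum_{i=1}^{D}\mu_i$, so $\sigma(\mu_0)=-\sum_{i=1}^{D}\nu_i$, and by (\ref{sum0w}) this equals $\nu_0$. Second, $\sigma\in GL(W)$: by (\ref{sum0w}) we have $\nu_0\in \text{Span}\{\nu_i\}_{i=1}^{D}$, so $\{\nu_i\}_{i=1}^{D}$ spans $W$ (because the full family $\{\nu_i\}_{i=0}^{D}$ does); hence $\sigma$ is surjective, and a surjective endomorphism of a finite-dimensional space is invertible. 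For the final clause, any $\sigma$ as in (i) is determined by its values on the basis $\{\mu_i\}_{i=1}^{D}$, which are prescribed to be $\nu_1,\dots,\nu_D$; hence $\sigma$ is unique.

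The argument is routine, but the one point I would be careful about is that hypothesis (ii) is used twice in the direction (ii)$\Rightarrow$(i): once to obtain $\sigma(\mu_0)=\nu_0$, and once to guarantee that $\nu_1,\dots,\nu_D$ still span $W$ so that $\sigma$ is invertible. Dropping the spanning conclusion would allow $\sigma$ to fail to lie in $GL(W)$, so that is the step meriting attention.
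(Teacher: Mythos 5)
Your proposal is correct and follows essentially the same route as the paper: both directions use Lemma \ref{sum0basis} to reduce to the bases $\{\mu_i\}_{i=1}^{D}$ and $\{\nu_i\}_{i=1}^{D}$, with the paper citing the lemma a second time for the $\nu_i$ where you instead argue surjectivity directly; the two are interchangeable. The uniqueness argument is likewise the same in substance.
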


\begin{proof} 
\noindent ${\rm (i) }\Rightarrow {\rm (ii)}$ Apply $\sigma$ to each side of (\ref{sum0v}), and evaluate the result using (\ref{vtow}).

\noindent ${\rm (ii) }\Rightarrow {\rm (i)}$ By (\ref{sum0v}) and Lemma \ref{sum0basis}, the vectors $\{\mu_i\}_{i=1}^{D}$ form a basis for $W$. 

By (\ref{sum0w}) and Lemma \ref{sum0basis}, the vectors $\{\nu_i\}_{i=1}^{D}$ form a basis for $W$. By linear algebra, there exists a map $\sigma\in GL(W)$ such that
\begin{equation}
\label{0ton-1}
    \sigma(\mu_i)=\nu_i, \qquad \qquad 1\leq i\leq D.
\end{equation}

Using (\ref{sum0v}), (\ref{sum0w}), (\ref{0ton-1}), we obtain 
\begin{equation*}
    \sigma(\mu_0)=\nu_0.
\end{equation*}

By these comments, (i) holds. We have shown the equivalence of (i) and (ii). 

Assume that (i) and (ii) hold. We now show that the map
$\sigma$ is unique. Let the map $\sigma' \in GL(W)$ satisfy (\ref{vtow}).

Using (\ref{vtow}), we obtain 
\begin{equation*}
    \bigl(\sigma-\sigma'\bigr)(\mu_i)=0, \qquad \qquad 0\leq i\leq D.
\end{equation*}

The vectors $\{\mu_i\}_{i=0}^{D}$ span $W$, so $\sigma-\sigma'=0$. Therefore $\sigma=\sigma'$.
\end{proof}

\section*{Acknowledgement}
The author is currently a graduate student at the University of Wisconsin-Madison. He would like to thank his advisor, Paul Terwilliger, for many valuable ideas and suggestions for this paper. The author would also like to thank the anonymous referees for their helpful comments.

\section*{Declarations}
\subsection*{Conflict of interest}
The author has no relevant financial or non-financial interests to disclose.

Ian Seong\\
Department of Mathematics\\
University of Wisconsin \\
480 Lincoln Drive \\
Madison, WI 53706-1388 USA \\
email: iseong@wisc.edu\\
\end{document}